\RequirePackage{fix-cm}
\documentclass[11pt]{article}

\usepackage[margin=1in]{geometry}
\usepackage[T1]{fontenc}
\usepackage{lmodern}
\usepackage{amsmath,amssymb,amsfonts,amsthm,mathtools}
\usepackage[numbers,sort&compress]{natbib}
\usepackage{xcolor}
\usepackage{todonotes}
\usepackage{graphicx}
\usepackage{comment}
\usepackage{subcaption}
\usepackage{tcolorbox}
\usepackage{multirow}
\usepackage{booktabs}
\usepackage{array}
\usepackage{makecell}
\usepackage{pifont}
\usepackage{adjustbox}
\usepackage{cases}
\usepackage{algorithm}
\usepackage{algpseudocode}
\usepackage{hyperref}
\usepackage[capitalise,nameinlink]{cleveref}

\newcommand{\cahiernumber}{56}  
\newcommand{\cahieryear}{\the\year}   

\usepackage{fancyhdr}
\fancypagestyle{cahier}{%
  \fancyhf{}%
  \fancyhead[R]{\normalsize Cahier du GERAD G-\cahieryear-\cahiernumber}%
  \fancyfoot[C]{\thepage}%
}

\newcommand{\cmark}{\ding{51}} 
\newcommand{\xmark}{\ding{55}} 

\theoremstyle{plain}
\newtheorem{theorem}{Theorem}[section]
\newtheorem{lemma}[theorem]{Lemma}

\newtheorem{proposition}[theorem]{Proposition}
\theoremstyle{definition}
\newtheorem{definition}[theorem]{Definition}

\newtheorem{problemassumption}{Problem Assumption}[section]
\newtheorem{modelassumption}{Model Assumption}[section]

\theoremstyle{remark}

\theoremstyle{definition}
\newtheorem{example}[theorem]{Example}
\crefname{example}{Example}{Examples}
\Crefname{example}{Example}{Examples}
\crefname{remark}{Remark}{Remarks}
\Crefname{remark}{Remark}{Remarks}
\crefname{appendix}{Appendix}{Appendices}
\Crefname{appendix}{Appendix}{Appendices}
\crefname{problemassumption}{Problem Assumption}{Problem Assumptions}
\Crefname{problemassumption}{Problem Assumption}{Problem Assumptions}
\crefname{modelassumption}{Model Assumption}{Model Assumptions}
\Crefname{modelassumption}{Model Assumption}{Model Assumptions}
\crefname{stepassumption}{Step Assumption}{Step Assumptions}
\Crefname{stepassumption}{Step Assumption}{Step Assumptions}
\providecommand{\R}{\mathbb{R}}
\DeclareMathOperator*{\minimize}{minimize}

\newcommand{\mailto}[1]{\href{mailto:#1}{\nolinkurl{#1}}}
\newenvironment{keywords}{\par\medskip\noindent\textbf{Keywords:}}{\par}
\newenvironment{AMS}{\par\noindent\textbf{AMS subject classifications:}}{\par}
\providecommand{\headers}[2]{}

\usepackage[
  createShortEnv,
  conf={one big link},
  commandRef=Cref]{proof-at-the-end}

\NewDocumentEnvironment{proofEE}{O{}+b}{%
  \begin{proofE}[#1]
    #2
    \space
  \end{proofE}
}{}

\usepackage{tikz}
\usepackage{pgfplots}
\usepackage{tikzscale}
\usetikzlibrary{external}
\def\figurespath{figures}
\graphicspath{{\figurespath/}}
\tikzset{external/system call={lualatex \tikzexternalcheckshellescape
      -halt-on-error
      -interaction=batchmode
      -jobname "\image" "\texsource"}}

\makeatletter
\renewcommand{\todo}[2][]{\tikzexternaldisable\@todo[#1]{#2}\tikzexternalenable}
\newcommand*{\defeq}{\mathrel{\vcenter{\baselineskip0.5ex \lineskiplimit0pt
                     \hbox{\scriptsize.}\hbox{\scriptsize.}}}%
                     =}
\makeatother

\newcommand{\kdc}{\kappa_{\textup{mdc}}}

\newcommand{\tl}[1]{L_#1}

\newcommand{\galpha}{g}
\newcommand{\Psialpha}{\Psi}
\newcommand{\psialpha}{\psi}
\newcommand{\halpha}{h}
\providecommand{\N}{\mathbb{N}}

\def\papertitle{Complexity analysis of trust-region methods under \((\alpha,L_0,L_1)\)-smoothness}
\def\authorone{Youssef Diouane}
\def\authortwo{Mohamed L. Habiboullah}
\def\authorthree{Awa Khouna}
\def\authorfour{Dominique Orban}

\hypersetup{
  colorlinks=true, linkcolor=blue, citecolor=blue, urlcolor=blue,
  pdftitle={Complexity analysis of trust-region methods under (alpha,L0,L1)-smoothness},
  pdfauthor={\authorone, \authortwo, \authorthree, and \authorfour},
  pdfsubject={Trust-region methods under the (alpha,L0,L1)-smoothness condition},
  pdfkeywords={Complexity analysis, Quasi-Newton methods, Trust-region methods, Unconstrained optimization, Nonconvex optimization, Convex optimization},
}

\title{\papertitle}

\author{
  \authorone\thanks{%
    GERAD and Department of Mathematics and Industrial Engineering, Polytechnique Montr\'eal.
    E-mail: \mailto{youssef.diouane@polymtl.ca}.
    Research partially supported by an NSERC Discovery Grant.}
  \and
  \authortwo\thanks{%
    GERAD and Department of Mathematics and Industrial Engineering, Polytechnique Montr\'eal.
    E-mail: \mailto{mohamed-laghdaf-2.habiboullah@polymtl.ca}.
    Research partially supported by an FRQNT scholarship.}
  \and
  \authorthree\thanks{%
    CIRRELT and SCALE-AI Chair in Data-Driven Supply Chains, Department of Mathematics and Industrial Engineering, Polytechnique Montr\'eal.
    E-mail: \mailto{awa.khouna@polymtl.ca}.}
  \and
  \authorfour\thanks{%
    GERAD and Department of Mathematics and Industrial Engineering, Polytechnique Montr\'eal.
    E-mail: \mailto{dominique.orban@gerad.ca}.
    Research partially supported by an NSERC Discovery Grant.}
}
\date{\today}

\begin{document}

\maketitle
\thispagestyle{cahier}

\begin{abstract}
  Generalized smoothness assumptions have attracted growing attention in recent years, motivated in part by machine learning problems in which the gradient of the objective function may not be Lipschitz continuous.
  Among the most prominent of these is the $(\alpha, L_0, L_1)$-smoothness assumption.
  Existing methods that attain the best known complexity bounds require knowledge of, or upper bounds on, $\alpha$, $L_0$ and $L_1$, while the few parameter-agnostic methods available do not recover those bounds.
  In this paper, we show that trust-region methods attain the best known bounds for $(\alpha, L_0, L_1)$-smooth objective functions without prior knowledge of these parameters.
  Establishing these results requires new analytical tools, beyond those used in classical trust-region complexity analyses.
  We further show that our working model assumption allows the use of general model Hessian approximations and, in particular, accommodates limited-memory quasi-Newton updates.
  Finally, we show that our complexity bound is sharp in the nonconvex setting.
\end{abstract}

\begin{keywords}
  Complexity analysis; Quasi-Newton methods; Trust-region methods; Generalized smoothness; Unconstrained optimization; Nonconvex optimization; Convex optimization
\end{keywords}

\begin{AMS}
  65K05,
  49M37,
  90C30,  
  90C53  
\end{AMS}

\section{Introduction}%
\label{sec:intro}

We consider the unconstrained optimization problem
\begin{equation}%
  \label{eq:nlp}
  \minimize_{x \in \R^n} \ f(x),
\end{equation}
where \(f:\mathbb R^n\to\mathbb R\) is \(\mathcal C^1\) and bounded below by \(f_{\rm low}\).
We target to solve~\eqref{eq:nlp} by means of a trust-region method~\citep{conn-gould-toint-2000}.
Given a tolerance \(\epsilon>0\), our goal is to compute an \(\epsilon\)-approximate first-order stationary point \(x\) satisfying \(\|\nabla f(x)\|\le\epsilon\).
The iteration complexity of an optimization method is the worst-case number of iterations required to reach such a point, while its evaluation complexity counts the associated objective and derivative evaluations.
In this paper, we study the evaluation complexity of trust-region methods under generalized smoothness assumptions introduced by \citep{zhang2019gradient,chen-zhou-liang-lu-2023}, which we refer to as \((\alpha,L_0,L_1)\)-smoothness.
Specifically, assuming that the Hessian of \(f\) exists, we suppose that there exist constants \(L_0,L_1\ge0\) and \(\alpha\in[0,1]\) such that
\begin{equation}%
  \label{eq:L0L1-smoothness-hessian}
  \|\nabla^2 f(x)\|
  \le
  L_0+L_1\|\nabla f(x)\|^\alpha,
  \qquad
  \forall x\in\mathbb{R}^n.
\end{equation}
The endpoint \(\alpha=1\) is commonly referred to as \((L_0,L_1)\)-smoothness and has been studied extensively \citep{zhang2019gradient,zhang2020improved,koloskova2023revisiting,gorbunov2025methods,vankov2024optimizing,hubler2024parameteragnostic}.

Classical worst-case complexity analyses for smooth optimization usually rely on global Lipschitz continuity of the gradient, a convenient assumption that yields the standard sharp \(\mathcal{O}(L\epsilon^{-2})\) complexity bound for nonconvex \(f\), where \(L\) is the Lipschitz constant of the gradient \citep{cartis-gould-toint-2022,grapiglia-2014}.
However, \(L\) may be large in practice, which can lead to a pessimistic bound.
Condition~\eqref{eq:L0L1-smoothness-hessian} generalizes the classical setting, since \(\nabla f\) is \(L\)-Lipschitz if \(f\) is, e.g., \((0,0,L)\)-smooth.
In the training of long short-term memory networks, \(L_0\) is observed to be much smaller than \(L\) \citep{zhang2019gradient,zhang2020improved}, and other machine learning problems exhibit the same behavior \citep{bodard2026escaping}.
These observations have motivated the study of complexity bounds under \((\alpha,L_0,L_1)\)-smoothness.

\subsection*{Contributions}
This paper makes the following contributions:

\begin{itemize}
  \item We develop a complexity analysis of trust-region methods under \((\alpha ,L_0, L_1)\)-smoothness for nonconvex, convex, and strongly convex objectives.
    Unlike the classical analysis, our approach requires new mathematical tools to recover the best-known complexity bounds.
  \item We recover the best known complexity bounds in the nonconvex, convex, and strongly convex settings, matching the state of the art without requiring prior knowledge of \((\alpha ,L_0, L_1)\), except for those of accelerated methods in the convex case.
  \item We introduce a general class of \((\alpha,L_0,L_1)\)-smooth functions, which we call polynomial inverse problems and which includes phase retrieval as a special case.
    To the best of our knowledge, the literature contains a single explicit example for \(\alpha\in(0,1)\), namely phase retrieval itself.
  \item We show how limited-memory quasi-Newton (L-BFGS, L-SR1 and L-PSB) fit within our framework.
  \item We show that the \(\epsilon^{\alpha-2}\) complexity bound is sharp for \((\alpha,0,L_1)\)-smooth nonconvex objectives.
\end{itemize}

\subsection*{Related research}

The \((1,L_0,L_1)\)-smoothness condition was introduced by \citet{zhang2019gradient} to explain the empirical effectiveness of gradient clipping when the local smoothness varies significantly.
They showed that clipping attains the best known rate, whereas fixed-step gradient descent requires the additional assumption that the gradient be uniformly bounded, and pays for it with a constant that degrades with that bound (\Cref{tab:L0L1-complexity}).
\citet{zhang2020improved} extended the condition to merely continuously differentiable functions, and \citet{koloskova2023revisiting} allowed an arbitrary clipping radius while adding convex and strongly convex guarantees, at the cost of an additional Lipschitz assumption on the gradient.
\citet{gorbunov2025methods} sharpened the strongly convex bound and combined clipping with acceleration in the convex case, at the price of a factor exponential in \(L_1\), which \citet{vankov2024optimizing} subsequently removed while identifying stepsizes that attain the best known rates for clipped, normalized and accelerated gradient methods (\Cref{tab:L0L1-complexity}).
In every one of these methods, the stepsize or the clipping radius is prescribed from \(L_0\) and \(L_1\).

The general condition~\eqref{eq:L0L1-smoothness-hessian} for \(\alpha\in(0,1)\) was introduced by \citet{chen-zhou-liang-lu-2023}, whose normalized gradient descent is optimal at \(\alpha=1\), but whose bound for \(\alpha<1\) carries a factor that tends to infinity as \(\alpha\uparrow1\) (\Cref{tab:L0L1-complexity-alpha}).
\citet{li2023convex} works under a condition that strictly generalizes~\eqref{eq:L0L1-smoothness-hessian}, at the cost of rates that remain suboptimal even at \(\alpha=1\), while \citet{tyurin2025unifiedtheorygradientdescent} attains the best known nonconvex and convex rates for \(\alpha\le1\) through an integral-based stepsize rule, with no guarantee in the strongly convex case (\Cref{tab:L0L1-complexity-alpha}).
Both require knowledge of \(\alpha\), \(L_0\) and \(L_1\).

Hessian approximations have also been considered: the Gradient-Normalized Smoothness framework of \citet{semenov2025gradientnormalized} attains the best known nonconvex and convex rates for \((1,L_0,L_1)\)-smooth problems, but still selects its stepsize by grid search or adaptive tuning over \((L_0,L_1)\).

A few works dispense with prior knowledge of \(L_0\) and \(L_1\), at the cost of either a worse rate or another requirement.
In the nonconvex case, and only for \(\alpha=1\), the backtracking gradient method of \citet{hubler2024parameteragnostic} is not optimal: its rate involves a uniform bound on the gradient norm established in the analysis (\Cref{tab:L0L1-complexity}).
Recently, \citet{jerad2026fast} proposed a modified variant of the adaptive regularization method AR\(p\), in which the gradient is evaluated at every trial point and only exact derivatives are allowed.
Similarly to the trust-region method, their framework does not require prior knowledge of \(L_0\) or \(L_1\), but yields a worse complexity bound than ours when \(p=1\)~(\Cref{tab:L0L1-complexity}).
In the convex case, \citet{vankov2024optimizing} also analyze two methods that retain the best known rate without \(L_0\) and \(L_1\), but substitute another quantity for them: a Polyak stepsize, which requires a lower bound on the optimal value, and a normalized gradient method, which requires an estimate of \(R_0=\|x_0-x^\star\|\).
\citet{takezawa2024polyak} propose an inexact Polyak stepsize that requires no such quantity, but their rate deteriorates from the optimal convex one to the nonconvex one, with an additional term measuring the quality of the lower bound used (\Cref{tab:L0L1-complexity}).
Finally, the convex bound of \citet{jerad2026fast} also contains an additional logarithmic factor in \(\epsilon^{-1}\) (\Cref{tab:L0L1-complexity}).
The best known rates are thus attained only by substituting some other problem-dependent quantity for \(L_0\) and \(L_1\); attaining them with none at all, simultaneously in the nonconvex, convex and strongly convex settings, is our main contribution.

We summarize the best known complexity bounds in \Cref{tab:L0L1-complexity,tab:L0L1-complexity-alpha}.

\begin{table}[ht]%
  \centering
  \caption{
  Iteration complexity under \((1,L_0,L_1)\)-smoothness.
  Here \(\delta_0 = f(x_0)-f_{\rm low}\), \(R_0=\|x_0-x^\star\|\), where \(x^\star\) is a minimizer of \(f\), \(R \ge R_0\) is such that \(\|x - x^\star\| \le R\) for all \(x\) with \(f(x) \le f(x_0)\) (see Assumption~\ref{asm:convex}), \(M\) is an upper bound on the gradient norm, \(L\) is the Lipschitz constant of the gradient, \(\mu>0\) denotes the strong convexity constant, \(\sigma^2=f_{\rm low}-l^\star\ge0\), where \(l^\star\) is a lower bound on the optimal value \(f_{\rm low}\) and \(\epsilon>0\) is the desired accuracy.}%
  \label{tab:L0L1-complexity}
  \renewcommand{\arraystretch}{1.25}
  \setlength{\tabcolsep}{4pt}
  \scriptsize
  \begin{adjustbox}{max width=\textwidth}%
    \begin{tabular}{@{} l p{3.2cm} c c c c @{\hspace{30pt}} p{5.2cm} @{}}%
      \toprule
      \multirow{2}{*}{Class}
             &
      \multirow{2}{*}{Method}
             &
      \multicolumn{4}{c}{\hspace{-29pt}Parameters required}
             &
      \multirow{2}{*}{Complexity bound}
      \\
      \cmidrule(l{2pt}r{25pt}){3-6}
             &
             &
      \(L_0\)
             &
      \(L_1\)
             &
      \(f_{\rm low}\)
             &
      \(R_0\)
             &
      \\
      \midrule

      \multirow{4}{*}{\makecell[l]{Non-     \\
          convex}}
             &
      \citep[Theorem 6]{zhang2019gradient}
             &
      \cmark & \cmark & \xmark & \xmark
             &
      \(\displaystyle
      \delta_0
      \left(
      L_0 + L_1 M
      \right)
      \epsilon^{-2}
      \)
      \\
             &
      \citep[Theorem 3.1]{zhang2020improved}
             &
      \cmark & \cmark & \xmark & \xmark
             &
      \(\displaystyle
      \delta_0
      \left(
      L_0\epsilon^{-2}
      +
      L_1^2/L_0
      \right)
      \)
      \\
             &
      \citep[Theorem 3.1]{koloskova2023revisiting}, \citep[Theorem 2.1]{vankov2024optimizing}
             &
      \cmark & \cmark & \xmark & \xmark
             &
      \(\displaystyle
      \delta_0
      \left(
      L_0\epsilon^{-2}
      +
      L_1\epsilon^{-1}
      \right)
      \)
      \\

             &
      \citep[Theorem 4]{hubler2024parameteragnostic}
             &
      \xmark & \xmark & \xmark & \xmark
             &
      \(\displaystyle
      \delta_0 (L_0+\delta_0 L_1^2)\epsilon^{-2}
      \)
      \\
             &
      \citep[Theorem 3.1]{jerad2026fast}
             &
      \xmark & \xmark & \xmark & \xmark
             &
      \(\displaystyle
      \delta_0 (L_0+ L_1) \log(\epsilon^{-1}) \, \epsilon^{-2}
      \)
      \\
             &
      \Cref{thm:complexity:S}
             &
      \xmark & \xmark & \xmark & \xmark
             &
      \(\displaystyle
      \delta_0
      \left(
      L_0\epsilon^{-2}
      +
      L_1\epsilon^{-1}
      \right)
      \)
      \\

      \midrule

      \multirow{6}{*}{Convex}
             &
      \citep[Theorem 2.3]{koloskova2023revisiting}
             &
      \cmark & \cmark & \xmark & \xmark
             &
      \(\displaystyle
      L_0R_0^2\epsilon^{-1}
      +
      L_1R_0^2 \sqrt{L \epsilon^{-1}}
      \)
      \\
             &
      \citep[Theorem 3.3]{gorbunov2025methods}
             &
      \cmark & \cmark & \xmark & \xmark
             &
      \(\displaystyle
      L_0R_0^2\epsilon^{-1}
      +
      (L_1R_0)^2
      \)
      \\
             &
      \citep[Theorem 5.2]{gorbunov2025methods}
             &
      \cmark & \cmark & \xmark & \xmark
             &
      \(\displaystyle \sqrt{
        L_0 R_0^2 (1 + L_1 R_0^2 \exp(L_1 R_0))\epsilon^{-1}}
      \)
      \\
             &
      \citep[Theorem 3.2]{vankov2024optimizing}
             &
      \cmark & \cmark & \xmark & \xmark
             &
      \(\displaystyle
      L_0R_0^2\epsilon^{-1}
      +
      L_1R_0\log(\delta_0/\epsilon)
      \)
      \\
             &
      \citep[Theorem 6.2]{vankov2024optimizing}
             &
      \cmark & \cmark & \xmark & \xmark
             &
      \(\displaystyle \sqrt{
        L_0 R_0^2\epsilon^{-1}} +(L_1 R_0)^{2/3} \log(\delta_0/\epsilon)
      \)
      \\
             &
      \citep[Theorem 4.1]{gorbunov2025methods}, \citep[Theorem 5.1]{vankov2024optimizing}
             &
      \xmark & \xmark & \cmark & \xmark
             &
      \(\displaystyle
      L_0R_0^2\epsilon^{-1}
      +
      (L_1R_0)^2
      \)
      \\

             &
      \citep[Theorem 4.1]{vankov2024optimizing}
             &
      \xmark & \xmark & \xmark & \cmark
             &
      \(\displaystyle
      L_0R_0^2\epsilon^{-1}
      +
      (L_1 R_0)^2
      \)
      \\
             &
      \citep[Theorem 5]{takezawa2024polyak}
             &
      \xmark & \xmark & \xmark & \xmark
             &
      \(\displaystyle
      (L_0R_0^2+\sigma^2)\epsilon^{-2} + (L L_1^2 R_0^4 + L_1^2 L \sigma^4/L_0^2)\epsilon^{-1}
      \)
      \\
             &
      \citep[Theorem 3.2]{jerad2026fast}
             &
      \xmark & \xmark & \xmark & \xmark
             &
      \(\displaystyle
      \delta_0 (L_0+ L_1) \log(\epsilon^{-1}) \, \epsilon^{-1}
      \)
      \\
             &
      \Cref{thm:complexity:S-convex-2}
             &
      \xmark & \xmark & \xmark & \xmark
             &
      \(\displaystyle
      L_0R^2\epsilon^{-1}
      +
      L_1R\log(\delta_0/\epsilon)
      \)
      \\

      \midrule

      \multirow{3}{*}{\makecell[l]{Strongly \\
          convex}}
             &
      \citep[Theorem 2.5]{koloskova2023revisiting}
             &
      \cmark & \cmark & \xmark & \xmark
             &
      \(\displaystyle
      L_0/\mu \log(R_0^2/\epsilon)+ L_0 R_0 \min\{\sqrt{L/\mu}, LR_0\}
      \)
      \\
             &
      \citep[Theorem D.3]{gorbunov2025methods}
             &
      \cmark & \cmark & \xmark & \xmark
             &
      \(\displaystyle
      L_0/\mu \log(R_0^2/\epsilon)+(L_1R_0)^2
      \)
      \\
             &
      \Cref{thm:complexity:S-str-convex-2}
             &
      \xmark & \xmark & \xmark & \xmark
             &
      \(\displaystyle
      L_0/\mu \log(\delta_0/\epsilon)
      +
      L_1 (\delta_0/\mu)^{1/2}
      \)
      \\

      \bottomrule
    \end{tabular}
  \end{adjustbox}
\end{table}

\begin{table}[ht]%
  \centering
  \caption{
    Iteration complexity under \((\alpha,L_0,L_1)\)-smoothness, where \(\alpha \in (0,1)\).
    The conventions are those of \Cref{tab:L0L1-complexity}.}%
  \label{tab:L0L1-complexity-alpha}
  \renewcommand{\arraystretch}{1.25}
  \setlength{\tabcolsep}{4pt}
  \scriptsize
  \begin{adjustbox}{max width=\textwidth}%
    \begin{tabular}{@{} l p{3.2cm} c c c @{\hspace{30pt}} p{5.2cm} @{}}%
      \toprule
      \multirow{2}{*}{Class}
             &
      \multirow{2}{*}{Method}
             &
      \multicolumn{3}{c}{\hspace{-20pt}Parameters required}
             &
      \multirow{2}{*}{Complexity bound}
      \\
      \cmidrule(l{1pt}r{20pt}){3-5}
             &
             &
      \(L_0\)
             &
      \(L_1\)
             &
      \(\alpha\)
             &
      \\
      \midrule

      \multirow{4}{*}{\makecell[l]{Non-     \\
          convex}}
             &
      \citep[Theorem 2]{chen-zhou-liang-lu-2023}
             &
      \cmark & \cmark & \cmark
             &
      \(\displaystyle
      \delta_0(
      L_0(2^{\frac{\alpha^2}{1-\alpha}}+1) \epsilon^{-2}
      +
      L_1\,2^{\frac{\alpha^2}{1-\alpha}}\,3^\alpha
      \,\epsilon^{\alpha - 2})
      \)
      \\
             &
      \citep[Theorem 5.2]{li2023convex}
             &
      \cmark & \cmark & \cmark
             &
      \(\displaystyle
      \delta_0(L_0 + L_1 \|\nabla f(x_0)\|^{\alpha})\epsilon^{-2}
      \)
      \\
             &
      \citep[Theorem 5.1]{tyurin2025unifiedtheorygradientdescent}
             &
      \cmark & \cmark & \cmark
             &
      \(\displaystyle
      \delta_0
      \left(
      L_0\epsilon^{-2}
      +
      L_1\epsilon^{\alpha-2}
      \right)
      \)
      \\
             &
      \Cref{thm:complexity:S}
             &
      \xmark & \xmark & \xmark
             &
      \(\displaystyle
      \delta_0
      \left(
      L_0\epsilon^{-2}
      +
      L_1\epsilon^{\alpha-2}
      \right)
      \)
      \\

      \midrule

      \multirow{3}{*}{Convex}
             &
      \citep[Theorem 4.2]{li2023convex}
             &
      \cmark & \cmark & \cmark
             &
      \(\displaystyle
      (L_0R_0^2 + L_1R_0^2\|\nabla f(x_0)\|^{\alpha})\epsilon^{-1}
      \)
      \\
             &
      \citep[Theorem 8.1]{tyurin2025unifiedtheorygradientdescent}
             &
      \cmark & \cmark & \cmark
             &
      \(\displaystyle L_0 R_0^2\epsilon^{-1} + L_1^{\frac{2}{2 - \alpha}} R_0^2 \epsilon^{\frac{-2(1 - \alpha)}{2 - \alpha}}\)
      \\
             &
      \Cref{thm:complexity:S-convex-2}
             &
      \xmark & \xmark & \xmark
             &
      \(\displaystyle
      L_0 R^2 \epsilon^{-1} + L_1 R^{2-\alpha} (\epsilon^{\alpha-1} - \delta_0^{\alpha-1}) \,/ \,(1-\alpha)
      \)
      \\

      \midrule

      \multirow{2}{*}{\makecell[l]{Strongly \\
          convex}}
             &
      \citep[Theorem 4.3]{li2023convex}
             &
      \cmark & \cmark & \cmark
             &
      \(\displaystyle
      (L_0 + L_1 \|\nabla f(x_0)\|^{\alpha})/\mu \, \log(R_0^2/\epsilon)
      \)
      \\
             &
      \Cref{thm:complexity:S-str-convex-2}
             &
      \xmark & \xmark & \xmark
             &
      \(\displaystyle
      L_0/\mu \log(\delta_0/\epsilon)
      +
      L_1 \mu^{\alpha/2 - 1}(\delta_0^{\alpha/2} - \epsilon^{\alpha/2}) / \alpha
      \)
      \\

      \bottomrule
    \end{tabular}
  \end{adjustbox}
\end{table}

\subsection*{Notation}

For a finite set $\mathcal{A}$, $|\mathcal{A}|$ denotes its cardinality.
For a vector $x$ and matrix $B$, $\|x\|$ and $\|B\|$ denote the Euclidean norm and its induced matrix norm, respectively.
We denote by \(\mathbb S^{n-1}\) the unit sphere in \(\mathbb{R}^n\).
In what follows, \(L\)-smoothness means that \(\nabla f\) is Lipschitz continuous with constant \(L\).
Recall that a polynomial \(P:\mathbb{R}^n\to\mathbb{R}^m\) is said to be homogeneous of degree \(p\) if \(P(tx)=t^{p} P(x)\) for every \(x\in\mathbb{R}^n\) and every \(t\in\mathbb{R}\).
For \(x\in\mathbb R^n\), let \(x^{\otimes p}\) denote the \(p\)-fold tensor product of \(x\), i.e., the order-\(p\) tensor with entries \((x^{\otimes p})_{i_1,\ldots,i_p} \defeq x_{i_1}\cdots x_{i_p}\), where \(i_1,\ldots,i_p\) range over \(\{1,\ldots,n\}\).
Hence, for any order-\(p\) tensor \(A\),
\[
  \langle A,x^{\otimes p}\rangle = \sum_{i_1,\ldots,i_p=1}^n A_{i_1,\ldots,i_p}x_{i_1}\cdots x_{i_p}.
\]
Finally, we use the convention that \(0^0=1\).

\section{Preliminaries}%
\label{sec:preliminaries}

We make the following standard assumptions on the problem.

\begin{problemassumption}%
  \label{asm:problem-obj}
  The objective function \(f\) is continuously differentiable and bounded below.
  We denote its infimum by \(f_{\textup{low}}\).
\end{problemassumption}

We recall the definition of \((\alpha,L_0,L_1)\)-smoothness.
The case \(\alpha=1\)~\eqref{eq:L0L1-smoothness-hessian} was introduced by \citet{zhang2019gradient} for twice continuously differentiable functions; \citet{zhang2020improved} observed that the integral form below is meaningful for merely continuously differentiable functions, and \citet{chen-zhou-liang-lu-2023} extended it to general \(\alpha\in[0,1]\).

\begin{definition}[\citep{zhang2019gradient,zhang2020improved,chen-zhou-liang-lu-2023}]%
  \label{def:smooth-integral}
  Let \(f \colon \R^n \to \R\) be continuously differentiable, \(\alpha \in [0,1]\), and \(L_0 > 0\) or \(L_1 > 0\).
  We say that \(f\) is \emph{\((\alpha,L_0,L_1)\)-smooth} if, for all \(x,y \in \R^n\),
  \begin{equation}
    \label{def-generalized-smooth-integral}
    \| \nabla f(x) - \nabla f(y) \|
    \leq
    (
    L_0 + L_1 \int_{0}^{1} \| \nabla f(x + t(y-x))\|^\alpha \, dt
    )
    \|x - y\|.
  \end{equation}
\end{definition}

\Cref{def:smooth-integral} yields the following equivalent form as proved by \citet[Lemma~\(A.1\)]{chen-zhou-liang-lu-2023}.

\begin{equation}
  \label{def-generalized-smooth}
  \| \nabla f(x) - \nabla f(y) \|
  \leq
  (
  L_0
  +
  L_1 \sup_{u \in [x,y]} \| \nabla f(u) \|^\alpha
  )
  \|x - y\|,
  \qquad \forall x,y \in \R^n.
\end{equation}

We make the following assumption on the objective function \(f\).

\begin{problemassumption}%
  \label{asm:smooth}
  We assume that \(f\) is \((\alpha,L_0,L_1)\)-smooth in the sense of \Cref{def:smooth-integral}, for some \(\alpha \in [0,1]\) and \(L_0,L_1 \geq 0\).
\end{problemassumption}

When \(f \in \mathcal{C}^2\), \Cref{asm:smooth} is equivalent to~\eqref{eq:L0L1-smoothness-hessian}.

\subsection{Examples}

Several classes of functions satisfy \Cref{asm:smooth}.
For instance, a univariate polynomial of degree at least three is not globally \(L\)-smooth, yet it satisfies the \((1,L_0,L_1)\)-smoothness condition~\citep{zhang2019gradient}.
Functions such as \(f(x)=\|x\|^{2n}\), \(f(x)=\exp(a^T x)\), and the logistic loss also satisfy the \((1,L_0,L_1)\)-smoothness condition \citep{gorbunov2025methods}.
A class of machine learning problems is also known to satisfy \((1,L_0,L_1)\)-smoothness, including distributionally robust optimization \citep{chen-zhou-liang-lu-2023}, symmetric and asymmetric matrix factorization \citep{bodard2026escaping}, and the training of neural networks \citep{zhang2019gradient}.
By contrast, to the best of our knowledge, phase retrieval \citep{chen-zhou-liang-lu-2023}, which satisfies \((\tfrac{2}{3},L_0,L_1)\)-smoothness, is the only explicit example previously exhibited for \(0 < \alpha < 1\).

In the following, we introduce a generic class of \((\alpha,L_0,L_1)\)-smooth functions, which we call \emph{polynomial inverse problems}, and which includes phase retrieval \citep{chen-zhou-liang-lu-2023} as a special case.

\begin{theorem}[Polynomial inverse problems]%
  \label{thm:polynomial-inverse-problem}
  Let \(\Phi:\mathbb R^n\rightarrow\mathbb R^m\) be a polynomial map of degree \(p \ge 1\), let \(y\in\mathbb R^m\) be fixed data, and consider \(f(x) = \frac12\|\Phi(x)-y\|^2\).
  Write \(\Phi = \Phi_p+\Phi_{p-1}+\cdots+\Phi_0, \) where $\Phi_k$ is homogeneous of degree $k$.

  Define the leading homogeneous objective
  \(
  f_{2p}(x)
  =
  \frac12\|\Phi_p(x)\|^2.
  \)

  Assume the following noncriticality condition on the unit sphere,
  \[
    \nabla f_{2p}(u)\neq0,
    \qquad
    \forall u\in\mathbb S^{n-1}.
  \]

  Then there exist constants $L_0,L_1>0$ such that
  \begin{equation}
    \label{eq:polynomial-inverse-problem}
    \|\nabla^2f(x)\|
    \le
    L_0+
    L_1
    \|\nabla f(x)\|^\alpha,
    \quad
    \alpha
    =
    \frac{2p-2}{2p-1},
    \quad
    \forall x\in\mathbb R^n.
  \end{equation}
  For \(p=1\), \(\alpha=0\), which is consistent with \(f\) being a convex quadratic, hence \(L\)-smooth.
\end{theorem}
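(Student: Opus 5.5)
The plan is to compare growth rates at infinity, splitting $\mathbb{R}^n$ into a compact ball, where everything is bounded, and its complement, where the leading homogeneous parts dominate.

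\emph{Growth of the Hessian.} Since $\Phi$ has degree $p$, $f$ is a polynomial of degree at most $2p$. Hence $\nabla^2 f$ has entries that are polynomials of degree at most $2p-2$, and there is a constant $C_H$ with
\[
\|\nabla^2 f(x)\|\le C_H(1+\|x\|)^{2p-2}\quad\text{for all } x.
\]

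\emph{Growth of the gradient from below.} Expand $f=f_{2p}+r$, where $r$ collects the terms of degree at most $2p-1$: the cross terms $\langle \Phi_p,\Phi_k-y_k\rangle$ with $k<p$, and the lower-order squares. Then $\nabla f=\nabla f_{2p}+\nabla r$. The function $\nabla f_{2p}$ is homogeneous of degree $2p-1$, and $\nabla r$ has degree at most $2p-2$. The map $u\mapsto\|\nabla f_{2p}(u)\|$ is continuous and, by the noncriticality assumption, nonzero on the compact sphere $\mathbb S^{n-1}$. So $c\defeq\min_{u\in\mathbb S^{n-1}}\|\nabla f_{2p}(u)\|>0$, and homogeneity gives
\[
\|\nabla f_{2p}(x)\|\ge c\|x\|^{2p-1}.
\]
Moreover $\|\nabla r(x)\|\le C_r(1+\|x\|)^{2p-2}$. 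Hence there is $R\ge1$ such that
\[
\|\nabla f(x)\|\ge \tfrac c2\|x\|^{2p-1}\quad\text{for } \|x\|\ge R.
\]
This step is the crux, and the point where the noncriticality hypothesis is used. It needs a little care because $\nabla r$ has degree one less than $\nabla f_{2p}$, so the lower-order terms are dominated only outside a sufficiently large ball.

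\emph{Combining the bounds.} For $\|x\|\ge R$, raise the gradient bound to the power $\alpha=\tfrac{2p-2}{2p-1}$:
\[
\|\nabla f(x)\|^\alpha\ge (c/2)^\alpha\|x\|^{2p-2}.
\]
Since $(1+\|x\|)^{2p-2}\le 2^{2p-2}\|x\|^{2p-2}$ when $\|x\|\ge1$, this yields
\[
\|\nabla^2 f(x)\|\le C_H2^{2p-2}(c/2)^{-\alpha}\|\nabla f(x)\|^\alpha \defeq L_1\|\nabla f(x)\|^\alpha.
\]
On the ball $\|x\|\le R$, continuity gives $\|\nabla^2 f(x)\|\le L_0\defeq\max_{\|x\|\le R}\|\nabla^2 f\|$; we may enlarge $L_0$ and $L_1$ so that both are positive. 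Adding the two cases gives~\eqref{eq:polynomial-inverse-problem} for all $x$.

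\emph{The case $p=1$.} Here $\alpha=0$, and the argument reduces to $\|\nabla^2 f\|$ being constant, which is consistent with $f$ being a convex quadratic.
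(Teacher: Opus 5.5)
Your proof is correct and follows the same route as the paper's. You bound the Hessian polynomially by $C_H(1+\|x\|)^{2p-2}$, and you use noncriticality on the compact sphere together with the homogeneity of $\nabla f_{2p}$ to get $\|\nabla f(x)\|\ge \tfrac{c}{2}\|x\|^{2p-1}$ outside a large ball; the paper does the same, then absorbs the remaining compact set into $L_0$. The differences are cosmetic: you write the Hessian bound as $C_H(1+\|x\|)^{2p-2}$ rather than $C_0+C_1\|x\|^{2p-2}$, and you make the constants explicit instead of relying on the paper's $O(\cdot)$ notation.
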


\begin{proof}
  Since $\Phi$ has degree $p$, $f(x)=\frac12\|\Phi(x)-y\|^2$ is a polynomial of degree $2p$ with leading homogeneous part $f_{2p}(x)=\frac12\|\Phi_p(x)\|^2$.
  Hence,
  \[
    \nabla f_{2p}(x) = O(\|x\|^{2p-1}) \quad\text{and}\quad \nabla^2 f_{2p}(x) = O(\|x\|^{2p-2}),
  \]
  and therefore,
  \[
    \nabla f(x)=\nabla f_{2p}(x)+O(\|x\|^{2p-2})
    \quad\text{and}\quad
    \nabla^2f(x)=\nabla^2f_{2p}(x)+O(\|x\|^{2p-3}),
  \]
  thus there exist \(C_0\), \(C_1>0\) such that
  \begin{equation}%
    \label{eq:hessian-bound-polynomial-inverse-problem}
    \|\nabla^2 f(x)\| \le C_0 + C_1 \|x\|^{2p-2}, \qquad \forall x\in\mathbb R^n.
  \end{equation}
  Moreover, since $f_{2p}$ is homogeneous of degree $2p$,
  \[
    \|\nabla f_{2p}(x)\|
    =
    \|x\|^{2p-1}
    \|
    \nabla f_{2p}\!\left(\frac{x}{\|x\|}\right)
    \|,
  \]
  Since \(\mathbb S^{n-1}\) is compact and \(\nabla f_{2p}\) is continuous, the noncriticality assumption ensures that \(\mu\defeq\min_{u\in\mathbb S^{n-1}}\|\nabla f_{2p}(u)\|>0\), hence \(\|\nabla f_{2p}(x)\|\ge\mu\|x\|^{2p-1}\) for every \(x\).

  Because \(\nabla f(x) = \nabla f_{2p}(x) + O(\|x\|^{2p-2})\), there exist \(C_2\), \(C_3>0\) such that
  \[
    \|\nabla f(x)\|
    \ge
    \|\nabla f_{2p}(x)\| - C_2 \|x\|^{2p-2} - C_3
    \ge
    \mu\|x\|^{2p-1} - C_2 \|x\|^{2p-2} - C_3
    \ge
    \frac{\mu}{2}\|x\|^{2p-1},
  \]
  where the last inequality holds for all sufficiently large \(\|x\|\) outside a compact set \(K\subset\mathbb R^n\).
  Substituting this estimate into the Hessian bound~\eqref{eq:hessian-bound-polynomial-inverse-problem} gives the desired inequality outside \(K\).
  Enlarging \(L_0\), if necessary, extends the inequality to all \(x\in\mathbb R^n\).
\end{proof}


We conclude this section with a few examples of polynomial inverse problems.

\begin{example}%
  Let \(\Phi(x) = \bigl(\langle A_1,x^{\otimes p}\rangle,\ldots, \langle A_m,x^{\otimes p}\rangle \bigr)\), where \(A_1,\ldots,A_m\) are symmetric tensors of order \(p\), and consider \( f(x)= \frac12 \sum_{i=1}^m \bigl( \langle A_i,x^{\otimes p}\rangle-y_i \bigr)^2\).
  The noncriticality assumption holds whenever
  \begin{equation}%
    \label{eq:tensor-noncriticality}
    \sum_{i=1}^m
    \langle A_i,u^{\otimes p}\rangle^2
    >
    0,
    \qquad
    \forall\,u\in\mathbb S^{n-1}.
  \end{equation}
  Then, by \Cref{thm:polynomial-inverse-problem},
  \[
    \|\nabla^2f(x)\|
    \le
    L_0+
    L_1
    \|\nabla f(x)\|^\alpha,
    \qquad
    \alpha=\frac{2p-2}{2p-1}.
  \]
\end{example}

The next two examples are the special cases \(A_i=a_i a_i^T\) which corresponds to phase retrieval \citep{chen-zhou-liang-lu-2023}, and \(A_i=Q_i\) symmetric positive semidefinite matrices, which corresponds to the case \(p=2\).

\begin{example}[Phase retrieval {\normalfont\citep{chen-zhou-liang-lu-2023}}]%
  This corresponds to the tensor sensing problem with \(A_i = a_i^{\otimes 2} = a_i a_i^T\) for some vectors \(a_1,\ldots,a_m \in \mathbb R^n\).
  The noncriticality assumption holds whenever
  \[
    \operatorname{span}\{a_1,\ldots,a_m\}
    =
    \mathbb R^n,
  \]
  Therefore,
  \[
    \|\nabla^2f(x)\|
    \le
    L_0+
    L_1
    \|\nabla f(x)\|^{2/3},
  \]
\end{example}

\begin{example}%
  We now consider the case \(p=2\) with \(A_i=Q_i\) symmetric positive semidefinite matrices, so that \(\Phi(x) = (x^T Q_1 x,\ldots,x^T Q_m x)\).
  The noncriticality assumption holds whenever
  \[
    \bigcap_{i=1}^m
    \ker(Q_i)
    =
    \{0\}.
  \]
  Hence, according to \Cref{thm:polynomial-inverse-problem},
  \[
    \|\nabla^2f(x)\|
    \le
    L_0+
    L_1
    \|\nabla f(x)\|^{2/3}.
  \]
\end{example}

\begin{proof}
  Here \(\langle Q_i,u^{\otimes2}\rangle = u^\top Q_i u\), so~\eqref{eq:tensor-noncriticality} reads \(\sum_{i=1}^m(u^\top Q_i u)^2>0\).
  If this failed for some \(u\in\mathbb S^{n-1}\), then \(u^\top Q_i u=0\) for every \(i\), and \(Q_i\succeq0\) would give \(Q_i u=0\) for every \(i\), that is, \(u\in\bigcap_{i=1}^m\ker(Q_i)=\{0\}\), contradicting \(\|u\|=1\).
\end{proof}

\subsection{Properties of \((\alpha, L_0, L_1)\)-smooth functions}%
\label{sec:properties}

Before presenting the trust-region algorithm, we need the following characterization of \((\alpha, L_0, L_1)\)-smoothness that will be useful in the analysis of the algorithm.
The proof is provided in \Cref{sec:proofs}.

\begin{theoremE}[][end]%
  \label{lem:decrease-L0-L1}
  Let \Cref{asm:problem-obj,asm:smooth} hold.
  Then, for all \(x, s\in\R^n\), such that \(\nabla f(x)\neq0\), we have
  \begin{equation}
    \label{eq:gradient-bound}
    \|\nabla f(x)\|
    \le
    \sqrt{8L_0\bigl(f(x)-f_{\textup{low}}\bigr)}
    +
    \left(
    2^{\alpha+3}L_1\bigl(f(x)-f_{\textup{low}}\bigr)
    \right)^{\frac{1}{2-\alpha}},
  \end{equation}
  \begin{equation}
    \label{eq:gradient-difference}
    \|\nabla f(x + s)-\nabla f(x)\|
    \leq
    \bigl(L_0+L_1\|\nabla f(x)\|^\alpha\bigr)
    \galpha(\|s\|,\|\nabla f(x)\|)\|s\|,
  \end{equation}
  and
  \begin{equation}
    \label{eq:function-difference}
    |f(x + s)-f(x)- \nabla f(x)^T s|
    \leq
    \tfrac12
    \bigl(L_0+L_1\|\nabla f(x)\|^\alpha\bigr)
    \galpha(\|s\|,\|\nabla f(x)\|)\|s\|^2,
  \end{equation}
  where \(g :[0,\infty)\times(0,\infty) \to (0,\infty)\) is defined by
  \begin{equation}
    \label{eq:def-g-alpha}
    \galpha(a,b)
    \defeq
    \begin{cases}
      \left(
      1+
      \dfrac{(1-\alpha)L_1a^\alpha}
        {(L_0+L_1b^\alpha)^{1-\alpha}}
      \right)^{\frac{1}{1-\alpha}},
       &
      \text{if }\alpha\in[0,1) \text{ and }L_0+L_1b^\alpha>0,
      \\
      \exp(L_1a),
       & \text{if }\alpha=1.
    \end{cases}
  \end{equation}
\end{theoremE}

\begin{proofEE}
  We split the proof into two parts.
  \paragraph{Proof of~\eqref{eq:gradient-bound}}
  Let \(\nu\defeq\|\nabla f(x)\|\), \(d\defeq-\nabla f(x)/\nu\), \(q\defeq L_0+2^\alpha L_1\nu^\alpha > 0\) and \(r\defeq\nu/(2q)\).
  First claim is that \(\|\nabla f(x+td)\|\leq2\nu\) for all \(t\in[0,r]\).
  Indeed, if \(t_0\leq r\) were the first time such that \(\|\nabla f(x+t_0d)\|=2\nu\), then~\eqref{def-generalized-smooth}, applied on the segment \([x,x+t_0d]\) on which \(\|\nabla f\|\le2\nu\) by minimality of \(t_0\), gives \(\|\nabla f(x+t_0d)-\nabla f(x)\| \leq (L_0+L_1(2\nu)^\alpha)t_0 \leq q r=\nu/2\), which would imply \(\|\nabla f(x+t_0d)\|\leq3\nu/2\), a contradiction.
  Thus, \(\|\nabla f(x+td)-\nabla f(x)\|\leq q t\) for every \(t\in[0,r]\).
  Since \(\langle\nabla f(x),d\rangle=-\nu\),
  \[
    f(x+rd)-f(x)
    \leq
    \int_0^r(-\nu+q t)\,dt
    =
    -\frac{3\nu^2}{8q}
    \leq
    -\frac{\nu^2}{4q}.
  \]
  As \(f(x+rd)\geq f_{\textup{low}}\), denoting \(\delta_x \defeq f(x)-f_{\textup{low}}\), we obtain \(\nu^2 \leq 4 L_0 \delta_x+ 2^{\alpha+2}L_1\delta_x\,\nu^\alpha\).
  We use the implication \(z^2\le a+bz^\alpha \Rightarrow z\le\sqrt{2a}+(2b)^{1/(2-\alpha)}\), valid for \(z,a,b\ge0\) and \(\alpha\in[0,2)\).
  Indeed, either \(a\ge bz^\alpha\), in which case \(z^2\le2a\) and \(z\le\sqrt{2a}\), or \(a<bz^\alpha\), in which case \(z^2\le2bz^\alpha\), that is, \(z^{2-\alpha}\le2b\) and \(z\le(2b)^{1/(2-\alpha)}\).
  In both cases \(z\) is at most the sum of the two terms.
  Applying it here, we obtain
  \[
    \|\nabla f(x)\|
    \leq
    \sqrt{8L_0\delta_x}
    +
    \bigl(2^{\alpha+3}L_1\delta_x\bigr)^{\frac1{2-\alpha}}.
  \]

  \paragraph{Proof of~\eqref{eq:gradient-difference} and~\eqref{eq:function-difference}}
  For \(\alpha=1\), the result follows from \citep[Lemmas~2.5 and~2.6]{vankov2024optimizing}.
  Assume now that \(\alpha\in[0,1)\).
  If \(s = 0\), the result is trivial.
  For \(t\in[0,1]\), define \(\chi(t) \defeq \|s\|\int_0^t \bigl(L_0+L_1\|\nabla f(x+\tau s)\|^\alpha\bigr)\,d\tau\).
  By \((\alpha,L_0,L_1)\)-smoothness applied to \(x\) and \(x+ts\),
  \begin{align*}
    \|\nabla f(x+ts)-\nabla f(x)\|
     & \leq
    \|ts\|\int_0^1
    \bigl(L_0+L_1\|\nabla f(x+\tau ts)\|^\alpha\bigr)\,d\tau
    \\
     & =
    \|s\|\int_0^t
    \bigl(L_0+L_1\|\nabla f(x+\tau s)\|^\alpha\bigr)\,d\tau
    =
    \chi(t),
  \end{align*}
  where the equality follows from the change of variable \(\tau'\defeq t\tau\).
  Since \(\chi(0)=0\), we have
  \begin{align*}
    \chi'(t)
     & =
    \|s\|
    \bigl(L_0+L_1\|\nabla f(x+ts)\|^\alpha\bigr)
    \leq
    \|s\|
    \bigl(
    L_0+L_1(\|\nabla f(x)\|+\chi(t))^\alpha
    \bigr)
    \\
     & \leq
    \|s\|
    \bigl(
    L_0+L_1\|\nabla f(x)\|^\alpha+L_1\chi(t)^\alpha
    \bigr)
    =
    \lambda+\beta\chi(t)^\alpha,
  \end{align*}
  where \(\lambda\defeq \bigl(L_0+L_1\|\nabla f(x)\|^\alpha\bigr)\|s\|\) and \(\beta\defeq L_1\|s\|\).
  Here, we used \((a+b)^\alpha\leq a^\alpha+b^\alpha\), valid for \(a,b\geq0\) and \(\alpha\in[0,1]\).
  Integrating the preceding inequality gives \(\chi(t)\leq \lambda t+\beta\int_0^t\chi(\tau)^\alpha\,d\tau\).
  For any \(\varrho>0\), define \(\zeta_\varrho(t)\defeq \lambda+\varrho+ \beta\int_0^t\chi(\tau)^\alpha\,d\tau\).
  Since \(t\leq1\), we have \(\chi(t)\leq\zeta_\varrho(t)\), and hence \(\zeta_\varrho'(t) =\beta\chi(t)^\alpha \leq\beta\zeta_\varrho(t)^\alpha\).
  Because \(\zeta_\varrho(t)>0\),
  \[
    \frac{d}{dt}\zeta_\varrho(t)^{1-\alpha}
    =
    (1-\alpha)\zeta_\varrho(t)^{-\alpha}
    \zeta_\varrho'(t)
    \leq
    (1-\alpha)\beta.
  \]
  Integrating from \(0\) to \(1\) yields \(\zeta_\varrho(1)^{1-\alpha} \leq (\lambda+\varrho)^{1-\alpha} +(1-\alpha)\beta\).
  Since \(\chi(1)\leq\zeta_\varrho(1)\), letting \(\varrho\downarrow0\) gives
  \[
    \chi(1)
    \leq
    \left(
    \lambda^{1-\alpha}+(1-\alpha)\beta
    \right)^{\frac{1}{1-\alpha}}.
  \]
  Substituting the definitions of \(\lambda\) and \(\beta\), we obtain
  \begin{align*}
    \chi(1)
     & \leq
    \|s\|
    \left(
    \bigl(L_0+L_1\|\nabla f(x)\|^\alpha\bigr)^{1-\alpha}
    +(1-\alpha)L_1\|s\|^\alpha
    \right)^{\frac{1}{1-\alpha}}
    \\
     & =
    \bigl(L_0+L_1\|\nabla f(x)\|^\alpha\bigr)
    \galpha(\|s\|,\|\nabla f(x)\|)\|s\|,
  \end{align*}
  where the final identity follows from~\eqref{eq:def-g-alpha} and \(L_0+L_1\|\nabla f(x)\|^\alpha>0\).
  This proves~\eqref{eq:gradient-difference}.
  Finally, the fundamental theorem of calculus gives
  \begin{align*}
    |f(x+s)-f(x)- \nabla f(x)^T s|
     & \leq
    \int_0^1
    \|\nabla f(x+ts)-\nabla f(x)\|\|s\|\,dt
    \\
     & \leq
    \bigl(L_0+L_1\|\nabla f(x)\|^\alpha\bigr)
    \galpha(\|s\|,\|\nabla f(x)\|)
    \|s\|^2\int_0^1t\,dt
    \\
     & =
    \tfrac12
    \bigl(L_0+L_1\|\nabla f(x)\|^\alpha\bigr)
    \galpha(\|s\|,\|\nabla f(x)\|)\|s\|^2,
  \end{align*}
  where we used~\eqref{eq:gradient-difference} with \(ts\) in place of \(s\) and the monotonicity of \(\galpha(\cdot,\|\nabla f(x)\|)\).
\end{proofEE}

When \(\alpha = 1\), the function \(\galpha\) only depends on \(a\) and not on \(b\), but we keep the notation \(\galpha(a,b)\) for consistency with the case \(\alpha \in [0,1)\).

The bounds in \Cref{lem:decrease-L0-L1} are tighter than those of \citet{chen-zhou-liang-lu-2023} under the \((\alpha,L_0,L_1)\)-smoothness condition for \(\alpha < 1\).
Moreover, as \(\alpha \to 1^-\), the function \(\galpha\) converges to \(\exp(L_1a)\), whereas the bound of \citet{chen-zhou-liang-lu-2023} diverges to \(+\infty\).

In our analysis, we will need the following properties.
The proof of these properties is straightforward and is omitted for brevity.

\begin{proposition}%
  \label{prop:galpha-properties}
  For \(\alpha \in [0,1]\), let \(\galpha\) be defined as in \Cref{lem:decrease-L0-L1}.
  Then,
  \begin{enumerate}
    \item\label{item:galpha-C1}
      \(\galpha\) is continuous on \([0,\infty)\times(0,\infty)\) and \(\mathcal{C}^1((0,\infty)^2)\).
    \item\label{item:galpha-monotone-a}
      For all fixed \(b > 0\), the function \(a \mapsto \galpha(a,b)\) is nondecreasing on \([0,\infty)\).
    \item\label{item:galpha-monotone-b}
      For all fixed \(a \ge 0\), the function \(b \mapsto \galpha(a,b)\) is nonincreasing on \((0,\infty)\).
    \item\label{item:galpha-limit-a0}
      For all \(\alpha>0\) and \(b > 0\), \( \lim_{a \to 0^+} \galpha(a,b) = 1\).
    \item\label{item:galpha-lower-bound}
      For all \(a \ge 0\) and  \(b > 0\), \(\galpha(a,b) \geq 1\).
  \end{enumerate}
\end{proposition}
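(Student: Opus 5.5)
Each of the five items in \Cref{prop:galpha-properties} follows from the explicit formula~\eqref{eq:def-g-alpha}, so I would treat the two branches \(\alpha=1\) and \(\alpha\in[0,1)\) separately and use elementary calculus throughout. For \(\alpha=1\) the items are immediate: \(\galpha(a,b)=\exp(L_1a)\) is smooth, nondecreasing in \(a\) because \(L_1\ge0\), constant in \(b\), equal to \(1\) at \(a=0\), and at least \(1\) for \(a\ge0\). All the work is therefore in the branch \(\alpha\in[0,1)\).

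For \(\alpha\in[0,1)\), I would write \(\galpha(a,b)=\phi(\theta(a,b))\), where
\[
  \theta(a,b)\defeq \frac{(1-\alpha)L_1a^\alpha}{(L_0+L_1b^\alpha)^{1-\alpha}}
  \quad\text{and}\quad
  \phi(z)\defeq(1+z)^{1/(1-\alpha)}.
\]
Here \(\phi\) is smooth and increasing on \((-1,\infty)\), and \(\theta\ge0\) on the domain. The denominator is positive because \(L_0+L_1b^\alpha>0\) is assumed in the definition. I would confirm this holds for every \(b>0\): either \(L_0>0\), or \(L_1>0\) and \(b^\alpha>0\).

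The items then follow in order. Continuity (item~\ref{item:galpha-C1}) holds because \(a\mapsto a^\alpha\) is continuous on \([0,\infty)\) with the convention \(0^0=1\), and \(b\mapsto b^\alpha\) is \(\mathcal C^1\) on \((0,\infty)\). The \(\mathcal C^1\) regularity on the open quadrant holds because \(a^\alpha\) is smooth for \(a>0\). Monotonicity in \(a\) (item~\ref{item:galpha-monotone-a}) follows since \(a^\alpha\) is nondecreasing and \(\phi\) is increasing. Monotonicity in \(b\) (item~\ref{item:galpha-monotone-b}) follows since \((L_0+L_1b^\alpha)^{1-\alpha}\) is nondecreasing in \(b\), as \(1-\alpha>0\), so \(\theta\) is nonincreasing in \(b\). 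Item~\ref{item:galpha-limit-a0} uses \(a^\alpha\to0\) for \(\alpha>0\), which gives \(\theta\to0\) and \(\phi(0)=1\). Finally, item~\ref{item:galpha-lower-bound} holds because \(\theta\ge0\) gives \(\phi(\theta)\ge\phi(0)=1\).

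The main obstacle is the degenerate edge case \(\alpha=0\), where \(a^\alpha=1\) even at \(a=0\) by the paper's convention. Continuity in \(a\) is then trivial, but \(\galpha(0,b)=(1+L_1/(L_0+L_1))\neq1\). This is why item~\ref{item:galpha-limit-a0} is stated only for \(\alpha>0\), and I would remark on it explicitly. I would also check that no item is claimed at \(b=0\), where the first branch may not be defined when \(L_0=0\).
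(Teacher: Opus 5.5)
Your proposal is correct. It is the direct case-by-case verification from the explicit formula~\eqref{eq:def-g-alpha} that the paper calls straightforward and omits. One small precision: for \(\alpha=0\), the value \(\galpha(0,b)=1+L_1/(L_0+L_1)\) differs from \(1\) only when \(L_1>0\), which affects none of the five items.
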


In the next section, we present the trust-region algorithm that we analyze in this paper, and we state the main assumptions on the model Hessian.

\section{Trust-region algorithm}%
\label{sec:TR}

\subsection{Algorithm}%
\label{sec:TR-algorithm}

At each iteration \(k\), we compute a step \(s_k\) from the current iterate \(x_k\) as an (inexact) solution of the subproblem
\begin{align}%
  \label{subprob:TR}
   & \min_{s \in \R^{n}} m_k(s) \quad \text{s.t.} \ \|s\| \leq \Delta_k, \quad \text{where } m_k(s) \defeq f(x_k) + \nabla f(x_k)^{T} s + \tfrac{1}{2} s^{T} B_k s,
\end{align}
where \(B_k = B_k^T\) and \(\Delta_k > 0\) is the trust-region radius.

Only an approximate solution of~\eqref{subprob:TR} is required; the step \(s_k\) is required to achieve at least a fixed fraction of the Cauchy decrease.
The decrease required of \(s_k\) is stated as
\begin{equation}%
  \label{eq:suff-decrease}
  m_k(0) - m_k(s_k) \ge \kdc \|\nabla f(x_k)\| \min \left\{\frac{\|\nabla f(x_k)\|}{\|B_k\|}\, , \, \Delta_k \right\},
\end{equation}
where \(0 < \kdc \le \frac{1}{2}\), with the convention that \(\frac{\|\nabla f(x_k)\|}{\|B_k\|} = +\infty\) if \(B_k = 0\).
In particular, if \(s_k\) solves exactly the subproblem~\eqref{subprob:TR} then it satisfies~\eqref{eq:suff-decrease}.

Once a trial step \(s_k\) has been determined, the decrease in \(f\) at \(x_k + s_k\) is compared to the decrease predicted by the model.
If the actual and predicted decreases are in sufficient agreement, \(x_k + s_k\) becomes the new iterate, and \(\Delta_k\) is possibly increased.
If the model predicts the actual decrease poorly, the trial point is rejected and \(\Delta_k\) is reduced.
\Cref{alg:TR-GEN} states the entire procedure.

\begin{algorithm}[ht]%
  \caption{%
    \label{alg:TR-GEN}
    Classical trust-region algorithm \citet[Algorithm 2.3.1.]{cartis-gould-toint-2022}.}
  \begin{algorithmic}[1]%
    \State Choose \(x_0\in\R^n\), \(\Delta_0>0\), \(0<\eta_1\leq\eta_2<1\), and \(0<\gamma_1\leq\gamma_2<1\leq\gamma_3\).
    \For{\(k=0,1,\dots\)}
    \State%
    \label{alg:TR-GEN:Bk}
    Choose \(B_k=B_k^T\in\R^{n\times n}\).
    \State%
    \label{alg:TR-GEN:step-computation}
    Compute \(s_k\) satisfying~\eqref{eq:suff-decrease}.
    \State%
    \label{alg:TR-GEN:step-rhok}
    Set
    \[
      \rho_k\defeq
      \frac{f(x_k)-f(x_k+s_k)}
      {m_k(0)-m_k(s_k)}.
    \]
    \State%
    \label{alg:TR-GEN:step-accept}
    Set \(x_{k+1}=x_k+s_k\) if \(\rho_k\geq\eta_1\), and \(x_{k+1}=x_k\) otherwise.
    \State%
    \label{alg:TR-GEN:step-update}
    Choose
    \[
      \Delta_{k+1}\in
      \begin{cases}
        [\Delta_k,\gamma_3\Delta_k],
         & \rho_k\geq\eta_2,        \\
        [\gamma_2\Delta_k,\Delta_k],
         & \eta_1\leq\rho_k<\eta_2, \\
        [\gamma_1\Delta_k,\gamma_2\Delta_k],
         & \rho_k<\eta_1.
      \end{cases}
    \]
    \EndFor
  \end{algorithmic}
\end{algorithm}

In \Cref{alg:TR-GEN}, no upper bound is imposed on the trust-region radius \(\Delta_k\), which may therefore become arbitrarily large.

We will repeatedly use the notation
\begin{align*}%
  \mathcal{S}   & \defeq \{ i \in \N \mid \rho_i \geq \eta_1 \}               &  & \qquad \text{(all successful iterations)}
  \\
  \mathcal{S}_k & \defeq \{ i \in \mathcal{S} \mid i \le k \}                 &  & \qquad \text{(successful iterations until iteration \(k\))}
  \\
  \mathcal{U}_k & \defeq \{ i \in \N \mid i\not \in \mathcal{S}, \ i \le k \} &  & \qquad \text{(unsuccessful iterations until iteration \(k\))}.
\end{align*}

We recall the following standard relation between the numbers of successful and unsuccessful iterations~{\citep[Lemma~\(2.3.1\)]{cartis-gould-toint-2022}}.

\begin{lemma}%
  \label{lem:bound-unsuccessful-iterations}
  For all \(k \in \N^*\), \(\left|\mathcal{U}_{k-1}\right| \leq |\log_{\gamma_2} (\gamma_3)| \left|\mathcal{S}_{k-1}\right| +  \log_{\gamma_2}\left(\frac{\Delta_{k}}{\Delta_0}\right)\).
\end{lemma}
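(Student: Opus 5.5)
We will track the logarithm of the trust-region radius across iterations 0 through k-1 and separate multiplicative increases from decreases. The update rule of \Cref{alg:TR-GEN} gives \(\Delta_{i+1} \le \gamma_3 \Delta_i\) for every successful iteration \(i \in \mathcal{S}\), whether \(\rho_i \ge \eta_2\) or \(\eta_1 \le \rho_i < \eta_2\). For the latter case the upper end of the interval is \(\Delta_i \le \gamma_3\Delta_i\), since \(\gamma_3 \ge 1\). For every unsuccessful iteration \(i \notin \mathcal{S}\), the rule gives \(\Delta_{i+1} \le \gamma_2 \Delta_i\).

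Chaining these inequalities from \(i = 0\) to \(i = k-1\) then yields
\[
  \Delta_k \le \Delta_0\, \gamma_3^{|\mathcal{S}_{k-1}|}\, \gamma_2^{|\mathcal{U}_{k-1}|}.
\]
This holds because the iterations \(0,\dots,k-1\) are partitioned into \(\mathcal{S}_{k-1}\) and \(\mathcal{U}_{k-1}\).

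Next we take \(\log_{\gamma_2}\) of both sides. Since \(\gamma_2 \in (0,1)\), this function is decreasing, so the inequality reverses:
\[
  \log_{\gamma_2}\!\Bigl(\frac{\Delta_k}{\Delta_0}\Bigr) \ge |\mathcal{S}_{k-1}| \log_{\gamma_2}(\gamma_3) + |\mathcal{U}_{k-1}|.
\]
Rearranging gives \(|\mathcal{U}_{k-1}| \le -\log_{\gamma_2}(\gamma_3)\,|\mathcal{S}_{k-1}| + \log_{\gamma_2}(\Delta_k/\Delta_0)\). Because \(\gamma_3 \ge 1\) and \(\gamma_2<1\), we have \(\log_{\gamma_2}(\gamma_3) \le 0\), so \(-\log_{\gamma_2}(\gamma_3) = |\log_{\gamma_2}(\gamma_3)|\). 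This is exactly the claimed bound.

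There is no real obstacle; the only point requiring care is the sign bookkeeping when applying the logarithm with base \(\gamma_2<1\). Two further details need attention. First, the middle case of the update rule must be absorbed into the \(\gamma_3\) bound. Second, we need \(\Delta_k>0\) for all \(k\), so that the logarithms are defined; this follows by induction, since each update multiplies \(\Delta_i\) by a factor of at least \(\gamma_1>0\).
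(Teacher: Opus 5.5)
Your proof is correct and is the standard argument behind Lemma~2.3.1 of Cartis, Gould and Toint, which the paper cites rather than reproves: you bound \(\Delta_k \le \Delta_0\gamma_3^{|\mathcal{S}_{k-1}|}\gamma_2^{|\mathcal{U}_{k-1}|}\) and take \(\log_{\gamma_2}\), reversing the inequality. You also correctly absorb the middle case \(\eta_1\le\rho_i<\eta_2\) into the \(\gamma_3\) bound, and you correctly justify that \(\Delta_k>0\) so the logarithms are defined.
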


\Cref{lem:bound-unsuccessful-iterations} implies that the number of unsuccessful iterations is at most a constant times the number of successful iterations plus a logarithmic term that can be controlled using a lower bound on \(\Delta_k\).
Unless otherwise stated, we will not explicitly keep track of the number of unsuccessful iterations in our analysis and will only focus on the number of successful iterations and the lower bound on \(\Delta_k\).

\subsection{Model Hessian bounds}%
\label{subsec:model-hessian-bounds}

We make the following assumption.

\begin{modelassumption}%
  \label{asm:Bk-L0-L1}
  There exist \(b_0,b_1\ge0\) such that for all \(k\in\mathbb N\),
  \begin{equation}
    \label{def-generalized-smooth-Bk}
    \|B_k\|
    \le
    b_0+b_1\|\nabla f(x_k)\|^\alpha.
  \end{equation}
\end{modelassumption}

\Cref{asm:Bk-L0-L1} allows for three standard choices: \(B_k=0\) and \(B_k=\lambda I\) satisfy it with \(b_1=0\), the former yielding normalized-gradient-type trust-region steps and the latter clipped-gradient steps~\citep{zhang2019gradient}, while \(B_k=\nabla^2 f(x_k)\) satisfies it with \((b_0,b_1)=(L_0,L_1)\) whenever \(f\) is twice continuously differentiable, by~\eqref{eq:L0L1-smoothness-hessian}.

Moreover, \Cref{asm:Bk-L0-L1} is compatible with limited-memory quasi-Newton approximations as shown in the following proposition.

\begin{propositionE}[Limited-memory quasi-Newton updates][end]%
  \label{lem:limited-memory-qn-bounds-2}
  Let \(m\in\mathbb N\) be the memory size.
  At iteration \(k\), let \(B_k\) be obtained from an initial matrix \(B_{k,0}\)
  and a set of pairs \(\{(s_i,y_i)\}_{i\in\mathcal I_k}\) stored at
  \emph{successful} iterations, where
  \(\mathcal I_k=\{i_0,\ldots,i_{q_k-1}\}\subseteq\mathcal S\),
  \(q_k\defeq|\mathcal I_k|\le m\), \(s_i\neq0\), and
  \(y_i=\nabla f(x_i+s_i)-\nabla f(x_i)\).
  Assume that \(B_{k,0}\) satisfies \Cref{asm:Bk-L0-L1}.
  Then, for any of the following limited-memory quasi-Newton updates, \(B_k\) satisfies \Cref{asm:Bk-L0-L1}.

  \begin{enumerate}
    \renewcommand{\labelenumi}{\textup{(\roman{enumi})}}

    \item \textbf{Limited-memory BFGS \citep{burdakov-gong-zirkin-yuan-2017}.}
      Assume that \(B_{k,0}\succ0\) and every stored pair satisfies \(y_{i_j} \neq 0\) and \(s_{i_j}^T y_{i_j}\ge
      \omega_{\rm BFGS}\|s_{i_j}\|\|y_{i_j}\|\), where \(\omega_{\rm BFGS}\in(0,1)\).
      Set \(B_k=B_{k,q_k}\), where, for \(j=0,\ldots,q_k-1\),
      \begin{equation}
        \label{eq:LBFGS-update}
        B_{k,j+1}
        =
        B_{k,j}
        -
        \frac{B_{k,j}s_{i_j}s_{i_j}^T B_{k,j}}
             {s_{i_j}^T B_{k,j}s_{i_j}}
        +
        \frac{y_{i_j}y_{i_j}^T}{s_{i_j}^T y_{i_j}}.
      \end{equation}

    \item \textbf{Modified limited-memory BFGS.}
      Assume that \(B_{k,0}\succ0\) and every stored pair satisfies \(y_{i_j} \neq 0\) and \(s_{i_j}^T y_{i_j}\ge \bar\omega_{\rm BFGS}\|y_{i_j}\|^2\), where
      \(\bar\omega_{\rm BFGS}\in(0,1)\).
      Set \(B_k=B_{k,q_k}\) as in~\eqref{eq:LBFGS-update}.

    \item \textbf{Limited-memory SR1 \citep{burdakov-gong-zirkin-yuan-2017}.}
      Set \(z_{i_j}=y_{i_j}-B_{k,j}s_{i_j}\) and assume that every stored pair satisfies \(z_{i_j} \neq 0\) and \(|s_{i_j}^T z_{i_j}|\ge \omega_{\rm SR1}\|s_{i_j}\|\|z_{i_j}\|\), where \(\omega_{\rm SR1}\in(0,1)\).
      Set \(B_k=B_{k,q_k}\), where, for \(j=0,\ldots,q_k-1\),
      \begin{equation}
        \label{eq:LSR1-update}
        B_{k,j+1} = B_{k,j} + \frac{(y_{i_j}-B_{k,j}s_{i_j})(y_{i_j}-B_{k,j}s_{i_j})^T}{(y_{i_j}-B_{k,j}s_{i_j})^T s_{i_j}}.
      \end{equation}

    \item \textbf{Modified limited-memory SR1 \citep{bigeon2023framework,aravkin-baraldi-orban-2021}.}
      Set \(z_{i_j}=y_{i_j}-B_{k,j}s_{i_j}\) and assume that every stored pair satisfies \(z_{i_j} \neq 0\) and \(|s_{i_j}^T z_{i_j}|\ge \bar\omega_{\rm SR1}\|z_{i_j}\|^2\), where \(\bar\omega_{\rm SR1}\in(0,1)\).
      Set \(B_k=B_{k,q_k}\) as in~\eqref{eq:LSR1-update}.

    \item \textbf{Limited-memory PSB \citep{powell-2010}.}
      Set \(z_{i_j}=y_{i_j}-B_{k,j}s_{i_j}\) and
      \(B_k=B_{k,q_k}\), where, for \(j=0,\ldots,q_k-1\),
      \begin{equation}
        \label{eq:LPSB-update}
        B_{k,j+1}
        =
        B_{k,j}
        +
        \frac{z_{i_j}s_{i_j}^T+s_{i_j}z_{i_j}^T}{\|s_{i_j}\|^2}
        -
        \frac{z_{i_j}^T s_{i_j}}{\|s_{i_j}\|^4}
        s_{i_j}s_{i_j}^T.
      \end{equation}

    \item \textbf{Modified limited-memory PSB.}
      Assume that every stored pair satisfies \(\|s_{i_j}\|\ge \bar\omega_{\rm PSB}\|y_{i_j}-B_{k,j}s_{i_j}\|\), where \(\bar\omega_{\rm PSB}>0\).
      Set \(B_k=B_{k,q_k}\) as in~\eqref{eq:LPSB-update}.
  \end{enumerate}
\end{propositionE}

\begin{proofEE}
  We consider each update separately.

  \paragraph{(i) Limited-memory BFGS}
  Iterating over at most \(m\) stored pairs and applying \citet[Lemma~\(5\)]{burdakov-gong-zirkin-yuan-2017} and \Cref{lem:bounded-secant-slopes-L0-L1} shows that
  \[
  \|B_k\| \le \|B_{k,0}\| + \frac{m b_m}{\omega_{\rm BFGS}}
  \] 
  where \(b_m\) is the bound defined in \Cref{lem:bounded-secant-slopes-L0-L1}, and hence \(B_k\) satisfies \Cref{asm:Bk-L0-L1}.

  \paragraph{(ii) Modified limited-memory BFGS}
  Using the same argument, one get that,
  \[
    \|B_k\|
    \le
    \|B_{k,0}\|
    +
    \frac{m}{\bar \omega_{\rm BFGS}}
  \]
  and the conclusion follows directly.

  \paragraph{(iii) Limited-memory SR1}
  Let \(z_{i_j}\defeq y_{i_j}-B_{k,j}s_{i_j}\).
  Then,
  \[
    \|B_{k,j+1}\|
    \le
    \|B_{k,j}\|
    +
    \frac{\|z_{i_j}\|^2}{|z_{i_j}^T s_{i_j}|}
    \le
    \|B_{k,j}\|
    +
    \frac{\|z_{i_j}\|}
         {\omega_{\rm SR1}\|s_{i_j}\|}
    \le
    \left(1+\frac{1}{\omega_{\rm SR1}}\right)\|B_{k,j}\|
    +
    \frac{\|y_{i_j}\|}{\omega_{\rm SR1}\|s_{i_j}\|}.
  \]
  Iterating over at most \(m\) stored pairs and applying \Cref{lem:bounded-secant-slopes-L0-L1} shows that
  \[
    \|B_k\|
    \le
    \left(1+\frac{1}{\omega_{\rm SR1}}\right)^m\|B_{k,0}\|
    +
    \left( \left(1 + \frac{1}{\omega_{\rm SR1}} \right)^m - 1\right) b_m,
  \]
  where \(b_m\) is defined in \Cref{lem:bounded-secant-slopes-L0-L1}, and hence \(B_k\) satisfies \Cref{asm:Bk-L0-L1}.

  \paragraph{(iv) Modified limited-memory SR1}
  Using the same argument as in \citet{aravkin-baraldi-orban-2021},
  \[
    \|B_k\|
    \le
    \|B_{k,0}\|
    +
    \frac{m}{\bar \omega_{\rm SR1}}.
  \]

  \paragraph{(v) Limited-memory PSB}
  By the bound derived in \citet[Equation~\(2.13\)]{powell-2010}, iterating over at most \(m\) stored pairs and applying \Cref{lem:bounded-secant-slopes-L0-L1} shows that
  \[
    \|B_k\|
    \le
    \|B_{k,0}\|
    +
    3 m b_m,
  \]
  where \(b_m\) is defined in \Cref{lem:bounded-secant-slopes-L0-L1}, and hence \(B_k\) satisfies \Cref{asm:Bk-L0-L1}.

  \paragraph{(vi) Modified limited-memory PSB}
  Let \(z_{i_j}\defeq y_{i_j}-B_{k,j}s_{i_j}\).
  Then,
  \[
    \|B_{k,j+1}-B_{k,j}\|
    \le
    \frac{2\|z_{i_j}\|}{\|s_{i_j}\|}
    +
    \frac{|z_{i_j}^T s_{i_j}|}{\|s_{i_j}\|^2}
    \le
    \frac{3\|z_{i_j}\|}{\|s_{i_j}\|}.
  \]
  Iterating over at most \(m\) stored pairs and using the safeguard implies,
  \[
    \|B_k\|
    \le
    \|B_{k,0}\|
    +
    \frac{3m}{\bar\omega_{\rm PSB}}.
  \]
\end{proofEE}


\Cref{lem:limited-memory-qn-bounds-2} shows that the considered limited-memory quasi-Newton updates preserve \Cref{asm:Bk-L0-L1} whenever the initial matrix \(B_{k,0}\) satisfies this assumption.
Possible choices include \(B_{k,0}=\lambda I\), or, when \(\alpha\) is known,
\(B_{k,0}=\|\nabla f(x_k)\|^\alpha I\).

\subsection{Preliminary results}

Without loss of generality, we assume that \(L_0 \ge b_0\) and \(L_1 \ge b_1\), thereby avoiding the need to repeatedly take the maximum of the corresponding constants, where \(L_0,L_1,b_0\), and \(b_1\) are defined in \Cref{asm:smooth,asm:Bk-L0-L1}.

We will repeatedly use
\begin{equation}
  \label{eq:psi}
  \psialpha(t) \defeq \frac{t}{\tl{0} + \tl{1} \, t^\alpha}.
\end{equation}
Note that \(\psialpha\) is nondecreasing on \((0,\infty)\).
The following lemma states a consequence of the Cauchy decrease~\eqref{eq:suff-decrease} in terms of \(\psialpha\).

\begin{lemma}%
  \label{lem:sufficient-cauchy-decrease}
  Let \Cref{asm:problem-obj,asm:Bk-L0-L1} hold.
  For all \(k \in \N\),
  \begin{equation}%
    \label{eq:cauchy-sharp}
    m_{k}(0)-m_{k}\left(s_{k}\right) \geq  \kdc \|\nabla f(x_k)\| \min \left\{\psialpha(\|\nabla f(x_k)\|)\, , \, \Delta_k \right\},
  \end{equation}
  where \(\psialpha\) is defined in~\eqref{eq:psi}.
\end{lemma}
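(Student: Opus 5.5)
The plan is to start from the Cauchy decrease condition~\eqref{eq:suff-decrease} and bound the ratio \(\|\nabla f(x_k)\|/\|B_k\|\) from below by \(\psialpha(\|\nabla f(x_k)\|)\). Since the right-hand side of~\eqref{eq:suff-decrease} is \(\kdc\|\nabla f(x_k)\|\) times a minimum, and the minimum is monotone in each argument, replacing the first argument by a smaller quantity preserves the inequality. So it suffices to show
\[
  \frac{\|\nabla f(x_k)\|}{\|B_k\|} \ge \psialpha(\|\nabla f(x_k)\|)
  \qquad\text{whenever } B_k\neq 0 .
\]

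For this inequality, I would invoke \Cref{asm:Bk-L0-L1} together with the standing convention \(L_0\ge b_0\), \(L_1\ge b_1\). These give
\[
  \|B_k\|\le b_0+b_1\|\nabla f(x_k)\|^\alpha \le L_0+L_1\|\nabla f(x_k)\|^\alpha .
\]
Dividing \(\|\nabla f(x_k)\|\) by both sides yields exactly \(\|\nabla f(x_k)\|/\|B_k\|\ge \psialpha(\|\nabla f(x_k)\|)\), provided the denominators are positive.

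The remaining work is the degenerate cases.
\begin{itemize}
  \item If \(B_k=0\), the convention sets the first argument to \(+\infty\), so the minimum in~\eqref{eq:suff-decrease} equals \(\Delta_k\). This is at least \(\min\{\psialpha(\cdot),\Delta_k\}\), which settles the case.
  \item If \(\nabla f(x_k)=0\), both sides of~\eqref{eq:cauchy-sharp} vanish: the right-hand side carries the factor \(\|\nabla f(x_k)\|\), and the left-hand side is nonnegative by~\eqref{eq:suff-decrease}. This requires reading \(0^0=1\) when \(\alpha=0\), so that \(\psialpha(0)=0\) is well defined once \(L_0+L_1>0\).
  \item If \(L_0+L_1\|\nabla f(x_k)\|^\alpha=0\), then \(\|B_k\|=0\), which is the first case. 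Definition~\ref{def:smooth-integral} requires \(L_0>0\) or \(L_1>0\), so this can only happen at a zero gradient, where \(\psialpha\) would be interpreted as \(+\infty\) or simply avoided.
\end{itemize}

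There is no real obstacle here. The only care needed is handling these degenerate cases (zero \(B_k\), zero gradient, vanishing denominator) consistently with the paper's conventions. The core is the one-line monotonicity argument above.
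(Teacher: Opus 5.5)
Your proof is correct and is the argument the paper intends; the paper states this lemma without an explicit proof, as an immediate consequence of~\eqref{eq:suff-decrease}. \Cref{asm:Bk-L0-L1} with the standing convention \(L_0\ge b_0\), \(L_1\ge b_1\) gives \(\|B_k\|\le L_0+L_1\|\nabla f(x_k)\|^\alpha\), hence \(\|\nabla f(x_k)\|/\|B_k\|\ge\psi(\|\nabla f(x_k)\|)\), and monotonicity of the minimum finishes; your handling of the degenerate cases (\(B_k=0\), \(\nabla f(x_k)=0\), the \(0^0=1\) convention) is consistent with the paper's conventions and fills in details it leaves implicit.
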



The following model-error estimate is a consequence of \Cref{lem:decrease-L0-L1}~\eqref{eq:function-difference} and \Cref{prop:galpha-properties}~\eqref{item:galpha-lower-bound}.

\begin{lemma}%
  \label{lem:model-error}
  Let \Cref{asm:problem-obj,asm:smooth,asm:Bk-L0-L1} hold.
  For all \(k \in \N\),
  \begin{equation}%
    \label{eq:model-error}
    |f(x_k + s_k) - m_k(s_k)| \le (\tl{0} + \tl{1} \|\nabla f(x_k)\|^\alpha) \, \galpha(\|s_k\|, \|\nabla f(x_k)\|) \|s_k\|^2,
  \end{equation}
  where \(\galpha\) is defined in~\Cref{lem:decrease-L0-L1}.
\end{lemma}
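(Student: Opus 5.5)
The plan is to split the model error into the Taylor remainder of \(f\) and the quadratic model term, and to bound each by the right-hand side of~\eqref{eq:model-error}. Since \(m_k(s_k) = f(x_k) + \nabla f(x_k)^T s_k + \tfrac12 s_k^T B_k s_k\), the triangle inequality gives
\[
  |f(x_k+s_k) - m_k(s_k)|
  \le
  |f(x_k+s_k) - f(x_k) - \nabla f(x_k)^T s_k|
  +
  \tfrac12 |s_k^T B_k s_k|.
\]

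First, assume \(\nabla f(x_k) \neq 0\). Then \Cref{lem:decrease-L0-L1}, inequality~\eqref{eq:function-difference}, applied with \(x = x_k\) and \(s = s_k\), bounds the first term by \(\tfrac12 (L_0 + L_1\|\nabla f(x_k)\|^\alpha)\, \galpha(\|s_k\|,\|\nabla f(x_k)\|)\, \|s_k\|^2\). For the second term, I use \(|s_k^T B_k s_k| \le \|B_k\|\,\|s_k\|^2\) and \Cref{asm:Bk-L0-L1}, which gives \(\|B_k\| \le b_0 + b_1\|\nabla f(x_k)\|^\alpha\). The standing convention \(L_0 \ge b_0\) and \(L_1 \ge b_1\) then bounds the second term by \(\tfrac12 (L_0 + L_1\|\nabla f(x_k)\|^\alpha)\|s_k\|^2\). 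Finally, \Cref{prop:galpha-properties}~\eqref{item:galpha-lower-bound} gives \(\galpha \ge 1\), so I can insert the factor \(\galpha(\|s_k\|,\|\nabla f(x_k)\|)\) into the second bound. Adding the two halves yields~\eqref{eq:model-error}.

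The main obstacle is the degenerate case \(\nabla f(x_k) = 0\). Here \Cref{lem:decrease-L0-L1} is stated only for nonzero gradients, and \(\galpha(\cdot, 0)\) lies outside the domain \((0,\infty)\) in its second argument. I would handle this by reading \(\galpha(a,0)\) through the defining formula~\eqref{eq:def-g-alpha}, which remains meaningful whenever \(L_0 > 0\) (and always for \(\alpha = 1\)).

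To justify this reading, I note that the proof of~\eqref{eq:gradient-difference} and~\eqref{eq:function-difference} never uses \(\nabla f(x) \neq 0\); it uses only \(L_0 + L_1\|\nabla f(x)\|^\alpha > 0\). Alternatively, if the algorithm stops at stationary points, as it would with the stopping test \(\|\nabla f(x_k)\| \le \epsilon\), the case simply does not arise. I would state this convention explicitly and then apply the same argument verbatim.
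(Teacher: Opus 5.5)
Your proposal is correct and is the argument the paper intends: the paper only says the lemma follows from \eqref{eq:function-difference} and \Cref{prop:galpha-properties}~\eqref{item:galpha-lower-bound}, and unpacking that gives exactly your triangle-inequality split, with the model term bounded via \Cref{asm:Bk-L0-L1}, the convention \(L_0\ge b_0\), \(L_1\ge b_1\), and \(\galpha\ge1\). Your treatment of \(\nabla f(x_k)=0\) goes beyond the paper, which handles that case only implicitly, by treating it as termination at a stationary point (\(\galpha\) is defined only for \(b>0\)); note also that for \(\alpha=0\) the convention \(0^0=1\) keeps \(\galpha(a,0)\) well defined even when \(L_0=0\).
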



\section{Complexity analysis}%
\label{sec:unbounded-radius}

The complexity analysis of \Cref{alg:TR-GEN} relies on the following lemma, which establishes the well-definedness and monotonicity of the function \(\halpha\) that will be used to define a lower bound on \(\Delta_k\).

\begin{lemmaE}[Definition and monotonicity of \(\halpha\)][end]%
  \label{lem:h-alpha-properties}
  Let \(\alpha\in[0,1]\), and consider the function \(\galpha\) defined in \Cref{lem:decrease-L0-L1}.
  Define
  \begin{equation}
    \label{eq:def-Psi-alpha}
    \Psialpha(a,b)
    \defeq
    a \galpha(a,b),
    \qquad a,b>0.
  \end{equation}
  Then, for every \(b>0\), the mapping \(\Psialpha(\cdot,b)\) is a bijection from \((0,\infty)\) onto \((0,\infty)\).
  Consequently, \(\halpha:(0,\infty)^2 \to (0,\infty)\)
  \begin{equation}
    \label{eq:def-h-alpha}
    \halpha(b,t)
    \defeq
    \bigl(\Psialpha(\cdot,b)\bigr)^{-1}(t),
  \end{equation}
  is well-defined.
  Equivalently, \(\halpha(b,t)\) is the unique positive number satisfying
  \begin{equation}
    \label{eq:h-alpha-implicit-identity}
    \Psialpha\bigl(\halpha(b,t),b\bigr)=t.
  \end{equation}

  Moreover, \(\halpha\in\mathcal{C}^1((0,\infty)^2)\), is strictly increasing with respect to \(t\), and is nondecreasing with respect to \(b\).
  More precisely,
  \begin{equation}
    \label{eq:dt-h-alpha}
    \partial_t \halpha(b,t) =\frac{1}{\partial_a\Psialpha\bigl(\halpha(b,t),b\bigr)} > 0 \qquad \text{and}\qquad \partial_b \halpha(b,t) = - \frac{\partial_b\Psialpha\bigl(\halpha(b,t),b\bigr)}{\partial_a\Psialpha\bigl(\halpha(b,t),b\bigr)} \ge 0.
  \end{equation}
\end{lemmaE}

\begin{proofEE}
  Fix \(b>0\).
  By \Cref{prop:galpha-properties}~\eqref{item:galpha-monotone-a}, the mapping \(a\mapsto \galpha(a,b)\) is nondecreasing and satisfies \(\galpha(a,b)\ge 1\).
  Hence, for \(0<a_1<a_2\), \(\Psialpha(a_2,b)=a_2\galpha(a_2,b) \ge a_2\galpha(a_1,b) > a_1\galpha(a_1,b)=\Psialpha(a_1,b)\).
  Thus, \(\Psialpha(\cdot,b)\) is strictly increasing.

  Moreover, \(\lim_{a\downarrow0}\Psialpha(a,b) =\lim_{a\downarrow0}a\,\galpha(a,b)=0\).
  For \(\alpha>0\) this follows from \Cref{prop:galpha-properties}~\eqref{item:galpha-limit-a0}; for \(\alpha=0\), the convention \(0^0=1\) gives \(\galpha(a,b)=1+L_1/(L_0+L_1)\), which is constant in \(a\), so the limit is again \(0\).
  On the other hand, since \(\Psialpha(a,b)=a \galpha(a,b)\ge a\), we have \(\lim_{a\uparrow\infty}\Psialpha(a,b)=+\infty\).
  Since \(\Psialpha(\cdot,b)\) is continuous, it is a bijection from \((0,\infty)\) onto \((0,\infty)\).
  Consequently, \(\halpha\) is well-defined by~\eqref{eq:def-h-alpha}.

  According to \Cref{prop:galpha-properties}~\eqref{item:galpha-C1}, \(\Psialpha \in \mathcal{C}^1((0,\infty)^2)\).
  Furthermore, the monotonicity of \(\galpha(\cdot,b)\) in \Cref{prop:galpha-properties}~\eqref{item:galpha-monotone-a} implies \(\partial_a \galpha(a,b)\ge0\), and hence
  \begin{equation}
    \label{eq:partial-a-Psi-positive}
    \partial_a\Psialpha(a,b) = \galpha(a,b)+a\,\partial_a \galpha(a,b) \ge \galpha(a,b) \ge 1.
  \end{equation}

  Define \(F(a,b,t)\defeq\Psialpha(a,b)-t\).
  For every \(b, t > 0\), we have \(F(\halpha(b,t),b,t)=0\), while \(\partial_a F(\halpha(b,t),b,t)>0\) by~\eqref{eq:partial-a-Psi-positive}.
  The implicit function theorem therefore yields \(\halpha\in\mathcal{C}^1((0,\infty)^2)\).

  Differentiating~\eqref{eq:h-alpha-implicit-identity} with respect to \(t\) gives 
  \[\partial_a\Psialpha(\halpha(b,t),b)\, \partial_t \halpha(b,t)=1,\]
  which proves the first identity in~\eqref{eq:dt-h-alpha} because \(\partial_a\Psialpha(\halpha(b,t),b)>0\) by~\eqref{eq:partial-a-Psi-positive}.

  Similarly, differentiation with respect to \(b\) gives 
  \[\partial_a\Psialpha(\halpha(b,t),b)\, \partial_b \halpha(b,t)+\partial_b\Psialpha(\halpha(b,t),b)=0.\]
  By \Cref{prop:galpha-properties}~\eqref{item:galpha-monotone-b}, the mapping \(b\mapsto \galpha(a,b)\) is nonincreasing, so \(\partial_b\Psialpha(a,b) = a\,\partial_b \galpha(a,b)\le0\).
  This proves the second identity in~\eqref{eq:dt-h-alpha}.
\end{proofEE}

Now we derive a lower bound on \(\halpha(b,t)\).

\begin{lemmaE}[][end]%
  \label{lem:hb-bound-ag}
  For fixed \(b > 0\), let \(\halpha(b,\cdot)\) be defined as in~\eqref{eq:def-h-alpha}.
  Then, for all \(t>0\),
  \[
    \frac{t}{
      1+\dfrac{L_1 t^\alpha}{(L_0+L_1 b^\alpha)^{1-\alpha}}
    }
    \le \halpha(b,t) \le t .
  \]
\end{lemmaE}

\begin{proofEE}
  Fix \(b > 0\) and \(t>0\).
  Since \(\galpha(\halpha(b,t),b) \ge 1\) according to \Cref{prop:galpha-properties} and \(t = \halpha(b,t) \galpha(\halpha(b,t),b)\), we immediately obtain \(\halpha(b,t) \le t\).
  For brevity, write \(\galpha\defeq \galpha(\halpha(b,t),b)\).

  Assume first that \(0\le\alpha<1\).
  Let \(\mu \defeq L_0+L_1b^\alpha, \nu \defeq L_1/\mu^{1-\alpha}\).
  Then \(\galpha^{1-\alpha}=1+(1-\alpha)\nu \halpha(b,t)^\alpha\), so
  \[
    1+\nu t^\alpha = 1+\nu \halpha(b,t)^\alpha \galpha^\alpha = 1+\frac{\galpha-\galpha^\alpha}{1-\alpha}.
  \]
  By the concavity of \(x\mapsto x^\alpha\), \(\galpha^\alpha \le \alpha \galpha+1-\alpha\), hence \(\galpha \le 1 + \nu t^\alpha\).
  Since \(t=\halpha(b,t) \galpha\), it follows that
  \[
    \halpha(b,t) \ge \frac{t}{1+\nu t^\alpha}.
  \]

  For \(\alpha=1\), \(\galpha =e^{L_1h}\) and \(t=he^{L_1h}\).
  Using \(e^x\le1+xe^x\) for \(x\ge0\) gives \(e^{L_1h}\le1+L_1t\), and therefore
  \[
    h=\frac{t}{e^{L_1h}}
    \ge
    \frac{t}{1+L_1t}.
  \]
\end{proofEE}

For every \(k\) such that \(\nabla f(x_k)\neq0\), define
\begin{equation*}
  \theta_k \defeq
  \frac{\Delta_k}{
    \halpha\left(
    \min_{j\le k}\|\nabla f(x_j)\|,
    \kdc(1-\eta_2)
    \psialpha\left(\min_{j\le k}\|\nabla f(x_j)\|\right)
    \right)
  }.
\end{equation*}
If \(\nabla f(x_k)=0\) for some \(k\), then \(x_k\) is already a first-order stationary point.
Otherwise, both arguments of \(\halpha\) are positive, so \(\theta_k\) is well defined by \Cref{lem:h-alpha-properties}.
Next, we establish that \(\{\theta_k\}_{k \in \N}\) is uniformly bounded below.

\begin{lemma}%
  \label{lem:gamma-min}
  Let \Cref{asm:problem-obj,asm:smooth,asm:Bk-L0-L1} hold.
  For all \(k \in \N\),
  \begin{equation*}
    \theta_k \ge \theta_{\min} \defeq \gamma_1 \min\left\{ 1, \, \frac{\Delta_0}{\halpha(\|\nabla f(x_0)\|, \kdc (1 - \eta_2)\psialpha(\|\nabla f(x_0)\|))} \right\} > 0.
  \end{equation*}
  Moreover, if \(\tl{0} \ge 1\) or \(\tl{1} \ge 1\), then by choosing \(\Delta_0 \ge \max\{1, \|\nabla f(x_0)\|\}\), we have \(\theta_{\min} = \gamma_1\).
\end{lemma}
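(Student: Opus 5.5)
Write \(\epsilon_k \defeq \min_{j\le k}\|\nabla f(x_j)\|\) and \(H_k \defeq \halpha(\epsilon_k, \kdc(1-\eta_2)\psialpha(\epsilon_k))\), so that \(\theta_k = \Delta_k/H_k\). The plan is an induction on \(k\). The mechanism is the usual trust-region one: whenever \(\Delta_k \le H_k\), iteration \(k\) is very successful, so the radius cannot shrink. The base case is immediate, since \(\theta_0 = \Delta_0/H_0 \ge \theta_{\min}\) because \(\gamma_1\le 1\).

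The key step is to show that \(\Delta_k \le H_k\) implies \(\rho_k \ge \eta_2\). Suppose \(\Delta_k \le H_k\).

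First, \(H_k \le \kdc(1-\eta_2)\psialpha(\epsilon_k) \le \psialpha(\|\nabla f(x_k)\|)\) by \Cref{lem:hb-bound-ag} (\(\halpha(b,t)\le t\)), using \(\kdc(1-\eta_2)<1\) and that \(\psialpha\) is nondecreasing with \(\epsilon_k \le \|\nabla f(x_k)\|\). So the minimum in \Cref{lem:sufficient-cauchy-decrease} is \(\Delta_k\), and \(m_k(0)-m_k(s_k) \ge \kdc\|\nabla f(x_k)\|\Delta_k\).

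Next, by \Cref{lem:model-error} with \(\|s_k\|\le\Delta_k\) and the monotonicity of \(\galpha\) in \(a\) (\Cref{prop:galpha-properties}),
\[
|1-\rho_k| \le \frac{(L_0+L_1\|\nabla f(x_k)\|^\alpha)\,\galpha(\Delta_k,\|\nabla f(x_k)\|)\,\Delta_k}{\kdc\|\nabla f(x_k)\|} = \frac{\Psialpha(\Delta_k,\|\nabla f(x_k)\|)}{\kdc\,\psialpha(\|\nabla f(x_k)\|)}.
\]
It then suffices to show \(\Psialpha(\Delta_k,\|\nabla f(x_k)\|) \le \kdc(1-\eta_2)\psialpha(\|\nabla f(x_k)\|)\). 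We have \(\Psialpha(\Delta_k,\|\nabla f(x_k)\|) \le \Psialpha(\Delta_k,\epsilon_k)\), since \(\galpha\) is nonincreasing in \(b\). Because \(\Psialpha(\cdot,\epsilon_k)\) is increasing and \(\Delta_k\le H_k\), this is at most \(\Psialpha(H_k,\epsilon_k)=\kdc(1-\eta_2)\psialpha(\epsilon_k)\), which is in turn at most \(\kdc(1-\eta_2)\psialpha(\|\nabla f(x_k)\|)\). Hence \(\rho_k\ge\eta_2\) and \(\Delta_{k+1}\ge\Delta_k\).

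The step I expect to be most delicate is the bookkeeping across iterations, because \(H_k\) changes with \(k\). Since \(\epsilon_{k+1}\le\epsilon_k\), \(\psialpha\) is nondecreasing, \(\halpha\) is increasing in \(t\) and nondecreasing in \(b\) (\Cref{lem:h-alpha-properties}), we get \(H_{k+1}\le H_k\). The sequence \(H_k\) is therefore nonincreasing, which works in our favour.

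The inductive step then splits into two cases:
\begin{itemize}
\item If \(\Delta_k\le H_k\), the iteration is very successful, so \(\theta_{k+1}\ge\Delta_k/H_k=\theta_k\ge\theta_{\min}\).
\item If \(\Delta_k> H_k\), then \(\Delta_{k+1}\ge\gamma_1\Delta_k>\gamma_1H_k\ge\gamma_1H_{k+1}\), so \(\theta_{k+1}\ge\gamma_1\ge\theta_{\min}\).
\end{itemize}

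For the final claim, assume \(\Delta_0\ge\max\{1,\|\nabla f(x_0)\|\}\) and \(L_0\ge1\) or \(L_1\ge1\). It suffices to show \(H_0\le\Delta_0\). By \Cref{lem:hb-bound-ag}, \(H_0\le\kdc(1-\eta_2)\psialpha(\|\nabla f(x_0)\|)\le\psialpha(\|\nabla f(x_0)\|)\). If \(L_0\ge1\), then \(\psialpha(t)\le t/L_0\le t\le\Delta_0\). If \(L_1\ge1\), then \(\psialpha(t)\le t^{1-\alpha}/L_1\le t^{1-\alpha}\le\max\{1,t\}\le\Delta_0\). In either case \(\Delta_0/H_0\ge1\), so \(\theta_{\min}=\gamma_1\).
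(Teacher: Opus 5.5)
Your proof is correct and follows essentially the paper's argument. It uses the same key step: a radius below the \(\halpha\)-threshold forces \(\rho_k\ge\eta_2\), via \Cref{lem:model-error}, the sharp Cauchy decrease and the monotonicity of \(\Psialpha\). It also uses the same monotonicity \(H_{k+1}\le H_k\) and the same check that \(\theta_{\min}=\gamma_1\).

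The only difference is packaging. You run a direct two-case induction with the threshold \(H_k\) built from \(\min_{j\le k}\|\nabla f(x_j)\|\). The paper instead argues by minimal counterexample with the threshold \(H_{k+1}\), which involves \(x_{k+1}\). Your version is therefore slightly cleaner, since it uses only quantities available at iteration \(k\).
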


\begin{proof}
  First, observe that for \(k = 0\),
  \begin{align*}
    \theta_0 = \frac{\Delta_0 }{\halpha(\|\nabla f(x_0)\|, \kdc (1 - \eta_2)\psialpha(\|\nabla f(x_0)\|))} &
    \ge \theta_{\min},
  \end{align*}
  because \(\gamma_1 \leq 1\).

  Suppose, for contradiction, that \(\theta_{k+1}<\theta_{\min}\) for some index, and let \(k\) be the smallest such index; by the case \(k=0\) above and minimality, \(\theta_k\ge\theta_{\min}\).
  By Line~\ref{alg:TR-GEN:step-update}, \(\gamma_1 \Delta_{k} \le \Delta_{k+1}\), which implies that,
  \begin{align*}
    \frac{\Delta_{k}}{\halpha(\min_{j\le k+1} \|\nabla f(x_j)\|, \kdc (1 - \eta_2)\psialpha(\min_{j\le k+1}\|\nabla f(x_j)\|))} & \le \frac{\theta_{k+1}}{\gamma_1} < \frac{ \theta_{\min}}{\gamma_1} \le 1.
  \end{align*}
  Applying the increasing mapping \(\Psialpha(\cdot,\min_{j\le k+1}\|\nabla f(x_j)\|)\) to both sides and using the definition of \(\halpha\), we obtain
  \begin{align}
    \Psialpha(\Delta_k, \min_{j\le k+1} \|\nabla f(x_j)\|) & < \kdc (1 - \eta_2) \psialpha(\min_{j\le k+1}\|\nabla f(x_j)\|) \nonumber
    \\
    \label{eq:contradiction}
                                                               & \underset{(a)}{\le}  \kdc (1 - \eta_2)\psialpha(\|\nabla f(x_k)\|) \nonumber
    \\
                                                               & \underset{(b)}{\le} \psialpha(\|\nabla f(x_k)\|),
  \end{align}
  where (a) follows from the fact that \(\psialpha\) is increasing and (b) follows from the fact that \(\kdc (1 - \eta_2) \leq 1\).

  On the other hand, it follows from the definition of \(\rho_k\),
  \begin{align*}
    |\rho_k -1| & = \frac{|f(x_k + s_k) - m_k(s_k)|}{m_k(0) - m_k(s_k)}                                                                                                                                                            \\
                & \underset{(a)}{\le} \frac{ (\tl{0} + \tl{1} \|\nabla f(x_k)\|^\alpha) \galpha(\|s_k\|, \|\nabla f(x_k)\|) \|s_k\|^2}{\kdc \|\nabla f(x_k)\| \min \left\{\psialpha(\|\nabla f(x_k)\|), \, \Delta_k \right\}} \\
                & \underset{(b)}{=} \frac{ (\tl{0} + \tl{1} \|\nabla f(x_k)\|^\alpha) \Psialpha(\|s_k\|, \|\nabla f(x_k)\|) \|s_k\|}{\kdc \|\nabla f(x_k)\| \Delta_k}                                                          \\
                & \underset{(c)}{\le} \frac{ \Psialpha(\Delta_k, \min_{j\le k+1} \|\nabla f(x_j)\|)}{\kdc \psialpha(\|\nabla f(x_k)\|)} \underset{(d)}{\le} 1 - \eta_2,
  \end{align*}
  Here, (a) follows from \Cref{lem:model-error} together with the sharp form~\eqref{eq:cauchy-sharp} of the Cauchy decrease.
  Step (b) uses the definition \(\Psialpha(a,b)=a\,\galpha(a,b)\) in the numerator, and the identity \(\min\{\psialpha(\|\nabla f(x_k)\|),\Delta_k\}=\Delta_k\) in the denominator.
  The latter holds because \(\galpha\ge1\) by \Cref{prop:galpha-properties} (5), so that, writing \(b_{k+1}\defeq\min_{j\le k+1}\|\nabla f(x_j)\|\),
  \[
    \Delta_k
    \le
    \Delta_k\,\galpha(\Delta_k,b_{k+1})
    =
    \Psialpha(\Delta_k,b_{k+1})
    \underset{\eqref{eq:contradiction}}{<}
    \psialpha(\|\nabla f(x_k)\|).
  \]
  Step (c) follows from \(\|s_k\| \le \Delta_k\) together with the fact that \(\Psialpha\) is increasing with respect to its first argument and nonincreasing with respect to its second argument, and from the identity \((\tl{0}+\tl{1}\|\nabla f(x_k)\|^\alpha)/\|\nabla f(x_k)\| = 1/\psialpha(\|\nabla f(x_k)\|)\).
  Finally, (d) follows from inequality~\eqref{eq:contradiction}.
  Therefore, \(\rho_k \ge \eta_2\), which implies that iteration \(k\) is very successful, and \(\Delta_{k+1} \ge \Delta_k\).
  Thus, by the monotonicity of \(\halpha\) established in \Cref{lem:h-alpha-properties} and of \(\psialpha\), we have
  \begin{align*}
     & \halpha(\min_{j\le k+1} \|\nabla f(x_j)\|, \kdc (1 - \eta_2)\psialpha(\min_{j\le k+1}\|\nabla f(x_j)\|))  \\
     & \le \halpha(\min_{j\le k} \|\nabla f(x_j)\|, \kdc (1 - \eta_2)\psialpha(\min_{j\le k}\|\nabla f(x_j)\|)).
  \end{align*}
  Hence, \(\theta_{k} \le \theta_{k+1} < \theta_{\min}\), which contradicts the assumption that \(\theta_k \ge \theta_{\min}\).

  For the last statement, by \Cref{lem:hb-bound-ag},
  \begin{align*}
    \halpha\!\left(\|\nabla f(x_0)\|,\kdc(1-\eta_2)\psialpha(\|\nabla f(x_0)\|)\right)
     & \le
    \kdc(1-\eta_2)\psialpha(\|\nabla f(x_0)\|) \\
     & \le
    \psialpha(\|\nabla f(x_0)\|)               \\
     & \le
    \max\{1,\|\nabla f(x_0)\|\},
  \end{align*}
  Then, if \(\tl{0}\ge1\), then \(\psialpha(\|\nabla f(x_0)\|)\le\|\nabla f(x_0)\|/\tl{0}\le\|\nabla f(x_0)\|\).
  If instead \(\tl{1}\ge1\), then \(\psialpha(\|\nabla f(x_0)\|)\le\|\nabla f(x_0)\|^{1-\alpha}/\tl{1}\le\|\nabla f(x_0)\|^{1-\alpha}\le\max\{1,\|\nabla f(x_0)\|\}\), since \(1-\alpha\in[0,1]\).

  Hence, if \(\Delta_0\ge\max\{1,\|\nabla f(x_0)\|\}\), then \(\theta_{\min}=\gamma_1\). 
\end{proof}

In general, \(\theta_{\min}\) may depend on the initial gradient norm \(\|\nabla f(x_0)\|\).
This dependence can be avoided by choosing \(\Delta_0\) sufficiently large, as indicated in \Cref{lem:gamma-min}.

The following lemma provides a model-decrease estimate.

\begin{lemma}%
  \label{lem:sufficient-decrease-general}
  Let \Cref{asm:problem-obj,asm:smooth,asm:Bk-L0-L1} hold.
  For all \(k \in \N\),
  \begin{align*}
     & m_{k}(0)-m_{k}\left(s_{k}\right) \ge                                                                                                                      \\
     & \quad \kdc \|\nabla f(x_k)\|  \halpha(\min_{j\le k} \|\nabla f(x_j)\|, \kdc (1 - \eta_2)\psialpha(\min_{j \le k}\|\nabla f(x_j)\|)) \theta_{\min}.
  \end{align*}
\end{lemma}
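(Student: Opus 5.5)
Write \(b_k\defeq\min_{j\le k}\|\nabla f(x_j)\|\) and \(H_k\defeq\halpha\bigl(b_k,\kdc(1-\eta_2)\psialpha(b_k)\bigr)\), so that \(\theta_k=\Delta_k/H_k\). The plan is to start from the Cauchy decrease of \Cref{lem:sufficient-cauchy-decrease},
\[
  m_k(0)-m_k(s_k)\ge \kdc\|\nabla f(x_k)\|\min\{\psialpha(\|\nabla f(x_k)\|),\Delta_k\},
\]
and bound each term of the minimum below by \(H_k\theta_{\min}\). We may assume \(\nabla f(x_k)\neq0\); otherwise the right-hand side is zero and the claim is trivial.

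\textbf{Radius term.} \Cref{lem:gamma-min} gives \(\theta_k\ge\theta_{\min}\), that is, \(\Delta_k=\theta_kH_k\ge\theta_{\min}H_k\).

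\textbf{First term.} Since \(\psialpha\) is nondecreasing and \(b_k\le\|\nabla f(x_k)\|\), we have \(\psialpha(\|\nabla f(x_k)\|)\ge\psialpha(b_k)\). The upper bound \(\halpha(b,t)\le t\) of \Cref{lem:hb-bound-ag} then yields
\[
  H_k\le\kdc(1-\eta_2)\psialpha(b_k)\le\psialpha(b_k),
\]
because \(\kdc\le\tfrac12\) and \(\eta_2<1\). Next, \(\theta_{\min}\le\gamma_1<1\) by its definition in \Cref{lem:gamma-min}. Together these give \(\psialpha(\|\nabla f(x_k)\|)\ge H_k\ge H_k\theta_{\min}\).

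\textbf{Conclusion.} Combining the two bounds, \(\min\{\psialpha(\|\nabla f(x_k)\|),\Delta_k\}\ge H_k\theta_{\min}\), and multiplying by \(\kdc\|\nabla f(x_k)\|\) gives the claim.

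There is no real obstacle. The only points needing care are monotonicity in the correct direction (\(\psialpha\) evaluated at the running minimum \(b_k\) rather than at the current gradient norm) and the fact that \(\theta_{\min}\le1\), which allows the factor \(\theta_{\min}\) to be attached to both terms of the minimum.
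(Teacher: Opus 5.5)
Your proof is correct and follows the paper's argument essentially verbatim. You start from \Cref{lem:sufficient-cauchy-decrease}, lower-bound the first term of the minimum via monotonicity of \(\psialpha\), \(\halpha(b,t)\le t\) and \(\theta_{\min}\le1\), and lower-bound the radius term by \(\Delta_k\ge\theta_{\min}H_k\) from \Cref{lem:gamma-min}.

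One cosmetic quibble: when \(\nabla f(x_k)=0\), the right-hand side is not zero but undefined, because \(\halpha\) is only defined on \((0,\infty)^2\). This is why the paper excludes that case outright rather than treating it as trivial.
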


\begin{proof}
  \Cref{lem:sufficient-cauchy-decrease} gives
  \begin{align*}
     & m_{k}(0)-m_{k}\left(s_{k}\right)  \ge  \kdc \|\nabla f(x_k)\| \min \left\{\psialpha(\|\nabla f(x_k)\|)\, , \, \Delta_k \right\}                                                                   \\
     & \underset{(a)}{\ge}  \kdc \|\nabla f(x_k)\| \min \left\{\kdc (1 - \eta_2)\psialpha(\min_{j\le k}\|\nabla f(x_j)\|)\, , \, \Delta_k \right\}                                                       \\
     & \underset{(b)}{\ge}  \kdc \|\nabla f(x_k)\| \min \left\{\halpha(\min_{j\le k} \|\nabla f(x_j)\|, \kdc (1 - \eta_2)\psialpha(\min_{j \le k}\|\nabla f(x_j)\|)) \theta_{\min}, \Delta_k \right\} \\
     & \underset{(c)}{=}  \kdc \|\nabla f(x_k)\| \halpha(\min_{j\le k} \|\nabla f(x_j)\|,\kdc (1 - \eta_2)\psialpha(\min_{j\le k}\|\nabla f(x_j)\|)) \theta_{\min},
  \end{align*}
  where (a) follows from the fact that \(\kdc (1 - \eta_2) \leq 1\) and that \(\psialpha\) is nondecreasing, (b) follows from \(\halpha(b,t)\le t\) and \(\theta_{\min}\le1\), and (c) follows from \Cref{lem:gamma-min}.
\end{proof}

The lower bound in \Cref{lem:sufficient-decrease-general} is key to the complexity analysis in \Cref{sec:complexity-general}.

If \Cref{alg:TR-GEN} has finitely many successful iterations, it identifies a first-order critical point in finite time.

\begin{theorem}
  Let \Cref{asm:problem-obj,asm:smooth,asm:Bk-L0-L1} hold.
  If \Cref{alg:TR-GEN} generates finitely many successful iterations, then \(x_k = x^*\) for all sufficiently large \(k\), and \(\nabla f(x^*) = 0\).
\end{theorem}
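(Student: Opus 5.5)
Suppose there are finitely many successful iterations, and let \(k_0\) be the last one (or \(k_0=-1\) if there are none). Then for every \(k > k_0\) the iteration is unsuccessful, so \(x_k = x_{k_0+1} \eqqcolon x^*\) for all \(k\ge k_0+1\). By Line~\ref{alg:TR-GEN:step-update}, \(\Delta_{k+1}\le\gamma_2\Delta_k\) for all such \(k\), hence \(\Delta_k\to0\). The goal is to show that \(\nabla f(x^*)=0\), which I will do by contradiction.

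Assume \(\nabla f(x^*)\neq0\). Since the iterates are frozen at \(x^*\) for \(k>k_0\), the quantity \(b\defeq\min_{j\le k}\|\nabla f(x_j)\|\) is constant for \(k\ge k_0+1\), and it is positive. Indeed, it is a minimum over finitely many distinct iterates, each with nonzero gradient; otherwise the algorithm would already have hit a stationary point, and then \(\theta_k\) would not be needed. Consequently the denominator in the definition of \(\theta_k\),
\[
\halpha\bigl(b,\kdc(1-\eta_2)\psialpha(b)\bigr),
\]
is a fixed positive number for all \(k\ge k_0+1\), by \Cref{lem:h-alpha-properties}. Since \(\Delta_k\to0\), this gives \(\theta_k\to0\), which contradicts \Cref{lem:gamma-min}, where \(\theta_k\ge\theta_{\min}>0\).

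As an alternative route that avoids \(\theta_k\), one can argue directly. Once \(\Delta_k\) is small enough that \(\Psialpha(\Delta_k,\|\nabla f(x^*)\|)\le\kdc(1-\eta_2)\psialpha(\|\nabla f(x^*)\|)\), the chain (a)–(d) in the proof of \Cref{lem:gamma-min} yields \(|\rho_k-1|\le1-\eta_2\). Hence \(\rho_k\ge\eta_2\ge\eta_1\), so iteration \(k\) is successful, again a contradiction.

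The main subtlety, and the step needing the most care, is handling the case where some earlier iterate \(x_j\) with \(j\le k_0\) already has \(\nabla f(x_j)=0\). In that case \(\theta_k\) is undefined. I would avoid the issue by using the direct argument with \(\|\nabla f(x^*)\|\) in place of the running minimum. That argument relies only on \Cref{lem:model-error}, \Cref{lem:sufficient-cauchy-decrease}, the monotonicity of \(\Psialpha\) in its first argument, and \(\lim_{a\downarrow0}\Psialpha(a,b)=0\) from \Cref{lem:h-alpha-properties}, so it is self-contained.
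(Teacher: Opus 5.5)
Your main argument is the paper's: the tail of unsuccessful iterations forces \(\Delta_k\to0\), while \Cref{lem:gamma-min} keeps \(\Delta_k\ge\theta_{\min}\halpha\bigl(\nu,\kdc(1-\eta_2)\psialpha(\nu)\bigr)>0\) once all gradients are bounded below by some \(\nu>0\); your phrasing as \(\theta_k\to0\) is the same contradiction. Your direct alternative, which uses \(\|\nabla f(x^*)\|\) instead of the running minimum, is also correct, and it closes the edge case of an earlier iterate with zero gradient, which the paper's proof excludes only tacitly by assuming \(\|\nabla f(x_k)\|\ge\nu\) for all \(k\).
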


\begin{proof}
  Let \(k_f\) be the last successful iteration. Then \(x_k = x_{k_f +1}\) for all \(k \geq k_f + 1\). Set \(x^* \defeq x_{k_f + 1}\).
  Suppose, by contradiction, that \(\|\nabla f(x_k)\| \geq \nu > 0\) for all \(k \in \N\).
  By \Cref{lem:gamma-min} and the monotonicity of \(\halpha\) and \(\psialpha\), 
  \begin{equation*}
    \Delta_{k}
    \ge
    \theta_{\min} \halpha\bigl(\nu, \kdc(1-\eta_2)\psialpha(\nu)\bigr).
  \end{equation*}
  Thus, \(\Delta_k\) is bounded away from zero for all \(k \geq k_f + 1\). 
  On the other hand, all iterations after \(k_f + 1\) are unsuccessful, so the update mechanism of \Cref{alg:TR-GEN} yields \(\Delta_k \to 0\), a contradiction.
  Hence, \(\liminf_{k \to \infty} \|\nabla f(x_k)\| = \|\nabla f(x^*)\| = 0\).
\end{proof}

\subsection{Complexity on general objectives}%
\label{sec:complexity-general}

Let \(0 < \epsilon < 1\) and \(k_{\epsilon}\) be the first iteration of \Cref{alg:TR-GEN} with \(\|\nabla f(x_{k_{\epsilon}})\| > \epsilon\), and \(\|\nabla f(x_{k_{\epsilon} + 1})\| \le \epsilon\).
Define
\begin{align*}
  \mathcal{S}(\epsilon) & \defeq \mathcal{S}_{k_{\epsilon}-1} = \{ k \in \mathcal{S} \mid k < k_{\epsilon} \}.
\end{align*}

\begin{theorem}%
  \label{thm:complexity:S}
  Let \Cref{asm:problem-obj,asm:smooth,asm:Bk-L0-L1} hold.
  Assume that \Cref{alg:TR-GEN} generates infinitely many successful iterations.
  Let  \(\theta_{\min}\) be as in \Cref{lem:gamma-min}.
  Define
  \begin{align}%
    \label{eq:kappa1-kappa2-Delta-general}
    \kappa_{0} & \defeq  \frac{f(x_0) - f_{\textup{low}}}{\eta_1 \kdc^2 (1 - \eta_2) \theta_{\min}}.
  \end{align}
  Then, for all \(\alpha \in [0,1]\),
  \begin{equation}
    \label{eq:S-eps0}
    |\mathcal{S} (\epsilon)| \leq \tl{1} \left(1 + (\kdc (1 - \eta_2))^\alpha\right) \kappa_0 \; \epsilon^{\alpha-2} +  \tl{0} \, \kappa_0 \; \epsilon^{-2}.
  \end{equation}
\end{theorem}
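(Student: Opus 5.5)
The plan is a standard telescoping argument over the successful iterations in \(\mathcal{S}(\epsilon)\), driven by the decrease bound of \Cref{lem:sufficient-decrease-general}. The one new ingredient is the lower bound on \(\halpha\) from \Cref{lem:hb-bound-ag}.

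\textbf{Step 1: telescoping.} For \(k\in\mathcal{S}(\epsilon)\) we have \(k<k_\epsilon\). Since \(k_\epsilon\) is the first iteration with \(\|\nabla f(x_{k_\epsilon+1})\|\le\epsilon\), we get \(\|\nabla f(x_j)\|>\epsilon\) for all \(j\le k\). Hence \(b_k\defeq\min_{j\le k}\|\nabla f(x_j)\|>\epsilon\). Acceptance gives
\[
f(x_k)-f(x_{k+1})\ge\eta_1\bigl(m_k(0)-m_k(s_k)\bigr).
\]
Unsuccessful iterations leave \(x\) unchanged, so summing over \(\mathcal{S}(\epsilon)\) yields
\[
f(x_0)-f_{\rm low}\ge\eta_1\sum_{k\in\mathcal{S}(\epsilon)}\bigl(m_k(0)-m_k(s_k)\bigr).
\]
It therefore suffices to bound each model decrease below by a uniform quantity depending only on \(\epsilon\).

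\textbf{Step 2: uniform per-iteration decrease.} By \Cref{lem:sufficient-decrease-general},
\[
m_k(0)-m_k(s_k)\ge\kdc\,\epsilon\,\theta_{\min}\,\halpha\bigl(b_k,\tau\psialpha(b_k)\bigr),
\qquad \tau\defeq\kdc(1-\eta_2).
\]
Since \(\halpha\) is nondecreasing in \(b\) and increasing in \(t\) (\Cref{lem:h-alpha-properties}), and \(\psialpha\) is nondecreasing, the \(\halpha\) factor is at least \(\halpha(\epsilon,\tau\psialpha(\epsilon))\). Applying \Cref{lem:hb-bound-ag} with \(b=\epsilon\) and \(t=\tau\psialpha(\epsilon)\) gives
\[
\halpha \ge \frac{t}{1+L_1t^\alpha/(L_0+L_1\epsilon^\alpha)^{1-\alpha}}.
\]

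\textbf{Step 3: simplifying the denominator.} This is the main obstacle, since the naive bound loses a factor. Write \(D\defeq L_0+L_1\epsilon^\alpha\), so \(t=\tau\epsilon/D\). Then
\[
\frac{L_1t^\alpha}{D^{1-\alpha}}=\frac{L_1\tau^\alpha\epsilon^\alpha}{D},
\]
and therefore
\[
\halpha\ge\frac{\tau\epsilon}{D+L_1\tau^\alpha\epsilon^\alpha}=\frac{\tau\epsilon}{L_0+L_1(1+\tau^\alpha)\epsilon^\alpha}.
\]
For \(\alpha=1\) the same computation holds with \(t/(1+L_1t)\), giving \(D+L_1\tau\epsilon\). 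This is consistent with the exponent \(\alpha=1\), so no case split is really needed beyond checking that formula.

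\textbf{Step 4: combining.} Each successful iteration in \(\mathcal{S}(\epsilon)\) therefore decreases \(f\) by at least
\[
\frac{\eta_1\kdc^2(1-\eta_2)\theta_{\min}\,\epsilon^2}{L_0+L_1(1+\tau^\alpha)\epsilon^\alpha}.
\]
Hence
\[
|\mathcal{S}(\epsilon)|\le\kappa_0\,\epsilon^{-2}\bigl(L_0+L_1(1+\tau^\alpha)\epsilon^\alpha\bigr),
\]
which is exactly~\eqref{eq:S-eps0}.
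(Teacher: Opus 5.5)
Your proposal is correct and follows the paper's proof step for step. You use the decrease bound of \Cref{lem:sufficient-decrease-general}, monotonicity of \(\halpha\) and \(\psialpha\) to replace the gradient norms by \(\epsilon\), \Cref{lem:hb-bound-ag} with the simplification \(L_1 t^\alpha/D^{1-\alpha} = L_1\tau^\alpha\epsilon^\alpha/D\) (which is in fact an equality, though the paper writes it as an inequality), and telescoping of \(f\) over \(\mathcal{S}(\epsilon)\).
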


\begin{proof}
  If \(\mathcal{S}(\epsilon) = \varnothing \), then the theorem holds trivially as \(|\mathcal{S}(\epsilon)| = 0\).

  Otherwise, let \(\ell \in \mathcal{S}(\epsilon)\).
  By \Cref{lem:sufficient-decrease-general}, we have
  \begin{align*}
     & f(x_{\ell}) - f(x_{\ell} + s_{\ell}) \geq \eta_1 \left(m_{\ell}(0) - m_{\ell}(s_{\ell})\right)                                                                                \\
     & \quad \geq \eta_1 \kdc \|\nabla f(x_\ell)\|  \halpha(\min_{j\le \ell} \|\nabla f(x_j)\|, \kdc (1 - \eta_2)\psialpha(\min_{j \le \ell}\|\nabla f(x_j)\|)) \theta_{\min} \\
     & \quad \underset{(a)}{\geq} \eta_1 \kdc \epsilon  \halpha(\epsilon, \kdc (1 - \eta_2)\psialpha(\epsilon)) \theta_{\min},
  \end{align*}
  where (a) follows from the fact that \(\psialpha\) is increasing and so is \(\halpha\) with respect to both of its arguments, and that \(\|\nabla f(x_j)\| \geq \epsilon\) for all \(j = 0, \ldots, \ell\).
  Using \Cref{lem:hb-bound-ag},
  \begin{align*}
    f(x_{\ell}) - f(x_{\ell} + s_{\ell}) & \geq \eta_1 \kdc \epsilon  \frac{\dfrac{\kdc (1 - \eta_2)\epsilon}{\tl{0} + \tl{1} \epsilon^\alpha}}{1 + \dfrac{\tl{1} (\kdc (1 - \eta_2) \epsilon/(\tl{0} + \tl{1} \epsilon^\alpha))^\alpha}{(\tl{0} + \tl{1} \epsilon^\alpha)^{\,1-\alpha}}} \theta_{\min} \\
                                         & \geq  \eta_1 \kdc^2 \epsilon^2 \theta_{\min} (1 - \eta_2) \frac{1}{\tl{0} + \tl{1} \epsilon^\alpha + \tl{1} (\kdc (1 - \eta_2) \epsilon)^\alpha},
  \end{align*}

  We now sum the above inequality over \(\ell \in \mathcal{S}(\epsilon)\).
  The sequence \(\{f(x_k)\}\) is nonincreasing over all iterations, since \(x_{k+1}=x_k\) at an unsuccessful iteration and \(f(x_{k+1})<f(x_k)\) at a successful one.
  Hence the decreases \(f(x_\ell)-f(x_{\ell+1})\), \(\ell\in\mathcal{S}(\epsilon)\), telescope into at most \(f(x_0)-f_{\textup{low}}\), and we obtain
  \begin{align*}
    f(x_0) - f_{\textup{low}} & \geq \frac{ \eta_1 \kdc^2 \epsilon^2 \theta_{\min} (1 - \eta_2) |\mathcal{S}(\epsilon)|}{ \tl{0} + \tl{1} (1 + (\kdc (1 - \eta_2))^\alpha) \epsilon^\alpha},
  \end{align*}
  which gives the desired result.
\end{proof}

Several remarks on \Cref{thm:complexity:S} are in order.
Firstly, the constants and the bound are well-defined for every \(\alpha \in [0,1]\), where \(\alpha\) is the parameter in \Cref{asm:smooth}.
Secondly, the bound in \Cref{thm:complexity:S} recovers the best known gradient-descent complexity bound without requiring any estimate of the problem parameters; see \Cref{tab:L0L1-complexity}.
Thirdly, the bound in \Cref{thm:complexity:S} is of order \(O(\epsilon^{-2})\) when \(\alpha = 0\) and of order \(O(\tl{0}\epsilon^{-2} + \tl{1}\epsilon^{-1})\) when \(\alpha = 1\), which matches the best known dependence under \((1,L_0,L_1)\)-smoothness.

\Cref{thm:complexity:S} bounds the number of \emph{successful} iterations.
The total number of iterations, including unsuccessful ones, is bounded in the following theorem.

\begin{theorem}%
  \label{thm:total-iterations}
  Let the assumptions of \Cref{thm:complexity:S} hold, assume
  \(k_\epsilon \ge 1\), and let
  \[
    c_\alpha
    \defeq
    \frac{a}{
      1+\dfrac{L_1 a^\alpha}
      {(L_0+L_1)^{1-\alpha}}
    },
    \qquad
    a \defeq \frac{\kdc(1-\eta_2)}{\tl{0}+\tl{1}},
    \qquad
    \kappa_0
    \defeq
    \frac{f(x_0)-f_{\mathrm{low}}}
    {\eta_1\kdc^2(1-\eta_2)\theta_{\min}}.
  \]
  Then,
  \begin{align*}
    k_\epsilon
    \le{}&
    \bigl(1+|\log_{\gamma_2}(\gamma_3)|\bigr)
    \kappa_0
    \left[
      \tl{1}
      \left(1+(\kdc(1-\eta_2))^\alpha\right)
      \epsilon^{\alpha-2}
      +
      \tl{0}\epsilon^{-2}
    \right]
    \\
    &+
      \log_{\gamma_2}\left(
        \frac{\theta_{\min}c_\alpha\epsilon}{\Delta_0}
      \right).
  \end{align*}
\end{theorem}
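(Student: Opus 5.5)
The plan is to split the first \(k_\epsilon\) iterations into successful and unsuccessful ones and bound each part with the tools already available. Since iterations \(0,\dots,k_\epsilon-1\) comprise \(\mathcal S_{k_\epsilon-1}=\mathcal S(\epsilon)\) and \(\mathcal U_{k_\epsilon-1}\), we have \(k_\epsilon = |\mathcal S(\epsilon)|+|\mathcal U_{k_\epsilon-1}|\). We then apply \Cref{lem:bound-unsuccessful-iterations} with \(k=k_\epsilon\ge1\), which gives
\[
k_\epsilon \le \bigl(1+|\log_{\gamma_2}(\gamma_3)|\bigr)|\mathcal S(\epsilon)| + \log_{\gamma_2}\!\left(\frac{\Delta_{k_\epsilon}}{\Delta_0}\right).
\]
Substituting \eqref{eq:S-eps0} from \Cref{thm:complexity:S} yields the first bracketed term. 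It then remains to show \(\log_{\gamma_2}(\Delta_{k_\epsilon}/\Delta_0)\le \log_{\gamma_2}(\theta_{\min}c_\alpha\epsilon/\Delta_0)\). Because \(\gamma_2<1\), \(\log_{\gamma_2}\) is decreasing, so this is equivalent to the lower bound \(\Delta_{k_\epsilon}\ge \theta_{\min}c_\alpha\epsilon\).

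To get this radius bound, we use \Cref{lem:gamma-min}: \(\Delta_{k_\epsilon}\ge\theta_{\min}\,\halpha(b,\kdc(1-\eta_2)\psialpha(b))\), where \(b\defeq\min_{j\le k_\epsilon}\|\nabla f(x_j)\|\). By the definition of \(k_\epsilon\), all \(\|\nabla f(x_j)\|>\epsilon\) for \(j\le k_\epsilon\), so \(b\ge\epsilon\). Monotonicity of \(\halpha\) in both arguments (\Cref{lem:h-alpha-properties}) and of \(\psialpha\) then gives \(\Delta_{k_\epsilon}\ge\theta_{\min}\halpha(\epsilon,t_\epsilon)\) with \(t_\epsilon\defeq\kdc(1-\eta_2)\psialpha(\epsilon)\).

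Next we apply \Cref{lem:hb-bound-ag}: \(\halpha(\epsilon,t_\epsilon)\ge t_\epsilon/(1+L_1t_\epsilon^\alpha/(L_0+L_1\epsilon^\alpha)^{1-\alpha})\). Since \(\epsilon<1\), we have \(\psialpha(\epsilon)=\epsilon/(L_0+L_1\epsilon^\alpha)\ge \epsilon/(L_0+L_1)\), and hence \(t_\epsilon\ge a\epsilon\). The map \(t\mapsto t/(1+\nu t^\alpha)\) is nondecreasing for \(\alpha\le1\), since its derivative has sign \(1+(1-\alpha)\nu t^\alpha>0\). Moreover, the denominator \(1+L_1 t^\alpha/(L_0+L_1\epsilon^\alpha)^{1-\alpha}\) is largest when \(L_0+L_1\epsilon^\alpha\) is smallest. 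This is the delicate step. Replacing \(\epsilon^\alpha\) by \(1\) in that denominator goes the wrong way, and \(t_\epsilon^\alpha\) need not be bounded by \(a^\alpha\) times something convenient. The route I would try first is to pull the factor \(\epsilon\) out directly. Using \(\epsilon^\alpha\le1\) to bound the numerator of the fraction, \(t_\epsilon^\alpha=(\kdc(1-\eta_2))^\alpha\epsilon^\alpha/(L_0+L_1\epsilon^\alpha)^\alpha\), and combining it with \((L_0+L_1\epsilon^\alpha)^{1-\alpha}\) gives
\[
\frac{L_1t_\epsilon^\alpha}{(L_0+L_1\epsilon^\alpha)^{1-\alpha}}=\frac{L_1(\kdc(1-\eta_2))^\alpha\epsilon^\alpha}{L_0+L_1\epsilon^\alpha}\le \frac{L_1(\kdc(1-\eta_2))^\alpha}{L_0+L_1}\cdot\frac{(L_0+L_1)\epsilon^\alpha}{L_0+L_1\epsilon^\alpha}.
\]
The last factor is at most \(1\) because \(\epsilon<1\). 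This yields \(\halpha(\epsilon,t_\epsilon)\ge a\epsilon/(1+L_1a^\alpha/(L_0+L_1)^{1-\alpha})=c_\alpha\epsilon\), noting that \(L_1(\kdc(1-\eta_2))^\alpha/(L_0+L_1)=L_1a^\alpha/(L_0+L_1)^{1-\alpha}\).

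Once \(\Delta_{k_\epsilon}\ge\theta_{\min}c_\alpha\epsilon\) is in hand, we substitute it into the logarithmic term and obtain the stated bound. The remaining cases should be checked separately. For \(\alpha=0\), the convention \(0^0=1\) makes the same algebra work. For \(\alpha=1\), the lower bound \(t/(1+L_1t)\) from \Cref{lem:hb-bound-ag} coincides with the general formula, so the argument goes through unchanged.
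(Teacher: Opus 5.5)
Your proposal is correct and follows the paper's proof step for step. You decompose \(k_\epsilon=|\mathcal S(\epsilon)|+|\mathcal U_{k_\epsilon-1}|\), apply \Cref{lem:bound-unsuccessful-iterations} and \Cref{thm:complexity:S}, and reduce the logarithmic term to \(\Delta_{k_\epsilon}\ge\theta_{\min}c_\alpha\epsilon\) via \Cref{lem:gamma-min}, the monotonicity of \(\halpha\) and \(\psialpha\), and \Cref{lem:hb-bound-ag}. The paper states that last inequality without detail; you verify it explicitly through the identity \(\tl{1}t_\epsilon^\alpha/(\tl{0}+\tl{1}\epsilon^\alpha)^{1-\alpha}=\tl{1}(\kdc(1-\eta_2))^\alpha\epsilon^\alpha/(\tl{0}+\tl{1}\epsilon^\alpha)\le\tl{1}a^\alpha/(\tl{0}+\tl{1})^{1-\alpha}\) together with \(t_\epsilon\ge a\epsilon\), which is a valid completion of that step.
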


\begin{proof}
  Since the iterations \(0,\ldots,k_\epsilon-1\) are either successful
  or unsuccessful,
  \(
    k_\epsilon
    =
    |\mathcal{S}(\epsilon)|
    +
    |\mathcal{U}_{k_\epsilon-1}|.
  \)
  By \Cref{lem:bound-unsuccessful-iterations},
  \[
    k_\epsilon
    \le
    \bigl(1+|\log_{\gamma_2}(\gamma_3)|\bigr)
    |\mathcal{S}(\epsilon)|
    +
      \log_{\gamma_2}\left(
        \frac{\Delta_{k_\epsilon}}{\Delta_0}
      \right).
  \]

  Hence, by \Cref{lem:gamma-min}, the fact that \(\min_{j\le k_\epsilon}\|\nabla f(x_j)\|\ge\epsilon\) and the monotonicity of \(\halpha\) and \(\psialpha\),
  \[
    \Delta_{k_\epsilon}
    \ge
    \theta_{\min}
    \halpha\bigl(
      \epsilon,
      \kdc(1-\eta_2)\psialpha(\epsilon)
    \bigr).
  \]
  Moreover, because \(\epsilon<1\), we have
  \[
    \psialpha(\epsilon)
    \ge
    \frac{\epsilon}{\tl{0}+\tl{1}},
  \]
  and therefore, by \Cref{lem:hb-bound-ag},
  \[
    \halpha\bigl(
      \epsilon,
      \kdc(1-\eta_2)\psialpha(\epsilon)
    \bigr)
    \ge
    c_\alpha\epsilon.
  \]
  The result now follows by the decreasing property of \(\log_{\gamma_2}\), since \(\gamma_2\in(0,1)\), and \Cref{thm:complexity:S}.
\end{proof}



\subsection{Complexity for convex and strongly convex objectives}%
\label{sec:convex-S}

In this section, we derive complexity results of \Cref{alg:TR-GEN} for both convex and strongly convex functions under the \((\alpha ,L_0, L_1)\)-smoothness assumption.

Convexity leads to improved complexity bounds.
We focus on successful iterations, since  \Cref{lem:bound-unsuccessful-iterations} then provides a bound on the unsuccessful iterations.

For the convex case, we consider the optimality measure

\begin{equation}%
  \label{eq:Dk}
  \delta_k \defeq f(x_k) - f_{\rm low},
\end{equation}
where \(f_{\rm low}\) is the infimum of \Cref{asm:problem-obj}.

Let \(0 < \epsilon < 1\) and, since \(\{\delta_k\}_{k \in \N}\) is nonincreasing, let \(\widehat{k}_{\epsilon}\) be the first iteration of \Cref{alg:TR-GEN} with \(\delta_{\widehat{k}_{\epsilon}} > \epsilon\), and \(\delta_{\widehat{k}_{\epsilon}+1}\le\epsilon\).
Define
\begin{align*}
  \widehat{\mathcal{S}}(\epsilon) & \defeq \mathcal{S}_{\widehat{k}_{\epsilon}-1} = \{ k \in \mathcal{S} \mid k < \widehat{k}_{\epsilon} \}.
\end{align*}

We impose the following condition, which is standard in the analysis of convex optimization algorithms, and is also used in \citep{cartis-gould-toint-2012}.

\begin{problemassumption}%
  \label{asm:convex}
  The function \(f\) is convex, and there exists \(R > 0\) such that \(\|x - x^*\| \le R\), for every \(x\) satisfying \(f(x) \le f(x_0)\), where \(x^*\) is a minimizer of \(f\).
\end{problemassumption}

The following property from \citep{cartis-gould-toint-2012} relates the objective gap to the gradient norm.

\begin{lemma}[{\citealp[Lemma~\(2.4\)]{cartis-gould-toint-2012}}]%
  \label{lem:convex}
  Assume that \Cref{asm:convex} holds.
  Then, for all \(k \in \N \), \(\delta_k \le R \| \nabla f(x_k) \|\).
\end{lemma}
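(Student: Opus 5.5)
This is the standard convexity argument. The plan is to bound the gap by the gradient norm via the subgradient inequality, keeping every iterate inside the sublevel set where the diameter bound of \Cref{asm:convex} applies.

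\textbf{Step 1 (iterates stay in the sublevel set).} I first show that \(f(x_k)\le f(x_0)\) for all \(k\in\N\). At an unsuccessful iteration \(x_{k+1}=x_k\). At a successful iteration, \(\rho_k\ge\eta_1>0\) and \(m_k(0)-m_k(s_k)\ge0\) by~\eqref{eq:suff-decrease}, so \(f(x_{k+1})\le f(x_k)\). Induction then gives \(f(x_k)\le f(x_0)\), and \Cref{asm:convex} yields \(\|x_k-x^*\|\le R\).

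\textbf{Step 2 (convexity inequality).} Since \(f\) is convex and continuously differentiable by \Cref{asm:problem-obj}, the gradient inequality \(f(x^*)\ge f(x_k)+\nabla f(x_k)^T(x^*-x_k)\) holds. Rearranging and applying Cauchy--Schwarz gives
\[
f(x_k)-f(x^*)\le \nabla f(x_k)^T(x_k-x^*)\le \|\nabla f(x_k)\|\,\|x_k-x^*\|\le R\|\nabla f(x_k)\|.
\]

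\textbf{Step 3 (identify \(f(x^*)\) with \(f_{\rm low}\)).} Because \(x^*\) is a minimizer, \(f(x^*)=\inf f=f_{\rm low}\), so the left-hand side above is exactly \(\delta_k\) as defined in~\eqref{eq:Dk}, which proves the claim.

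There is no real obstacle. The only point needing care is Step 1, namely confirming that the acceptance test together with the nonnegativity of the model decrease guarantees monotonicity of \(\{f(x_k)\}\). This monotonicity is also used in the proof of \Cref{thm:complexity:S}.
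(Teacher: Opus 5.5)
Your proof is correct and matches the standard argument behind the cited result. The paper does not reprove this lemma; it invokes \citep[Lemma~2.4]{cartis-gould-toint-2012}, whose proof uses the same three ingredients you do: the iterates stay in the sublevel set because \(\{f(x_k)\}\) is nonincreasing (the paper says this explicitly in its remark on \(R\) versus \(R_0\)), then the convexity gradient inequality, then Cauchy--Schwarz.
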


We now state the main complexity result for convex objectives satisfying \((\alpha,L_0,L_1)\)-smoothness.

\begin{theorem}%
  \label{thm:complexity:S-convex-2}
  Let \Cref{asm:problem-obj,asm:smooth,asm:convex,asm:Bk-L0-L1} hold.
  Assume that \Cref{alg:TR-GEN} generates infinitely many successful iterations.
  Let \(\theta_{\min}\) be as in \Cref{lem:gamma-min}.
  Define
  \[
    \widehat{\kappa}_{0} \defeq \frac{R^2}{\eta_1\kdc^2(1-\eta_2)\theta_{\min}},\quad
    \widehat{\kappa}_{\alpha} \defeq \frac{\tl{1}\bigl(1+(\kdc(1-\eta_2))^\alpha\bigr)}{R^{\alpha}\,(1-\alpha)}\,\widehat{\kappa}_0,
  \]
  \[
    \widehat{\kappa}_{1} \defeq \frac{\tl{1}\bigl(1+\kdc(1-\eta_2)\bigr)}{R}\,\widehat{\kappa}_0,\quad
    \widehat{\kappa}_{2} \defeq \tl{0}\widehat{\kappa}_0.
  \]
  Then, if \(\alpha \in [0,1)\),
  \begin{equation}%
    \label{eq:S-eps0-convex-unbounded}
    |\widehat{\mathcal{S}} (\epsilon)| \leq \widehat{\kappa}_\alpha \; \epsilon^{\alpha-1} +  \widehat{\kappa}_2 \; \epsilon^{-1} -
    \widehat{\kappa}_\alpha \; \delta_0^{\alpha-1}.
  \end{equation}
  If \(\alpha = 1\),
  \begin{equation}%
    \label{eq:S-eps0-convex-alpha-1-unbounded}
    |\widehat{\mathcal{S}} (\epsilon)| \leq \widehat{\kappa}_1 \; \log(\delta_0/\epsilon) +  \widehat{\kappa}_2 \; \epsilon^{-1}.
  \end{equation}
\end{theorem}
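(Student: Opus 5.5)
We follow the proof of \Cref{thm:complexity:S}, but replace the crude bound \(\|\nabla f(x_j)\|\ge\epsilon\) by the convexity estimate of \Cref{lem:convex}. This gives a per-iteration decrease that depends on the current gap \(\delta_\ell\). Summing these decreases as a discrete integral then yields the claimed bounds.

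\emph{Step 1: per-iteration decrease.} Fix \(\ell\in\widehat{\mathcal S}(\epsilon)\).
\begin{itemize}
\item Since \(\{\delta_k\}\) is nonincreasing, \Cref{lem:convex} gives, for every \(j\le\ell\),
\[
\|\nabla f(x_j)\|\ge \delta_j/R\ge \delta_\ell/R .
\]
Hence \(\min_{j\le\ell}\|\nabla f(x_j)\|\ge \delta_\ell/R\).
\item Combine \Cref{lem:sufficient-decrease-general} with the acceptance rule \(\rho_\ell\ge\eta_1\). Then use the monotonicity of \(\halpha\) in both arguments (\Cref{lem:h-alpha-properties}) and of \(\psialpha\).
\item Next apply the lower bound of \Cref{lem:hb-bound-ag}, exactly as in the proof of \Cref{thm:complexity:S}, with \(\epsilon\) replaced by \(t\defeq\delta_\ell/R\).
\end{itemize}
This yields
\[
\delta_\ell-\delta_{\ell+1}\ \ge\ \frac{\eta_1\kdc^2(1-\eta_2)\theta_{\min}\,\delta_\ell^2/R^2}{L_0+L_1\bigl(1+(\kdc(1-\eta_2))^\alpha\bigr)(\delta_\ell/R)^\alpha}
=\frac{\delta_\ell^2}{\widehat\kappa_2+c\,\delta_\ell^{\alpha}},
\]
with \(c\defeq L_1(1+(\kdc(1-\eta_2))^\alpha)R^{-\alpha}\widehat\kappa_0\).

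\emph{Step 2: splitting the count.} Rearranged, Step 1 reads
\[
1\le \widehat\kappa_2\,\frac{\delta_\ell-\delta_{\ell+1}}{\delta_\ell^2}+c\,\frac{\delta_\ell-\delta_{\ell+1}}{\delta_\ell^{2-\alpha}} .
\]
Summing over \(\ell\in\widehat{\mathcal S}(\epsilon)\) bounds \(|\widehat{\mathcal S}(\epsilon)|\). Unsuccessful iterations leave \(\delta\) unchanged, so they contribute nothing to either sum. We then compare each sum with an integral using the following fact: for \(\beta\ge1\) and \(a\ge b>0\),
\[
\frac{a-b}{a^{\beta}}\le\int_b^a u^{-\beta}\,du ,
\]
because \(u^{-\beta}\ge a^{-\beta}\) on \([b,a]\). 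The successful iterates before \(\widehat k_\epsilon\) have gaps running from \(\delta_0\) down to \(\delta_{\widehat k_\epsilon}>\epsilon\), so the integration intervals are disjoint and contained in \([\delta_{\widehat k_\epsilon},\delta_0]\).

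\emph{Step 3: evaluating the integrals.}
\begin{itemize}
\item The \(\widehat\kappa_2\) sum is at most \(\int_{\epsilon}^{\delta_0}u^{-2}\,du\le\epsilon^{-1}\).
\item For \(\alpha<1\), the \(c\) sum is at most \(\int_\epsilon^{\delta_0}u^{\alpha-2}\,du=(\epsilon^{\alpha-1}-\delta_0^{\alpha-1})/(1-\alpha)\). Since \(c/(1-\alpha)=\widehat\kappa_\alpha\), this gives~\eqref{eq:S-eps0-convex-unbounded}.
\item For \(\alpha=1\), the integrand is \(u^{-1}\), giving \(\log(\delta_0/\epsilon)\) with coefficient \(\widehat\kappa_1\), which gives~\eqref{eq:S-eps0-convex-alpha-1-unbounded}.
\end{itemize}

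\emph{Main obstacle.} The delicate point is the lower endpoint of the integrals. The last successful step in \(\widehat{\mathcal S}(\epsilon)\) may jump from a gap above \(\epsilon\) to one below \(\epsilon\), so its interval \([\delta_{\ell+1},\delta_\ell]\) would extend below \(\epsilon\). Two facts prevent this. First, \(\widehat{\mathcal S}(\epsilon)\) only contains indices \(\ell<\widehat k_\epsilon\). Second, every index \(k\le\widehat k_\epsilon\) satisfies \(\delta_{k}>\epsilon\): by definition \(\widehat k_\epsilon\) is the first index with \(\delta_{\widehat k_\epsilon+1}\le\epsilon\), so \(\delta_k>\epsilon\) up to and including \(k=\widehat k_\epsilon\). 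Hence every right endpoint \(\delta_{\ell+1}\) with \(\ell<\widehat k_\epsilon\) lies above \(\epsilon\), and every interval stays inside \([\epsilon,\delta_0]\).
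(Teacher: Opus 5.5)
Your proposal is correct and follows essentially the same route as the paper. You bound \(\min_{j\le\ell}\|\nabla f(x_j)\|\) below by \(\delta_\ell/R\) via \Cref{lem:convex}, combine \Cref{lem:sufficient-decrease-general,lem:hb-bound-ag} into a per-iteration decrease of the form \(\delta_\ell^2/(\widehat\kappa_2+c\,\delta_\ell^\alpha)\), and compare the resulting sum with an integral over the disjoint intervals \([\delta_{\ell+1},\delta_\ell]\subseteq[\delta_{\widehat k_\epsilon},\delta_0]\). The differences are cosmetic: you substitute \(\delta_\ell/R\) before rather than after invoking \Cref{lem:hb-bound-ag}, and you compare the \(u^{-2}\) and \(u^{\alpha-2}\) terms with integrals separately, whereas the paper uses the monotonicity of the single function \(\varphi\).
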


\begin{proof}
  If \(\widehat{\mathcal{S}}(\epsilon) = \emptyset\), then the theorem holds trivially as \(|\widehat{\mathcal{S}}(\epsilon)| = 0\).
  Otherwise, consider some \(\ell \in \widehat{\mathcal{S}}(\epsilon)\).
  By \Cref{lem:sufficient-decrease-general}, we have
  \begin{align*}
     & f(x_{\ell}) - f(x_{\ell} + s_{\ell}) \geq \eta_1 \left(m_{\ell}(0) - m_{\ell}(s_{\ell})\right)                                                                                 \\
     & \quad \geq \eta_1 \kdc \|\nabla f(x_\ell)\|  \halpha(\min_{j\le \ell} \|\nabla f(x_j)\|, \kdc (1 - \eta_2)\psialpha(\min_{j \le \ell}\|\nabla f(x_j)\|)) \theta_{\min}.
  \end{align*}
  Using \Cref{lem:hb-bound-ag}, we have
  \begin{align*}
     & f(x_{\ell}) - f(x_{\ell} + s_{\ell})                                                                                                                                                                                                                                                                                                                                                                          \\
     & \geq   \frac{\dfrac{\eta_1 \kdc^2 \|\nabla f(x_\ell)\| (1 - \eta_2)\min_{j\le \ell} \|\nabla f(x_j)\|}{\tl{0} + \tl{1} \min_{j\le \ell} \|\nabla f(x_j)\|^\alpha}}{1 + \dfrac{\tl{1} (\kdc (1 - \eta_2) \min_{j\le \ell} \|\nabla f(x_j)\|/(\tl{0} + \tl{1} \min_{j\le \ell} \|\nabla f(x_j)\|^\alpha))^\alpha}{(\tl{0} + \tl{1} \min_{j\le \ell} \|\nabla f(x_j)\|^\alpha)^{\,1-\alpha}}} \theta_{\min} \\
     & \geq  \eta_1 \kdc^2 (1 - \eta_2)  \frac{\min_{j\le \ell} \|\nabla f(x_j)\|^2}{\tl{0} + \tl{1} (1 + (\kdc (1 - \eta_2))^\alpha) \min_{j\le \ell} \|\nabla f(x_j)\|^\alpha}  \theta_{\min}.
  \end{align*}
  Since \(\delta_j\) is nonincreasing, \Cref{lem:convex} implies that
  \begin{align*}
    \delta_j \le R \|\nabla f(x_j)\| & \implies \delta_\ell = \min_{j = 0, \ldots,\ell} \delta_j \le R \min_{j = 0, \ldots,\ell} \|\nabla f(x_j)\|.
  \end{align*}
  Thus, recalling~\eqref{eq:Dk}, we obtain
  \begin{equation*}
    \delta_\ell - \delta_{\ell + 1} = f(x_\ell) - f(x_\ell + s_\ell)  \geq  \frac{\eta_1 \kdc^2 (1 - \eta_2) (\delta_\ell/R)^2}{\tl{0} + \tl{1} (1 + (\kdc (1 - \eta_2))^\alpha) (\delta_\ell/R)^\alpha}  \theta_{\min}.
  \end{equation*}
  We set \(\omega \defeq \eta_1 \kdc^2 (1 - \eta_2)\theta_{\min} \) and \(\tl{\alpha} \defeq \tl{1} (1 + (\kdc (1 - \eta_2))^\alpha)\) and define the function \(\varphi: \R_+ \to \R_+\) as \( \varphi(t) \defeq \frac{t^2}{\tl{0} + \tl{\alpha} t^\alpha}\), so that \(\widehat\kappa_0 = R^2/\omega\).
  Therefore, we have \(\delta_\ell - \delta_{\ell+1}  \geq \omega \, \varphi(\delta_\ell/R)\).
  The function \(\varphi\) is increasing.
  Hence, for every \(t \in [\delta_{\ell+1}, \delta_\ell]\),
  \begin{align*}
    \omega \le \frac{\delta_\ell - \delta_{\ell+1}}{\varphi(\delta_\ell/R)} & \le \int_{\delta_{\ell+1}}^{\delta_\ell} \frac{1}{\varphi(t/R)} dt = \int_{\delta_{\ell+1}}^{\delta_{\ell}} \left(\frac{\tl{0} R^2}{t^2} + \frac{\tl{\alpha} R^{2-\alpha}}{t^{2-\alpha}}\right) dt.
  \end{align*}
  We sum this inequality over \(\ell \in \widehat{\mathcal S}(\epsilon)\).
  Since \(\{\delta_k\}\) is nonincreasing over all iterations, the intervals \([\delta_{\ell+1},\delta_\ell]\), \(\ell\in\widehat{\mathcal S}(\epsilon)\), are pairwise disjoint and contained in \([\delta_{\widehat{k}_{\epsilon}},\delta_0]\), so that
  \begin{align*}
    \omega |\widehat{\mathcal{S}}(\epsilon)| & \le \int_{\delta_{\widehat{k}_{\epsilon}}}^{\delta_0} \left(\frac{\tl{0} R^2}{t^2} + \frac{\tl{\alpha} R^{2-\alpha}}{t^{2-\alpha}}\right) dt.
  \end{align*}
  Hence, if \(\alpha \in [0,1)\), then using the fact that \(\delta_{\widehat{k}_{\epsilon}} > \epsilon\),
  \begin{align*}
    \omega |\widehat{\mathcal{S}}(\epsilon)| & \le \tl{0} R^2 \epsilon^{-1} + \frac{\tl{\alpha} R^{2-\alpha}}{1-\alpha} (\epsilon^{\alpha-1} - \delta_0^{\alpha-1})
  \end{align*}
  If \(\alpha = 1\), then
  \begin{align*}
    \omega |\widehat{\mathcal{S}}(\epsilon)| & \le \tl{0} R^2 \epsilon^{-1} + \tl{\alpha} R \log(\delta_0/\epsilon).
  \end{align*}
\end{proof}

We next specialize the analysis to strongly convex objectives.

\begin{problemassumption}%
  \label{asm:str-convex}
  There exists \(\mu > 0\) such that,
  \[
    f(y) \ge f(x) + \nabla f(x)^\top (y - x) + \frac{1}{2} \mu \|y - x\|^2 \quad \text{for all } x, y \in \R^n.
  \]
\end{problemassumption}

Strong convexity yields a sharper relation between the distance to the global minimum and the gradient norm, which yields a better complexity bound.

\begin{lemma}[{\citealp[Lemma~\(2.6\)]{cartis-gould-toint-2012}}]%
  \label{lem:str-convex}
  Assume that \Cref{asm:str-convex} holds.
  Then, for all \(k \in \N\), \(\delta_k \le \frac{1}{2\mu} \| \nabla f(x_k) \|^2\).
\end{lemma}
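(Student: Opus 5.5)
The plan is the standard Polyak--\L{}ojasiewicz argument: apply \Cref{asm:str-convex} at the base point \(x = x_k\), minimize its quadratic right-hand side over \(y\), and then pass to the infimum over \(y\) on the left-hand side. Neither \((\alpha,L_0,L_1)\)-smoothness nor any property of \Cref{alg:TR-GEN} is needed; the argument uses only \Cref{asm:problem-obj} and \Cref{asm:str-convex}.

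Write \(g_k \defeq \nabla f(x_k)\). For every \(y \in \R^n\), \Cref{asm:str-convex} gives
\[
  f(y) \ge f(x_k) + g_k^\top (y - x_k) + \tfrac12 \mu \|y - x_k\|^2 .
\]
The right-hand side is a strongly convex quadratic in \(d \defeq y - x_k\). Its minimizer is \(d = -g_k/\mu\), and its minimum value is \(f(x_k) - \|g_k\|^2/(2\mu)\). Therefore, for every \(y\),
\[
  f(y) \ge f(x_k) - \frac{1}{2\mu}\|g_k\|^2 .
\]
This lower bound does not depend on \(y\). Taking the infimum over \(y \in \R^n\) and using that \(f_{\rm low}\) is the infimum of \(f\) by \Cref{asm:problem-obj}, we get \(f_{\rm low} \ge f(x_k) - \|g_k\|^2/(2\mu)\). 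Rearranging gives \(\delta_k = f(x_k) - f_{\rm low} \le \|\nabla f(x_k)\|^2/(2\mu)\), where \(\delta_k\) is defined in~\eqref{eq:Dk}.

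There is no real obstacle. The one point to check is that the bound is taken against the infimum \(f_{\rm low}\) and not against an attained minimum, and this holds because the inequality above is valid for all \(y\). Under strong convexity a minimizer exists anyway, so the two notions coincide.
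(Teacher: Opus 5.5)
Your proof is correct and follows essentially the same route as the paper: the paper gives no proof of its own and simply cites Cartis, Gould and Toint, and the cited result rests on this standard argument of minimizing the quadratic lower bound from \Cref{asm:str-convex} at \(x = x_k\) over \(y\). Your remark that the bound holds against the infimum \(f_{\rm low}\), since the lower bound is uniform in \(y\), is the right detail to check, and it holds as you say.
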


We now state the main complexity result for strongly convex objectives.

\begin{theorem}%
  \label{thm:complexity:S-str-convex-2}
  Let \Cref{asm:problem-obj,asm:smooth,asm:str-convex,asm:Bk-L0-L1} hold.
  Assume that \Cref{alg:TR-GEN} generates infinitely many successful iterations.
  Let \(\theta_{\min}\) be as in \Cref{lem:gamma-min}.
  Define
  \[
    \begin{aligned}
      \widehat{\kappa}_{0} & \defeq \frac{1}{2 \mu \eta_1 \kdc^2 (1-\eta_2)\theta_{\min}}, \qquad
      \widehat{\kappa}_{\alpha} \defeq \tl{1}\bigl(1+(\kdc(1-\eta_2))^\alpha\bigr)\widehat{\kappa}_0,
    \end{aligned}
  \]
  Then if \(\alpha = 0\),
  \begin{equation}%
    |\widehat{\mathcal{S}} (\epsilon)| \le (\tl{0} + 2 \tl{1}) \widehat{\kappa}_{0} \log(\delta_0/\epsilon).
  \end{equation}
  Otherwise, if \(\alpha \in (0,1]\),
  \begin{equation}%
    |\widehat{\mathcal{S}} (\epsilon)| \le \tl{0} \widehat{\kappa}_{0} \log(\delta_0/\epsilon) + \frac{2^{1+\alpha/2} \, \widehat{\kappa}_{\alpha} \, \mu^{\alpha/2}}{\alpha} \, (\delta_0^{\alpha/2} - \epsilon^{\alpha/2}).
  \end{equation}
\end{theorem}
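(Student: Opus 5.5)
The plan mirrors the proof of \Cref{thm:complexity:S-convex-2}, with \Cref{lem:convex} replaced by \Cref{lem:str-convex}.
Fix \(\ell\in\widehat{\mathcal S}(\epsilon)\).
As in the convex case, \Cref{lem:sufficient-decrease-general} and \Cref{lem:hb-bound-ag} give
\[
  \delta_\ell-\delta_{\ell+1}
  \ge
  \omega\,\frac{m_\ell^2}{\tl{0}+\tl{\alpha} m_\ell^\alpha},
\]
where \(m_\ell\defeq\min_{j\le\ell}\|\nabla f(x_j)\|\), \(\omega\defeq\eta_1\kdc^2(1-\eta_2)\theta_{\min}\) and \(\tl{\alpha}\defeq\tl{1}(1+(\kdc(1-\eta_2))^\alpha)\).
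Since \(\{\delta_j\}\) is nonincreasing, \Cref{lem:str-convex} yields \(\delta_\ell\le\delta_j\le\|\nabla f(x_j)\|^2/(2\mu)\) for every \(j\le\ell\).
Hence \(m_\ell\ge\sqrt{2\mu\delta_\ell}\).
The map \(t\mapsto t^2/(\tl{0}+\tl{\alpha}t^\alpha)\) is increasing, so
\[
  \delta_\ell-\delta_{\ell+1}\ge \omega\,\varphi(\delta_\ell),
  \qquad
  \varphi(t)\defeq\frac{2\mu t}{\tl{0}+\tl{\alpha}(2\mu t)^{\alpha/2}}.
\]

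Next I would use the same integral comparison as before.
Because \(\varphi\) is increasing, \(\omega\le\int_{\delta_{\ell+1}}^{\delta_\ell}dt/\varphi(t)\).
The intervals \([\delta_{\ell+1},\delta_\ell]\), \(\ell\in\widehat{\mathcal S}(\epsilon)\), are disjoint and contained in \([\epsilon,\delta_0]\), using \(\delta_{\widehat k_\epsilon}>\epsilon\).
Summing therefore gives
\[
  \omega|\widehat{\mathcal S}(\epsilon)|
  \le
  \int_\epsilon^{\delta_0}
  \Bigl(\frac{\tl{0}}{2\mu t}+\frac{\tl{\alpha}}{(2\mu)^{1-\alpha/2}}\,t^{\alpha/2-1}\Bigr)dt.
\]
The first term integrates to \(\tl{0}\log(\delta_0/\epsilon)/(2\mu)\), and dividing by \(\omega\) gives \(\tl{0}\widehat\kappa_0\log(\delta_0/\epsilon)\).

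For \(\alpha\in(0,1]\), the second term integrates to
\[
  \frac{2\,\tl{\alpha}}{\alpha(2\mu)^{1-\alpha/2}}\bigl(\delta_0^{\alpha/2}-\epsilon^{\alpha/2}\bigr).
\]
Dividing by \(\omega\) and writing \(1/(2\mu\omega)=\widehat\kappa_0\), so that \(\tl{\alpha}\widehat\kappa_0=\widehat\kappa_\alpha\), gives a coefficient \(2\,(2\mu)^{\alpha/2}\widehat\kappa_\alpha/\alpha=2^{1+\alpha/2}\mu^{\alpha/2}\widehat\kappa_\alpha/\alpha\), as claimed.
For \(\alpha=0\), the convention \(0^0=1\) gives \(\tl{\alpha}=2\tl{1}\), so \(\varphi(t)=2\mu t/(\tl{0}+2\tl{1})\).
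The integral is then \((\tl{0}+2\tl{1})\log(\delta_0/\epsilon)/(2\mu)\), which yields the first bound.

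The only delicate step is the monotonicity transfer from gradient norms to \(\delta_\ell\) through the running minimum.
It relies on \(\delta_j\) being nonincreasing over all iterations, including unsuccessful ones where \(x_{j+1}=x_j\).
It also relies on the monotonicity of \(\halpha\) in both arguments from \Cref{lem:h-alpha-properties}, which is already used in the convex proof.
The rest is bookkeeping of constants.
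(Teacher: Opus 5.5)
Your proposal is correct and follows the paper's proof essentially line for line. You use the same decrease bound from \Cref{lem:sufficient-decrease-general} and \Cref{lem:hb-bound-ag}, the same substitution \(\min_{j\le\ell}\|\nabla f(x_j)\|\ge\sqrt{2\mu\delta_\ell}\) via \Cref{lem:str-convex} and monotonicity of \(\{\delta_k\}\), and the same integral comparison over the non-overlapping intervals \([\delta_{\ell+1},\delta_\ell]\). The only cosmetic differences are that you fold the factor \(2\mu\) into \(\varphi\) and integrate directly from \(\epsilon\) rather than from \(\delta_{\widehat k_\epsilon}\); your constants match the statement in both the \(\alpha=0\) and \(\alpha\in(0,1]\) cases.
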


\begin{proof}
  If \(\widehat{\mathcal{S}}(\epsilon) = \emptyset\), then the theorem holds trivially as \(|\widehat{\mathcal{S}}(\epsilon)| = 0\).
  Otherwise, consider some \(\ell \in \widehat{\mathcal{S}}(\epsilon)\).
  By \Cref{lem:sufficient-decrease-general,lem:hb-bound-ag}, we have
  \begin{align*}
     & f(x_{\ell}) - f(x_{\ell} + s_{\ell})  \ge    \frac{\eta_1 \kdc^2 (1 - \eta_2) \min_{j\le \ell} \|\nabla f(x_j)\|^2}{\tl{0} + \tl{1} (1 + (\kdc (1 - \eta_2))^\alpha) \min_{j\le \ell} \|\nabla f(x_j)\|^\alpha}  \theta_{\min}.
  \end{align*}
  Furthermore, due to the decreasing property of \(\delta_k\) and the application of \Cref{lem:str-convex}, we conclude that for all \(j \in \{0, \ldots, \ell\}\),
  \begin{align*}
    \delta_j \le \frac{1}{2\mu} \|\nabla f(x_j)\|^2 & \implies \delta_\ell = \min_{j = 0, \ldots,\ell} \delta_j \le \frac{1}{2\mu} \min_{j = 0, \ldots,\ell} \|\nabla f(x_j)\|^2 .
  \end{align*}
  Thus, recalling~\eqref{eq:Dk}, we obtain
  \begin{equation*}
    \delta_\ell - \delta_{\ell + 1} = f(x_\ell) - f(x_\ell + s_\ell) \geq  \frac{2 \eta_1 \kdc^2 (1 - \eta_2) \mu \delta_\ell}{\tl{0} + \tl{1} (1 + (\kdc (1 - \eta_2))^\alpha) \sqrt{2\mu \delta_\ell}^\alpha}  \theta_{\min}.
  \end{equation*}
  Note that the map \(m \mapsto m^2/(\tl{0}+\tl{\alpha}m^\alpha)\) is increasing on \((0,\infty)\), which is what licenses substituting the lower bound \(\min_{j\le\ell}\|\nabla f(x_j)\| \ge \sqrt{2\mu\delta_\ell}\) in the display above.
  We set \(\omega \defeq \eta_1 \kdc^2 (1 - \eta_2)\theta_{\min} \) and \(\tl{\alpha} \defeq \tl{1} (1 + (\kdc (1 - \eta_2))^\alpha)\) and define the function \(\varphi: \R_+ \to \R_+\) as \(\varphi(t) \defeq \frac{t}{\tl{0} + \tl{\alpha} (\sqrt{2\mu t})^\alpha}\), so that \(\widehat\kappa_0 = 1/(2\mu\omega)\).
  Therefore, we have \(\delta_\ell - \delta_{\ell+1} \geq 2 \mu \omega \, \varphi(\delta_\ell)\).
  The function \(\varphi\) is increasing.
  Hence, for every \(t \in [\delta_{\ell+1}, \delta_\ell]\),
  \begin{align*}
    2 \mu \omega \le \frac{\delta_\ell - \delta_{\ell+1}}{\varphi(\delta_\ell)} & \le \int_{\delta_{\ell+1}}^{\delta_\ell} \frac{1}{\varphi(t)} dt  = \int_{\delta_{\ell+1}}^{\delta_{\ell}} \left(\frac{\tl{0}}{t} + \frac{\tl{\alpha} (2\mu)^{\alpha/2}}{t^{1-\alpha/2}}\right) dt.
  \end{align*}
  We sum this inequality over \(\ell \in \widehat{\mathcal S}(\epsilon)\).
  As above, \(\{\delta_k\}\) is nonincreasing over all iterations, so the intervals \([\delta_{\ell+1},\delta_\ell]\), \(\ell\in\widehat{\mathcal S}(\epsilon)\), are pairwise disjoint and contained in \([\delta_{\widehat{k}_{\epsilon}},\delta_0]\), whence
  \begin{align*}
    2 \mu \omega |\widehat{\mathcal{S}}(\epsilon)| & \le \int_{\delta_{\widehat{k}_{\epsilon}}}^{\delta_0} \left(\frac{\tl{0}}{t} + \frac{\tl{\alpha} (2\mu)^{\alpha/2}}{t^{1-\alpha/2}}\right) dt.
  \end{align*}
  If \(\alpha=0\), using \(\delta_{\widehat k_\epsilon}>\epsilon\), we have
  \begin{align*}
    2 \mu \omega |\widehat{\mathcal{S}}(\epsilon)| & \le (\tl{0} + 2 \tl{1}) \log(\delta_0/\epsilon).
  \end{align*}
  Otherwise, if \(\alpha \in (0,1]\), then
  \begin{align*}
    2 \mu \omega |\widehat{\mathcal{S}}(\epsilon)| & \le \tl{0} \log(\delta_0/\epsilon) + \frac{2^{1+\alpha/2} \, \tl{\alpha} \, \mu^{\alpha/2}}{\alpha} \, (\delta_0^{\alpha/2} - \epsilon^{\alpha/2}),
  \end{align*}
  which gives the desired result.
\end{proof}

Several remarks on \Cref{thm:complexity:S-convex-2,thm:complexity:S-str-convex-2} are in order.
For \(\alpha=0\), which corresponds to the \(L\)-smooth case, where \(L = L_0 + L_1\), we recover the standard \(O(L \epsilon^{-1})\) convex and \(O(L \log(\epsilon^{-1}))\) strongly convex bounds.
For \(\alpha\in(0,1)\), our bounds are \(O(L_0\epsilon^{-1}+L_1\epsilon^{\alpha-1})\) and \(O(L_0\log(\epsilon^{-1})+L_1)\), respectively, which improves the dependence on \(L_1\) in \citep{li2023convex}; see \Cref{tab:L0L1-complexity-alpha}. 
The strongly convex bound appears to be new.
For \(\alpha=1\), the convex bound \(O(L_0\epsilon^{-1}+L_1\log(\epsilon^{-1}))\) matches the order obtained for gradient descent in \citep{vankov2024optimizing}, but without requiring prior knowledge of \(L_0\) or \(L_1\).
In the convex case, our constants in \Cref{tab:L0L1-complexity,tab:L0L1-complexity-alpha} differ slightly from those in \citep{tyurin2025unifiedtheorygradientdescent,li2023convex,vankov2024optimizing,gorbunov2025methods,koloskova2023revisiting} with respect to the radius \(R\): these works state their bounds in terms of \(R_0 = \|x_0 - x^\star\|\), whereas ours involve the radius \(R \ge R_0\) of the sublevel set \(\{x : f(x) \le f(x_0)\}\) around \(x^\star\) (\Cref{asm:convex}).
This is because, unlike the gradient methods analyzed in those works, trust-region methods do not guarantee that \(\|x_k - x^\star\|\) is nonincreasing; they only guarantee that \(f(x_k)\) is, so that the iterates remain in this sublevel set.
In the strongly convex case, our constants in \Cref{tab:L0L1-complexity,tab:L0L1-complexity-alpha} also differ from those in \citep{koloskova2023revisiting,gorbunov2025methods,li2023convex} because we measure convergence by the objective gap rather than the distance to a solution.
Despite this difference, the resulting complexity bounds have the same order in \(\epsilon\), while the displayed \(L_1\)-dependent term is linear in \(L_1\), whereas \citet{gorbunov2025methods}'s is quadratic.

\subsection{Sharpness of the complexity bound}%

Let $\alpha\in[0,1]$ and let $\epsilon\in(0,\frac12]$ denote the desired accuracy.
Define
\begin{equation}
  \label{eq:def-Neps-alpha}
  g_\epsilon \defeq 2\epsilon,
  \qquad
  \bar k_\epsilon
  \defeq
  \left\lfloor 2g_\epsilon^{\alpha-2}\right\rfloor+1 = \left\lfloor 2^{\alpha-1}\epsilon^{\alpha-2}\right\rfloor+1.
\end{equation}
Our goal is to construct a continuously differentiable, bounded-below, $(\alpha,0,1)$-smooth function $f_{\alpha,\epsilon}\colon\R\to\R$ for which the trust-region method requires exactly $\bar k_\epsilon$ iterations to produce $x_{\bar k_\epsilon}$ satisfying $\lvert f_{\alpha,\epsilon}'(x_{\bar k_\epsilon})\rvert\leq\epsilon$.

Set $\vartheta_\alpha\defeq\alpha^{1/(1-\alpha)}$ if $\alpha\in[0,1)$ and $\vartheta_1\defeq e^{-1}$.
For $k\in\{0,\ldots,\bar k_\epsilon\}$, define
\begin{equation}
  \label{eq:gk-alpha-sharp}
  g_k
  \defeq
  \begin{cases}
    -g_\epsilon,
     & 0\leq k<\bar k_\epsilon,
    \\[1ex]
    -\vartheta_\alpha g_\epsilon,
     & k=\bar k_\epsilon.
  \end{cases}
\end{equation}
Because $\vartheta_\alpha\leq e^{-1}<\frac12$, we have $\lvert g_k\rvert=2\epsilon>\epsilon$ for $k=0,\ldots,\bar k_\epsilon-1$, whereas $\lvert g_{\bar k_\epsilon}\rvert =2\vartheta_\alpha\epsilon<\epsilon$.

For every $k=0,\ldots,\bar k_\epsilon-1$, define
\begin{equation}
  \label{eq:Bk-alpha-sharp}
  B_k
  \defeq
  \lvert g_k\rvert^\alpha
  =
  g_\epsilon^\alpha.
\end{equation}
Thus, \Cref{asm:Bk-L0-L1} holds with $b_0=0$ and $b_1=1$.

Set $\Delta_0\geq1$ and, for every $k=0,\ldots,\bar k_\epsilon-1$, define
\begin{equation}
  \label{eq:xk-alpha-sharp}
  x_0\defeq0,
  \qquad
  s_k
  \defeq
  -B_k^{-1}g_k
  =
  g_\epsilon^{1-\alpha},
  \qquad
  x_{k+1}\defeq x_k+s_k.
\end{equation}
Consequently, $x_k=kg_\epsilon^{1-\alpha}$ for $k=0,\ldots,\bar k_\epsilon$.

Finally, prescribe the function values at the iterates by
\begin{equation}
  \label{eq:fk-alpha-sharp}
  f_k
  \defeq
  \begin{cases}
    \dfrac{g_\epsilon^{2-\alpha}}{2-\alpha}
    +(\bar k_\epsilon-1-k)g_\epsilon^{2-\alpha},
     & 0\leq k<\bar k_\epsilon,
    \\[2ex]
    \dfrac{g_\epsilon^{2-\alpha}}{2-\alpha}
    \vartheta_\alpha^{2-\alpha},
     & k=\bar k_\epsilon.
  \end{cases}
\end{equation}

The next lemma establishes basic properties of $\{f_k\}$.

\begin{lemma}%
  \label{lem:fk-alpha-sharp}
  The sequence $\{f_k\}$ defined in~\eqref{eq:fk-alpha-sharp} is non-increasing and satisfies $f_k\in[0,f_0]$ for every $k=0,\ldots,\bar k_\epsilon$, where $\bar k_\epsilon$ is defined in~\eqref{eq:def-Neps-alpha}.
  Moreover, $f_0\leq3$.
\end{lemma}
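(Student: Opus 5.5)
The argument is a direct check on the explicit formula~\eqref{eq:fk-alpha-sharp}, split into the range $0\le k<\bar k_\epsilon$ and the final index $k=\bar k_\epsilon$.

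\emph{Monotonicity.} On the first range, $f_k-f_{k+1}=g_\epsilon^{2-\alpha}>0$ whenever $k+1<\bar k_\epsilon$. For the last transition, $f_{\bar k_\epsilon-1}=g_\epsilon^{2-\alpha}/(2-\alpha)$, and
\[
f_{\bar k_\epsilon-1}-f_{\bar k_\epsilon}
=\frac{g_\epsilon^{2-\alpha}}{2-\alpha}\bigl(1-\vartheta_\alpha^{2-\alpha}\bigr)\ge0 .
\]
This holds because $\vartheta_\alpha\in[0,e^{-1}]\subset[0,1]$ and $2-\alpha\ge1>0$. For $\alpha=0$ we have $\vartheta_0=0^{1}=0$, so the claim is immediate. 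For $\alpha\in(0,1)$ we have $\alpha^{1/(1-\alpha)}\le1$. Hence $\{f_k\}$ is nonincreasing.

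\emph{Range.} Every term in~\eqref{eq:fk-alpha-sharp} is nonnegative: $g_\epsilon>0$, $2-\alpha>0$, $\bar k_\epsilon-1-k\ge0$ for $k<\bar k_\epsilon$, and $\vartheta_\alpha\ge0$. Together with monotonicity this gives $f_k\in[f_{\bar k_\epsilon},f_0]\subset[0,f_0]$.

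\emph{Bound $f_0\le3$.} This is the only step requiring care, and it uses the definition~\eqref{eq:def-Neps-alpha} of $\bar k_\epsilon$. Since $\lfloor 2g_\epsilon^{\alpha-2}\rfloor\le 2g_\epsilon^{\alpha-2}$, we have $\bar k_\epsilon-1\le 2g_\epsilon^{\alpha-2}$. Therefore
\[
f_0=\frac{g_\epsilon^{2-\alpha}}{2-\alpha}+(\bar k_\epsilon-1)g_\epsilon^{2-\alpha}
\le \frac{g_\epsilon^{2-\alpha}}{2-\alpha}+2 .
\]
Because $\epsilon\le\tfrac12$, we get $g_\epsilon=2\epsilon\le1$, so $g_\epsilon^{2-\alpha}\le1$. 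Also $2-\alpha\ge1$, so the first term is at most $1$. This yields $f_0\le3$.
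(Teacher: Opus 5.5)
Your proof is correct and follows the paper's argument essentially line by line: the same two difference computations give monotonicity, nonnegativity of each term gives the range, and the bound $\bar k_\epsilon-1\le 2g_\epsilon^{\alpha-2}$, together with $g_\epsilon\le1$ and $2-\alpha\ge1$, gives $f_0\le3$. Your explicit case check on $\vartheta_\alpha$ (including $\vartheta_0=0$) is a harmless refinement of the paper's single appeal to $\vartheta_\alpha<1$.
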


\begin{proof}
  For every $k=0,\ldots,\bar k_\epsilon-2$, $f_{k+1}-f_k=-g_\epsilon^{2-\alpha}<0$.
  For the final pair, \( f_{\bar k_\epsilon}-f_{\bar k_\epsilon-1} = \frac{g_\epsilon^{2-\alpha}}{2-\alpha} \bigl(\vartheta_\alpha^{2-\alpha}-1\bigr) <0\), because $\vartheta_\alpha<1$.
  Hence, $\{f_k\}$ is non-increasing.
  Since every term in~\eqref{eq:fk-alpha-sharp} is nonnegative, we obtain $0\leq f_k\leq f_0$ for every $k=0,\ldots,\bar k_\epsilon$.

  Finally, using~\eqref{eq:def-Neps-alpha},
  \[
    f_0
    \leq
    \frac{g_\epsilon^{2-\alpha}}{2-\alpha}
    +2g_\epsilon^{\alpha-2}g_\epsilon^{2-\alpha}
    =
    \frac{g_\epsilon^{2-\alpha}}{2-\alpha}+2
    \leq3,
  \]
  where the last inequality follows from $g_\epsilon=2\epsilon\leq1$ and $2-\alpha\geq1$.
\end{proof}

The next theorem shows that the complexity bound of order \(\epsilon^{\alpha-2}\) is sharp for \((\alpha,L_0,L_1)\)-smooth objectives with \(L_0=0\).

\begin{theorem}%
  \label{thm:sharpness-alpha}
  Let $\alpha\in[0,1]$ and $0<\epsilon\leq\frac12$.
  \Cref{alg:TR-GEN}, with model Hessians satisfying
  \Cref{asm:Bk-L0-L1} with $b_0=0$ and $b_1=1$, applied to a
  continuously differentiable, bounded-below,
  $(\alpha,0,1)$-smooth function $f\colon\R\to\R$, may require
  $\bar k_\epsilon$ iterations to produce $x_{\bar k_\epsilon}$ such that
  $\lvert f'(x_{\bar k_\epsilon})\rvert\leq\epsilon$, where
  $\bar k_\epsilon$ is defined in~\eqref{eq:def-Neps-alpha}.
\end{theorem}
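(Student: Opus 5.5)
The plan is to build $f$ by prescribing its derivative directly. On the intervals visited by the first $\bar k_\epsilon-1$ steps, $f$ is affine with slope $-g_\epsilon$. On the last interval, $u\defeq -f'$ solves the ODE $u'=-u^\alpha$. I then check that Hermite data~\eqref{eq:gk-alpha-sharp}--\eqref{eq:fk-alpha-sharp} is reproduced exactly, that $f$ is $(\alpha,0,1)$-smooth and bounded below, and that \Cref{alg:TR-GEN} with suitable parameters produces exactly the iterates~\eqref{eq:xk-alpha-sharp}. The step $s_k=g_\epsilon^{1-\alpha}$ is the same for every $k$; write $s\defeq g_\epsilon^{1-\alpha}$.

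\textbf{Construction.} For $x\le x_{\bar k_\epsilon-1}$ set $f(x)\defeq f_{\bar k_\epsilon-1}-g_\epsilon\,(x-x_{\bar k_\epsilon-1})$. This gives $f'=-g_\epsilon=g_k$ at every $x_k$ with $k<\bar k_\epsilon$. Since each step has length $s=g_\epsilon^{1-\alpha}$, consecutive values differ by $g_\epsilon s=g_\epsilon^{2-\alpha}$, which matches $f_k-f_{k+1}$ for $k\le\bar k_\epsilon-2$.

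For $x\ge x_{\bar k_\epsilon-1}$, let $u$ solve $u'=-u^\alpha$ with $u(x_{\bar k_\epsilon-1})=g_\epsilon$, and set $u\equiv0$ once it reaches $0$; this only happens when $\alpha<1$. Put $f'=-u$.
\begin{itemize}
\item For $\alpha<1$, $u^{1-\alpha}$ decreases linearly at rate $1-\alpha$. After length $s$ we get $u(x_{\bar k_\epsilon})^{1-\alpha}=\alpha g_\epsilon^{1-\alpha}$, that is, $u(x_{\bar k_\epsilon})=\vartheta_\alpha g_\epsilon$.
\item For $\alpha=1$ we have $s=1$ and $u(x_{\bar k_\epsilon})=e^{-1}g_\epsilon$.
\end{itemize}
The change of variables $dx=-du/u^\alpha$ gives $f(x_{\bar k_\epsilon-1})-f(x_{\bar k_\epsilon})=\int_{\vartheta_\alpha g_\epsilon}^{g_\epsilon}u^{1-\alpha}\,du=\frac{g_\epsilon^{2-\alpha}}{2-\alpha}(1-\vartheta_\alpha^{2-\alpha})$, which matches~\eqref{eq:fk-alpha-sharp}. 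By the same computation, the total further decrease on $[x_{\bar k_\epsilon},\infty)$ is $\frac{(\vartheta_\alpha g_\epsilon)^{2-\alpha}}{2-\alpha}=f_{\bar k_\epsilon}$, so $f\ge0$ and $f$ is bounded below.

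\textbf{Smoothness.} The function $f'$ is continuous and piecewise $\mathcal C^1$, with $|f''|\in\{0,\ |f'|^\alpha\}$ almost everywhere. Then $|f'(x)-f'(y)|\le\int_{[x,y]}|f'|^\alpha$, which is~\eqref{def-generalized-smooth-integral} with $L_0=0$ and $L_1=1$. For $\alpha=0$, the convention $0^0=1$ makes this hold trivially at the point where $u$ vanishes.

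\textbf{Run of the algorithm.} Take $B_k=g_\epsilon^\alpha$ as in~\eqref{eq:Bk-alpha-sharp} and $\Delta_0\ge1$.
\begin{itemize}
\item Since $s_k=g_\epsilon^{1-\alpha}\le1\le\Delta_k$, provided the radius never shrinks, $s_k$ is the exact minimizer of the one-dimensional model in the trust region. It therefore satisfies~\eqref{eq:suff-decrease}.
\item The predicted decrease is $\tfrac12 g_\epsilon^{2-\alpha}$. For $k\le\bar k_\epsilon-2$ the actual decrease is $g_\epsilon^{2-\alpha}$, so $\rho_k=2\ge\eta_2$.
\item For $k=\bar k_\epsilon-1$ we get $\rho_k=\frac{2}{2-\alpha}(1-\vartheta_\alpha^{2-\alpha})\ge1-e^{-1}$. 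Hence, with $\eta_2\le1-e^{-1}$, every iteration is very successful and $\Delta_k$ is nondecreasing, so induction keeps $\Delta_k\ge1$.
\end{itemize}
Because $|f'(x_k)|=2\epsilon>\epsilon$ for $k<\bar k_\epsilon$ and $|f'(x_{\bar k_\epsilon})|=2\vartheta_\alpha\epsilon<\epsilon$, exactly $\bar k_\epsilon$ iterations are needed.

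\textbf{Main obstacle.} The key point is making the last interval consistent with data that already have their values fixed. Solving the extremal ODE $f''=|f'|^\alpha$ is what explains the choice of $\vartheta_\alpha$, and it is also what makes the $\epsilon^{\alpha-2}$ bound tight. The other delicate spot is the $\alpha=0$ endpoint, where $\vartheta_0=0$: there $f'$ hits $0$ exactly at $x_{\bar k_\epsilon}$, and this still has to be checked against~\eqref{def-generalized-smooth-integral}.
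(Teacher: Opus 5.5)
Your proposal is correct and is essentially the paper's proof: the function you obtain from \(u'=-u^\alpha\) is exactly the paper's \(f_{\alpha,\epsilon}\), and the run of \Cref{alg:TR-GEN} is checked the same way. The one difference is in the smoothness step, where you verify~\eqref{def-generalized-smooth-integral} on all of \(\R\) directly via \(|f'(x)-f'(y)|\le\int_{[x,y]}|f'|^\alpha\), while the paper checks each piece and glues them with \Cref{lem:linking-alpha}. Your restriction \(\eta_2\le 1-e^{-1}\) is unnecessary, since \(\vartheta_\alpha^{2-\alpha}=\alpha\vartheta_\alpha\le\alpha/2\) gives \(\rho_{\bar k_\epsilon-1}\ge1\) for every admissible \(\eta_2<1\); in any case, only acceptance, \(\rho_{\bar k_\epsilon-1}\ge\eta_1\), matters at the last iteration.
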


\begin{proof}
  Set $\ell_{\alpha,\epsilon}\defeq g_\epsilon^{1-\alpha}$, $z_{\alpha,\epsilon}\defeq(\bar k_\epsilon-1)\ell_{\alpha,\epsilon}$, $c_{\alpha,\epsilon}\defeq g_\epsilon^{2-\alpha}/(2-\alpha)$ and set
  \(
  w_{\alpha,\epsilon}
  \defeq
  z_{\alpha,\epsilon}
  +g_\epsilon^{1-\alpha}/(1-\alpha),
  \) with the convention that $w_{\alpha,\epsilon}=+\infty$ when $\alpha=1$.
  Consider
  \begin{equation}
    \label{eq:f-alpha-sharp}
    f_{\alpha,\epsilon}(x)
    \defeq
    \begin{cases}
      c_{\alpha,\epsilon}
      +g_\epsilon(z_{\alpha,\epsilon}-x),
       & x\leq z_{\alpha,\epsilon},
      \\[1ex]
      \dfrac{1}{2-\alpha}
      \left[
      g_\epsilon^{1-\alpha}
      -(1-\alpha)(x-z_{\alpha,\epsilon})
      \right]^{\frac{2-\alpha}{1-\alpha}},
       & z_{\alpha,\epsilon}<x<w_{\alpha,\epsilon},
      \quad \alpha<1,
      \\[2ex]
      0,
       & x\geq w_{\alpha,\epsilon},
      \quad \alpha<1,
      \\[1ex]
      g_\epsilon e^{z_{\alpha,\epsilon}-x},
       & x>z_{\alpha,\epsilon},
      \quad \alpha=1.
    \end{cases}
  \end{equation}

  We now show that $f_{\alpha,\epsilon}$ is continuously differentiable and $(\alpha,0,1)$-smooth.
  Suppose first that $\alpha\in[0,1)$.
  On the intervals \(I_{\alpha, 1}=(-\infty,z_{\alpha,\epsilon}]\), \(I_{\alpha, 2}=[z_{\alpha,\epsilon},w_{\alpha,\epsilon}]\), and \(I_{\alpha, 3}=[w_{\alpha,\epsilon},+\infty)\), the function is respectively affine, polynomial-type, and constant.
  At the first junction, the adjacent pieces satisfy \(f_{\alpha,\epsilon}(z_{\alpha,\epsilon}) = c_{\alpha,\epsilon}, f_{\alpha,\epsilon}'(z_{\alpha,\epsilon}) = -g_\epsilon\).
  At the second junction, they satisfy \(f_{\alpha,\epsilon}(w_{\alpha,\epsilon}) = f_{\alpha,\epsilon}'(w_{\alpha,\epsilon}) = 0\).
  Thus, $f_{\alpha,\epsilon}\in\mathcal C^1(\R)$.
  Moreover, on $I_{\alpha, 1}$ and $I_{\alpha, 3}$ we have $\lvert f_{\alpha,\epsilon}''\rvert=0$, while on $I_{\alpha, 2}$, \(\lvert f_{\alpha,\epsilon}''(x)\rvert = \lvert f_{\alpha,\epsilon}'(x)\rvert^\alpha\).
  Hence, $f_{\alpha,\epsilon}$ is $(\alpha,0,1)$-smooth on each $I_{\alpha, i}$.
  Applying \Cref{lem:linking-alpha} first at $z_{\alpha,\epsilon}$ on $(-\infty,w_{\alpha,\epsilon}]$, and then at $w_{\alpha,\epsilon}$ on $\R$, shows that $f_{\alpha,\epsilon}$ is $(\alpha,0,1)$-smooth on $\R$.

  When $\alpha=1$, consider \(I_{1,1}=(-\infty,z_{1,\epsilon}]\) and \(I_{1,2}=[z_{1,\epsilon},+\infty)\).
  The function is affine on $I_{1,1}$ and exponential on $I_{1,2}$, and the two pieces satisfy \(f_{1,\epsilon}(z_{1,\epsilon})=g_\epsilon, f_{1,\epsilon}'(z_{1,\epsilon})=-g_\epsilon\).
  Therefore, $f_{1,\epsilon}\in\mathcal C^1(\R)$.
  Furthermore, $\lvert f_{1,\epsilon}''\rvert=0$ on $I_{1,1}$ and $\lvert f_{1,\epsilon}''\rvert =\lvert f_{1,\epsilon}'\rvert$ on $I_{1,2}$.
  An application of \Cref{lem:linking-alpha} at $z_{1,\epsilon}$ shows that $f_{1,\epsilon}$ is $(1,0,1)$-smooth on $\R$.

  We now verify that the sequences defined in~\eqref{eq:gk-alpha-sharp}--\eqref{eq:fk-alpha-sharp} constitute a valid run of \Cref{alg:TR-GEN}.
  From~\eqref{eq:Bk-alpha-sharp} and~\eqref{eq:xk-alpha-sharp}, $B_k=g_\epsilon^\alpha$, $s_k=\ell_{\alpha,\epsilon}$, and $x_k=k\ell_{\alpha,\epsilon}$.
  In particular, $x_{\bar k_\epsilon-1}=z_{\alpha,\epsilon}$, and direct substitution in~\eqref{eq:f-alpha-sharp} gives $f_{\alpha,\epsilon}(x_k)=f_k$ and $f_{\alpha,\epsilon}'(x_k)=g_k$.

  The quadratic model is $m_k(s)=f_k-g_\epsilon s+\frac12g_\epsilon^\alpha s^2$.
  Its unique minimizer is $s_k=g_\epsilon^{1-\alpha}$, and its predicted reduction is $\frac12g_\epsilon^{2-\alpha}$.
  Because $g_\epsilon\leq1$, $\lvert s_k\rvert\leq1$.
  Starting from $\Delta_0\geq1$, an induction shows that $s_k$ lies in the trust region at every iteration.
  Indeed, for $k=0,\ldots,\bar k_\epsilon-2$, the objective is affine on $[x_k,x_{k+1}]$, and hence $\rho_k=2$.

  At the final iteration, direct evaluation of~\eqref{eq:f-alpha-sharp} gives
  \[
    \rho_{\bar k_\epsilon-1}
    =
    \frac{2}{2-\alpha}
    \left(1-\vartheta_\alpha^{2-\alpha}\right)
    \geq1.
  \]
  Indeed, $\vartheta_\alpha^{2-\alpha}\leq\alpha/2$ for $\alpha\in[0,1)$, while $\vartheta_1=e^{-1}<1/2$.
  Thus, every iteration is very successful, $\Delta_{k+1}\geq\Delta_k$, and the induction is complete.
  Since $s_k$ is the exact trust-region minimizer, the required model-decrease condition also holds.

  Finally,~\eqref{eq:gk-alpha-sharp} gives $\lvert f_{\alpha,\epsilon}'(x_k)\rvert =g_\epsilon=2\epsilon>\epsilon$ for $k<\bar k_\epsilon$, whereas $\lvert f_{\alpha,\epsilon}'(x_{\bar k_\epsilon})\rvert =\vartheta_\alpha g_\epsilon<\epsilon$.
  Thus, $\bar k_\epsilon$ is the first successful termination index.
  Moreover, by~\eqref{eq:def-Neps-alpha}, $\bar k_\epsilon>2g_\epsilon^{\alpha-2} =2^{\alpha-1}\epsilon^{\alpha-2}$, which proves the result.
\end{proof}

\section{Discussion}%
\label{sec:discussion}

Several perspectives remain open.
Our analysis is deterministic and concerns first-order stationarity, extending it to stochastic objectives would require additional assumptions.
Moreover, sharpness is established only for the nonconvex case, leaving open whether the convex and strongly convex bounds are optimal.
Finally, extending the analysis to broader generalized smoothness conditions, in particular \(\alpha>1\), remains an interesting direction for future work.

\appendix
\section{Proofs}%
\label{sec:proofs}

\printProofs

\begin{lemma}%
  \label{lem:bounded-secant-slopes-L0-L1}
  Let \Cref{asm:problem-obj,asm:smooth} hold.
  Assume \(k \in \mathcal{S}\), \(s_k \neq 0\) and define \(y_k\defeq\nabla f(x_k+s_k)-\nabla f(x_k)\).
  Then, for every fixed \(r>0\),
  \[
    \frac{\|y_k\|}{\|s_k\|}
    \le
    b_m \defeq \inf_{r > 0} \max\left\{
    L_0+L_1\overline\nu_r^\alpha,
    \frac{2 g_{\max}}{r}
    \right\},
  \]
  where
  \[
    \overline\nu_r
    \defeq
    \begin{cases}
      \left[
        (g_{\max}+L_0r)^{1-\alpha}
        +(1-\alpha)L_1r
        \right]^{\frac1{1-\alpha}},
       & \alpha\in[0,1),   \\[1ex]
      \left(g_{\max}+\dfrac{L_0}{L_1}\right)e^{L_1r}
      -\dfrac{L_0}{L_1},
       & \alpha=1,\ L_1>0, \\[1ex]
      g_{\max}+L_0r,
       & \alpha=1,\ L_1=0,
    \end{cases}
  \]
  where \(g_{\max} \defeq \sqrt{8L_0\bigl(f(x_0)-f_{\textup{low}}\bigr)} + (2^{\alpha+3}L_1\bigl(f(x_0)-f_{\textup{low}}\bigr))^{\frac{1}{2-\alpha}}\).
\end{lemma}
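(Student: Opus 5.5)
The plan is to fix an arbitrary \(r>0\) and bound \(\|y_k\|/\|s_k\|\) by a two-case argument on the length of the step. Taking the infimum over \(r\) then yields \(b_m\).

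First, a uniform gradient bound along the iterates. Since \(k\in\mathcal S\) and \(\{f(x_j)\}\) is nonincreasing, we have \(f(x_k+s_k)=f(x_{k+1})\le f(x_k)\le f(x_0)\). So both endpoints lie in the sublevel set \(\{f\le f(x_0)\}\). Applying~\eqref{eq:gradient-bound} of \Cref{lem:decrease-L0-L1} at \(x_k\) and at \(x_k+s_k\), together with \(f-f_{\textup{low}}\le f(x_0)-f_{\textup{low}}\), gives
\[
\|\nabla f(x_k)\|\le g_{\max}\quad\text{and}\quad \|\nabla f(x_k+s_k)\|\le g_{\max}.
\]
If a gradient vanishes, the bound holds trivially.

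\textbf{Case \(\|s_k\|\ge r\).} Here the crude triangle inequality suffices: \(\|y_k\|\le 2g_{\max}\), hence \(\|y_k\|/\|s_k\|\le 2g_{\max}/r\).

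\textbf{Case \(\|s_k\|<r\).} Here I will use the equivalent form~\eqref{def-generalized-smooth}, so I need a bound on \(\sup_{u\in[x_k,x_k+s_k]}\|\nabla f(u)\|\). I control it by the same comparison/ODE argument as in the proof of~\eqref{eq:gradient-difference}. Set \(\phi(t)\defeq\|\nabla f(x_k+ts)\|\) with \(s=s_k\), and \(\chi(t)\defeq\|s\|\int_0^t(L_0+L_1\phi(\tau)^\alpha)\,d\tau\). Then \(\phi(t)\le g_{\max}+\chi(t)\) and \(\chi'\le \|s\|(L_0+L_1(g_{\max}+\chi)^\alpha)\).

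Let \(u\defeq g_{\max}+\chi\). For \(\alpha<1\) I use the cruder but cleaner inequality \(L_0+L_1u^\alpha\le L_0 + \ldots\); more precisely, I aim for \(u'\le \|s\|(L_0+L_1u^\alpha)\). To match the stated \(\overline\nu_r\), I compare with \(v\defeq u+L_0\|s\|t\). Then \(v' = u'+L_0\|s\| \), and since \(u\le v\), the difference \(u'-L_0\|s\|\le L_1\|s\|v^\alpha\). This suggests working directly with \(w(t)=g_{\max}+L_0\|s\|t+\chi_1(t)\), where \(\chi_1\) is the \(L_1\)-part, so that \(w'\le L_0\|s\|+L_1\|s\|w^\alpha\). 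Bounding \(L_0\|s\|t\le L_0 r\) and running the \(\zeta^{1-\alpha}\) argument from the proof of \Cref{lem:decrease-L0-L1}, with initial value \(g_{\max}+L_0r\), gives
\[
w(1)^{1-\alpha}\le (g_{\max}+L_0r)^{1-\alpha}+(1-\alpha)L_1 r,
\]
that is, \(\phi\le\overline\nu_r\) on the segment.

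For \(\alpha=1\) with \(L_1>0\), the linear ODE \(u'\le\|s\|(L_0+L_1u)\) gives \(u+L_0/L_1\le (g_{\max}+L_0/L_1)e^{L_1\|s\|}\). For \(L_1=0\), the bound is simply \(u\le g_{\max}+L_0 r\). Then~\eqref{def-generalized-smooth} yields \(\|y_k\|\le (L_0+L_1\overline\nu_r^\alpha)\|s_k\|\).

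Combining the two cases gives \(\|y_k\|/\|s_k\|\le\max\{L_0+L_1\overline\nu_r^\alpha,\,2g_{\max}/r\}\) for every \(r>0\), and taking the infimum finishes the proof. I expect the main obstacle to be the careful Grönwall/Bihari comparison for \(\alpha<1\). In particular, I must absorb the \(L_0\) term into the initial value, using \(\|s\|\le r\), so that exactly the stated form of \(\overline\nu_r\) comes out. The strict-positivity regularization \(+\varrho\) used earlier should be repeated to handle \(\alpha=0\) and zero initial data.
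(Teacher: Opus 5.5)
Your proposal is correct and follows the paper's proof. It uses the same endpoint bound $\max\{\|\nabla f(x_k)\|,\|\nabla f(x_k+s_k)\|\}\le g_{\max}$ from \eqref{eq:gradient-bound} (relying on $k\in\mathcal S$), the same split on whether $\|s_k\|$ exceeds $r$, and the same comparison argument with the $L_0$ term absorbed into the constant $g_{\max}+L_0r$, or a linear Gr\"onwall bound when $\alpha=1$. The only difference is that the paper cites the Bihari--LaSalle inequality (\Cref{lem:bihari-lasalle}), whereas you rerun the $\zeta^{1-\alpha}$ argument from the proof of \Cref{lem:decrease-L0-L1} by hand, which is equivalent.
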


\begin{proof}
  Let \(r > 0\) be fixed.
  Fix \(k \in \mathcal S\), and define \(\phi(t)\defeq\|\nabla f(x_k+ts_k)\|\) for \(t\in[0,1]\).
  By \Cref{lem:decrease-L0-L1}~\eqref{eq:gradient-bound}, we have \(\max\{\phi(0),\phi(1)\}\le g_{\max}\).
  By \((\alpha,L_0,L_1)\)-smoothness,
  \[
    \phi(t)
    \le
    \phi(0)
    +
    \int_0^t
    \bigl(L_0+L_1\phi(\tau)^\alpha\bigr)
    \|s_k\|\,d\tau.
  \]

  Assume first that \(\|s_k\|\le r\).
  For \(\alpha\in[0,1)\), we have
  \[
    \phi(t)
    \le
    g_{\max} + L_0r
    +
    \int_0^t L_1\|s_k\|\phi(\tau)^\alpha\,d\tau.
  \]
  Applying \Cref{lem:bihari-lasalle} with \(u=\phi\), \(c=g_{\max}+L_0r\), \(f(\tau)=L_1\|s_k\|\), and \(w(z)=z^\alpha\),
  \[
    \phi(t)
    \le
    \left[
      (g_{\max}+L_0r)^{1-\alpha}
      +(1-\alpha)L_1t\|s_k\|
      \right]^{\frac1{1-\alpha}}
    \le
    \overline\nu_r.
  \]
  For \(\alpha=1\) and \(L_1>0\), applying \Cref{lem:bihari-lasalle} with \(u=\phi\), \(c=\phi(0)\), \(f(\tau)=\|s_k\|\), and \(w(z)=L_0+L_1z\), gives
  \[
    \phi(t)
    \le
    \left(\phi(0)+\frac{L_0}{L_1}\right)
    e^{L_1t\|s_k\|}
    -
    \frac{L_0}{L_1}
    \le
    \overline\nu_r.
  \]
  If \(\alpha=1\) and \(L_1=0\), then directly \(\phi(t)\le\phi(0)+L_0t\|s_k\| \le g_{\max} + L_0r=\overline\nu_r\).

  Therefore,
  \[
    \|y_k\|
    \le
    \|s_k\|
    \sup_{t\in[0,1]}
    \bigl(L_0+L_1\phi(t)^\alpha\bigr)
    \le
    \bigl(L_0+L_1\overline\nu_r^\alpha\bigr)\|s_k\|.
  \]

  If \(\|s_k\|>r\), the endpoint bounds give \(\|y_k\|\le \|\nabla f(x_k+s_k)\|+\|\nabla f(x_k)\| \le 2 g_{\max}\).
  Combining the two cases proves the result for every fixed \(r>0\).
  Taking the infimum over \(r>0\) gives the desired result.
\end{proof}

\begin{lemma}[Bihari-LaSalle inequality \citep{bihari1956generalization}]%
  \label{lem:bihari-lasalle}
  Let \(T>0\), let \(u,f:[0,T]\to[0,\infty)\) be continuous, and let \(w:[0,\infty)\to[0,\infty)\) be continuous and nondecreasing, with \(w(r)>0\) for every \(r>0\).
  Assume that, for all \(t\in[0,T]\),
  \[
    u(t)\le c+\int_0^t f(s)w(u(s))\,ds,
  \]
  where \(c \ge 0\).
  Then
  \[
    u(t)\le G^{-1}\!\left(G(c)+\int_0^t f(s)\,ds \right), \qquad G(r)\defeq\int_{r_0}^r\frac{d\tau}{w(\tau)},
  \]
  for all \(t\in[0,T]\) for which the right-hand side is well-defined, where \(r_0>0\) is arbitrary.
\end{lemma}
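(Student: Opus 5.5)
The plan is the standard comparison argument for integral inequalities, run against the solution of the associated ODE. Set \(U(t)\defeq c+\int_0^t f(s)w(u(s))\,ds\), so \(u\le U\) on \([0,T]\). Since \(f\), \(u\), and \(w\) are continuous, \(U\) is \(\mathcal C^1\) with \(U'(t)=f(t)w(u(t))\le f(t)w(U(t))\); the inequality uses that \(w\) is nondecreasing and \(f\ge0\).

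Assume first that \(c>0\). Then \(U(t)\ge c>0\), so \(w(U(t))>0\), and \(G\) is well defined, \(\mathcal C^1\) and strictly increasing on \((0,\infty)\) with \(G'=1/w\). The chain rule gives
\[
\frac{d}{dt}G(U(t))=\frac{U'(t)}{w(U(t))}\le f(t).
\]
Integrating from \(0\) to \(t\) yields \(G(U(t))\le G(c)+\int_0^t f(s)\,ds\). Whenever the right-hand side lies in the range of \(G\), applying the increasing map \(G^{-1}\) gives
\[
u(t)\le U(t)\le G^{-1}\!\left(G(c)+\int_0^t f(s)\,ds\right).
\]
The choice of \(r_0\) only shifts \(G\) by a constant, so the expression \(G^{-1}(G(c)+\cdot)\) does not depend on \(r_0\).

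The case \(c=0\) is the delicate step, because \(G(0)\) may be \(-\infty\), or \(w(0)\) may vanish. The natural first attempt is to replace \(c\) by \(c+\varrho\) with \(\varrho>0\). The hypothesis still holds with \(c+\varrho\), so the previous argument gives the bound with \(c+\varrho\), and we then let \(\varrho\downarrow0\) using continuity and monotonicity of \(G\) and \(G^{-1}\). This is the same device used for \(\zeta_\varrho\) in the proof of \Cref{lem:decrease-L0-L1}. Whether \(G(c)\) is finite or \(-\infty\) is exactly what the clause ``for which the right-hand side is well-defined'' absorbs. For the uses in \Cref{lem:bounded-secant-slopes-L0-L1} with \(w(z)=z^\alpha\), one interprets \(G(0)\) as a limit, which is finite when \(\alpha<1\).

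The main obstacle is therefore only this bookkeeping at \(c=0\) and the domain of \(G^{-1}\). The inequality itself follows from the one-line differential comparison above.
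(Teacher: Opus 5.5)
The paper gives no proof of this lemma and only cites Bihari (1956). Your argument is exactly the classical proof from that reference, and it is correct: you compare \(u\) with \(U(t)=c+\int_0^t f(s)w(u(s))\,ds\), integrate \(\frac{d}{dt}G(U(t))\le f(t)\), and invert the increasing map \(G\). Your handling of \(c=0\) via \(c+\varrho\) and \(\varrho\downarrow0\) is also sound. The one detail to write out is that \(G(\varrho)\downarrow G(0)\), which is finite whenever \(w(0)>0\) or \(w(z)=z^\alpha\) with \(\alpha<1\). Consequently \(G(\varrho)+\int_0^t f\) stays in the range of \(G\) for small \(\varrho\), and continuity of \(G^{-1}\) passes the bound to the limit.
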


\begin{lemma}%
\label{lem:linking-alpha}
Let \(I\subseteq\R\) be an interval, \(a\in I\), \(I_-:=\{x\in I:x\le a\}\) and \(I_+:=\{x\in I:x\ge a\}\).
Let \(\alpha\in[0,1]\) and \(L_0,L_1\ge0\).
Suppose that \(f\in \mathcal C^1(I)\) is \((\alpha,L_0,L_1)\)-smooth on \(I_-\) and \(I_+\). Then \(f\) is \((\alpha,L_0,L_1)\)-smooth on \(I\).
\end{lemma}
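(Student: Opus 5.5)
We need to prove the linking lemma: $f\in\mathcal C^1(I)$ is $(\alpha,L_0,L_1)$-smooth on $I_-$ and on $I_+$, and we want it on all of $I$. The plan is to check the integral form \eqref{def-generalized-smooth-integral} for every pair $x,y\in I$. Only pairs straddling $a$ need work, since pairs inside $I_-$ or inside $I_+$ are covered by hypothesis.

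So assume, by symmetry, that $x\le a\le y$; the case $y\le a\le x$ is identical after swapping roles. The segment $[x,y]$ splits at $a$ into $[x,a]\subseteq I_-$ and $[a,y]\subseteq I_+$. Applying the hypothesis on each piece gives
\[
|f'(a)-f'(x)|\le \Bigl(L_0+L_1\int_0^1 |f'(x+t(a-x))|^\alpha\,dt\Bigr)(a-x)=L_0(a-x)+L_1\int_x^a|f'(u)|^\alpha\,du,
\]
where the equality is the change of variables $u=x+t(a-x)$ (this also covers $a=x$ trivially). The analogous bound holds on $[a,y]$. The triangle inequality then gives
\[
|f'(y)-f'(x)|\le L_0(y-x)+L_1\int_x^y|f'(u)|^\alpha\,du .
\]
Rewriting the right side back with $u=x+t(y-x)$ yields exactly $\bigl(L_0+L_1\int_0^1|f'(x+t(y-x))|^\alpha dt\bigr)|x-y|$, which is \eqref{def-generalized-smooth-integral}.

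The only subtle point is that the integral form is additive along the segment. This is precisely why we work with \Cref{def:smooth-integral} rather than the sup form \eqref{def-generalized-smooth}. In the sup form the two pieces would carry different suprema, and combining them would need care. In the integral form, both the integrand and the length are additive over the concatenation $[x,a]\cup[a,y]$, so no obstacle arises.

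Continuity of $f'$, which holds since $f\in\mathcal C^1(I)$, ensures the integrals are well-defined Riemann integrals. The convention $0^0=1$ for $\alpha=0$ causes no issue, because the integrand is then identically $1$.
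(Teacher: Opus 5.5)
Your proof is correct, but it takes a different route from the paper's. The paper uses the supremum form \eqref{def-generalized-smooth}. After the same triangle inequality through \(f'(a)\), it bounds \(\sup_{[x,a]}|f'|^\alpha\) and \(\sup_{[a,y]}|f'|^\alpha\) by \(\sup_{[x,y]}|f'|^\alpha\) and uses \(|a-x|+|y-a|=|y-x|\).

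Your remark that combining the two suprema ``would need care'' is therefore overstated. Combining them is a one-line domination, no harder than your additivity step.

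The real difference is which inequality each argument establishes. The paper's argument proves the sup form \eqref{def-generalized-smooth} on \(I\). To conclude smoothness in the sense of \Cref{def:smooth-integral}, it relies on the equivalence of the two forms stated after that definition. That equivalence is not automatic in the needed direction: passing from the sup form to the integral form requires a partition (upper Riemann sum) argument.

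Your argument stays inside \Cref{def:smooth-integral} throughout. Rewriting each piece as \(L_0|b-c|+L_1\int_c^b|f'(u)|^\alpha\,du\) makes the bound exactly additive over \([x,a]\cup[a,y]\). You therefore obtain the defining inequality directly, with no loss and no appeal to the equivalence. Conversely, the paper's version would work verbatim if the sup form were taken as the definition.
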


\begin{proof}
Consider \(x\in I_-\) and \(y\in I_+\).
By the triangle inequality and generalized smoothness on \(I_-\) and \(I_+\),
\[
\begin{aligned}
|f'(y)-f'(x)|
&\le |f'(y)-f'(a)|+|f'(a)-f'(x)|\\
&\le
(L_0+L_1 \sup_{u\in[a,y]}|f'(u)|^\alpha)|y-a|
+
(L_0+L_1 \sup_{u\in[x,a]}|f'(u)|^\alpha)|a-x|\\
& \le
(L_0+ L_1 \sup_{u\in[x,y]}|f'(u)|^\alpha)|y-x|.
\end{aligned}
\]
Hence, \(f\) is \((\alpha,L_0,L_1)\)-smooth on \(I\).
\end{proof}

\bibliographystyle{abbrvnat}
\bibliography{abbrv,l0l1}

\end{document}